\newcommand{\revised}[1]{{#1}}
\newcommand{\ignore}[1]{}
\newcommand\figref{Figure~\ref}
\newcommand\tabref{Table~\ref}
\newtheorem{example}{Example}[section]
\title{Chance-Constrained Combinatorial Optimization with a Probability Oracle and Its Application to Probabilistic Partial Set Covering
\thanks{	Simge K\"{u}\c{c}\"{u}kyavuz and Hao-Hsiang Wu have been supported, in part, by National Science Foundation Grants 1732364 and 1733001.}
}
\author{Hao-Hsiang Wu \footnotemark[2] \and S\.imge K\"u\c{c}\"ukyavuz \footnotemark[2] \footnotemark[3]}
\begin{document}

\maketitle

\renewcommand{\thefootnote}{\fnsymbol{footnote}}

\footnotetext[2]{Department of Industrial and Systems Engineering,
	University of Washington, Seattle, WA ({\tt hhwu2@uw.edu}, {\tt simge@uw.edu})}
\footnotetext[3]{Corresponding author.}

\renewcommand{\thefootnote}{\arabic{footnote}}

\begin{abstract}
 		We investigate a class of  chance-constrained combinatorial optimization problems.  Given a pre-specified risk level $\epsilon \in [0,1]$, the chance-constrained program aims to find the minimum cost selection of a vector of binary decisions  $x$
such that a desirable event $\mathcal{B}(x)$ occurs with probability at least 	
$ 1-\epsilon$. In this paper, we assume that we have an oracle that computes $\mathbb P( \mathcal{B}(x))$ exactly. Using this oracle, we propose a general exact method for solving the chance-constrained problem. 
In addition, we show that 
if the chance-constrained program is solved approximately by a sampling-based approach, then the oracle can be used as a tool for checking and fixing the feasibility of the  solution given by this approach. We demonstrate the effectiveness of our proposed methods on a probabilistic partial set covering problem (PPSC), which admits an efficient probability oracle. We give a compact mixed-integer program that  solves PPSC optimally (without sampling) for a special case.  For large-scale instances for which the exact methods exhibit slow convergence, we propose a sampling-based approach that exploits the special structure of PPSC. In particular, we introduce a  new class of facet-defining inequalities for a submodular substructure of PPSC, and show that a sampling-based algorithm coupled with the probability oracle provides high-quality feasible solutions to the large-scale test instances effectively.
\end{abstract}

\begin{keywords}
  chance constraints, stochastic programming, oracle; probabilistic  set covering, facets; submodularity
\end{keywords}

\pagestyle{myheadings}
\thispagestyle{plain}
\setstretch{1.06}

\section{Introduction}
	Chance-constrained programs (CCPs), first introduced in \cite{Charnes1958}, aim to find the optimal solution to a problem such that the probability of satisfying certain constraints is at least at a certain confidence level.	
In this paper, we consider chance-constrained {\it combinatorial} optimization problems.
Given a vector of $n$ binary decision variables $x \in \mathbb{B}^n$, we define $x_i=1$ if the $i^{th}$ component of $x$ is selected, $x_i=0$ otherwise. Let $\mathcal{B}(x)$ represent a random event of interest for a given $x$. Given a risk level $\epsilon \in [0,1]$, a chance-constrained program is 
\begin{linenomath}
\begin{equation}\label{model:ori_cc}
	\min\{b ^\top x : \mathbb P( \mathcal{B}(x))\ge 1-\epsilon, x \in \mathcal{X} \cap \mathbb{B}^n \},
\end{equation} 
\end{linenomath}
where $b \in \mathbb{R}^n$ is a given cost vector,  the set $\mathcal{X}$ represents the deterministic constraints on the variables $x$, and $\mathbb P( \mathcal{B}(x)) \ge 1-\epsilon $ represents the restriction that the probability of event $\mathcal{B}(x)$  must be at least $1-\epsilon$. There are three sources of difficulty for this class of problems. First,  for a given $x$, computing $\mathbb P( \mathcal{B}(x))$ exactly is hard in general, because it involves multi-dimensional integrals. Second, the feasible region of chance-constrained programs is non-convex for general probability distributions. Finally,  due to the combinatorial nature of the decisions, the search space is very large. 

In the CCP literature, the first challenge of evaluating the probability of an event, $\mathbb P( \mathcal{B}(x))$, is generally overcome by sampling from the true distribution \cite{Prekopa1990,Ruszczynski2002,Beraldi2010,Nemirovski2006,Luedtke2008}. This creates an approximation of the chance constraint, which can be evaluated for the given samples. 	In contrast, in this paper, we assume that there exists an efficient oracle, which provides an exact value of $\mathbb P( \mathcal{B}(x))$ for a given $x$.  We give a delayed cut generation algorithm to solve  problem \eqref{model:ori_cc} exactly using the true distribution, instead of sampling from the true distribution. We show that  probabilistic partial set covering problems (PPSC) under certain distributions admit an efficient probability oracle. Using this class of problems  in our computational study, we demonstrate the effectiveness of the proposed algorithm for small-size problems.  However, due to the exponential decision space, convergence may be slow for larger problems. We observe that the explicit (linear) formulation of a chance constraint using a sampling-based approach may be more amenable to exploiting the special structure of the underlying problem. Hence the sampling-based approach may be more effective in solving larger problems. 
However, the solutions to the sample approximation problem may not satisfy the chance constraint under the true distribution if not enough samples are used. On the other hand, the solution methods may be slow for large sample sizes. 
For such a sampling-based approach, we propose a method that utilizes the oracle  to check and correct the feasibility of the approximate solution \revised{by adding {\it globally valid} feasibility cuts to the sample approximation problem. Note that for structured problems, existing sampling-based approaches also add feasibility cuts exploiting the structure of the problem (see, e.g., \cite{Song2014,Minjiao,LK2018}). However, such cuts  for the sample approximation problem are valid based on the scenarios generated, but they may not be globally valid with respect to the original problem under the true distribution.)}

We handle the second difficulty (non-convexity of the feasible region) by expressing the feasibility condition using linear constraints. For the sample approximation problem, such a linear reformulation using additional binary variables is well known \cite{Luedtke2010}.  For the general case (without sampling), we consider a reformulation with exponentially many linear inequalities. We solve this formulation using a 
delayed cut generation algorithm, which starts with a subset of the inequalities, and  adds the violated  inequalities as needed to  cut off the infeasible solutions until a feasible and optimal solution is found. In addition, for a special case,  we  show that there exists a compact (polynomial-size) mixed integer linear program (MIP) that solves the problem without the need for sampling.    To handle the third difficulty, we show that we can use the properties of the oracle to obtain stronger inequalities to  represent the feasibility conditions, which, in turn, reduce the search space of problem \eqref{model:ori_cc} significantly.

\revised{Under certain conditions, if the finite dimensional distributions of the uncertain parameters are log-concave probability measures, then the continuous relaxation of the chance constraint is convex \cite{Prekopa1973}.}	van Ackooij et al.\ \cite{Ackooij2016} consider such convex 
chance-constrained combinatorial optimization problems, where the objective function is non-differentiable. The authors use a sampling (scenario)-based approach and introduce additional binary variables to represent whether the chance constraint is satisfied under each scenario.   They  propose a Benders decomposition algorithm using combinatorial Benders (no-good) cuts, where they use an inexact oracle to approximate the non-differentiable objective value. In contrast, we assume that the objective function is smooth (linear) and use an exact  oracle for evaluating the {\it non-convex} chance constraint.   In another line of work,  van Ackooij and Sagastiz\'{a}bal 
\cite{Ackooij2014} consider CCPs, where  the chance constraint  is convex but hard to evaluate exactly, and the additional  constraints on the  decision variables form a convex set (as a result, the decision variables are continuous). The authors  give a non-smooth optimization (bundle) method  that uses an inexact oracle to evaluate the chance constraint to find an approximate solution.  In contrast, in our problem \eqref{model:ori_cc}, we do not assume convexity of the chance constraint $\mathbb P( \mathcal{B}(x))\ge 1-\epsilon$, or the continuity of  the decision variables,  the binary restrictions on the decision variables form a non-convex set. 

\ignore{
In this paper, we investigate a general method for solving problem \eqref{model:ori_cc} exactly. We assume that there exists an efficient exact oracle, which computes the term $\mathbb P( \mathcal{B}(x))$ exactly for any $x$. An oracle can be a black box or a formulable function for a given $x$.  We introduce a  delayed constraint generation method for obtaining an exact optimal solution to problem \eqref{model:ori_cc}. In this approach,  we solve a relaxation of \eqref{model:ori_cc} and use an oracle to check the feasibility of the resulting solution $x$, and in case $x$ is infeasible, we add a feasibility cut that eliminates this solution. 
}

We demonstrate our proposed methods on a probabilistic partial set covering problem (PPSC) introduced in \cite{ZCAWZ14} for bipartite social networks.
Given a collection of $n$ subsets of $m$ items, a deterministic  set covering problem aims to choose subsets among the collection at a minimum cost, such that each item is covered by at least one chosen subset. In the probabilistic version of this problem we consider, it is assumed that when a subset is chosen, there is uncertainty in which items in the subset are actually covered.  Given a fixed target $\tau \le m$, the probabilistic {\it partial} set covering problem (PPSC) aims to find the minimum cost selection of subsets, which cover at least the target number of items, $\tau$, with  probability $1-\epsilon$. (Note that for $\tau=m$, this problem is equivalent to probabilistic set covering.) Under certain distributions of the random variables,  there exists a polynomial-time oracle to check the feasibility of a given selection of subsets. Using this oracle, we give an exact delayed cut generation algorithm to find the optimal solution. This is equivalent to solving an exponential-sized integer linear program, where an efficient separation algorithm is available. In addition, we show that for a special case of interest, the oracle is formulable and it can be incorporated into the optimization model, which results in a polynomial-sized mixed-integer linear program. While both of these approaches  find optimal solutions to moderate-size problems, the solution times grow exponentially as the problem size increases. In such cases, we develop a modified sampling-based method for PPSC that is able to exploit the special structure of the problem, namely the submodularity. We derive a new class of  valid inequalities  for PPSC that subsumes the submodular inequalities of  Nemhauser and Wolsey \cite{NW88}, and provide conditions under which the proposed inequalities are  facet defining. We observe that the modified sampling-based method is highly effective when combined with the probability oracle to obtain feasible solutions of good quality.	
The literature review on probabilistic  set covering problems and further discussions on alternative approaches are given in the corresponding section (Section  \ref{sec:problem}).

We summarize our contributions and give an outline of the paper as follows. In Section \ref{sec:oracle}, we introduce the concept of a probability oracle for a class of combinatorial CCPs and propose  a general method to solve such CCPs that uses the concept of no-good cuts. We  strengthen the no-good cuts by using the monotonicity of the probability function and the availability of the oracle.   In Section \ref{sec:problem}, we use a class of NP-hard problems (PPSC) to demonstrate the proposed method. In addition, we show that we can solve PPSC by using a compact deterministic MIP under a special case. Furthermore, we propose a modified sampling-based method for PPSC that utilizes its submodular substructure. We  introduce a new class of facet-defining inequalities for this substructure of PPSC.
In addition, we show that  an efficient oracle can be a useful tool for checking and correcting the  feasibility of a solution given by a sampling-based approach.  Furthermore, we propose a modified sampling-based method for PPSC that provides a high-quality feasible solution to the true problem. We  introduce a new class of facet-defining inequalities for the submodular substructure of PPSC that subsumes the known submodular inequalities. We show that we can solve the sample approximation problem of PPSC by using a compact deterministic MIP under a special case.  In Section \ref{sec:computation}, we report the computational results with these alternative approaches. \revised{ Finally, we give a summary of our results and future research directions in Section \ref{sec:con}.} 


		\section{Chance-Constrained Combinatorial Optimization  with a Probability Oracle}\label{sec:oracle} 

Suppose that we have an oracle $\mathcal{A}(x)$, which computes $\mathbb P( \mathcal{B}(x))$ exactly for a given $x$ in polynomial time. We reformulate problem \eqref{model:ori_cc} as
\begin{linenomath}
\begin{equation}\label{model:ora_cc}
\min\{b ^\top x : \mathcal{A}(x)\ge 1-\epsilon, x \in \mathcal{X} \cap \mathbb{B}^n \}.
\end{equation}
\end{linenomath}
In general,  it is hard to compute $\mathcal{A}(x)$, it involves high dimensional integrals, or in some cases, it is a black box evaluated by  simulation methods. In addition,   constraint $\mathcal{A}(x)\ge 1-\epsilon$ is highly non-convex, in general. In this section, we propose a general delayed cut generation approach to solve formulation \eqref{model:ora_cc} when an exact oracle for $\mathcal{A}(x)$ exists.


Here we address a general  approach to solve formulation \eqref{model:ora_cc} exactly. The algorithm works by solving a relaxed problem, and cutting off infeasible solutions iteratively until we find an optimal solution. Consider the generic relaxed master problem (RMP) of formulation \eqref{model:ora_cc} as
\begin{linenomath}
\begin{equation}\label{model:M_ora_cc}
\min\{b ^\top x : x \in \mathcal{C}\cap \mathcal{X} \cap \mathbb{B}^n \},
\end{equation}
\end{linenomath}
where $\mathcal{C}$ is a set of feasibility cuts added until the current iteration.  We describe a delayed constraint generation approach with the probability oracle in Algorithm \ref{alg:GDCG}. To solve formulation \eqref{model:ora_cc}, Algorithm \ref{alg:GDCG} starts with a subset of feasibility cuts in $\mathcal C$ (could be empty) in RMP \eqref{model:M_ora_cc}. At each iteration (Lines \ref{alg:1}-\ref{alg:5}), solving RMP \eqref{model:M_ora_cc}  provides an incumbent solution $\bar x$ (Line \ref{alg:2}). Then the oracle $\mathcal{A}(\bar x)$ is used as a separation routine  to check the feasibility of $\bar x$. Note that $\mathcal{A}(\bar x) \ge 1-\epsilon $ in Line \ref{alg:3} is the feasibility condition of $\bar x$.  If $\bar x$ is feasible, then we break the loop and declare the optimal solution as $\bar x$  (Lines \ref{alg:3.5} and \ref{alg:6}); otherwise, 
a subroutine  FeasibilityCut($\bar x,\kappa,\mathcal C$) is called with  input $\bar x$ and optional parameters $\kappa$.  The subroutine adds 
a feasibility cut  to the current set $\mathcal C$ (Line \ref{alg:4}) to cut off $\bar x$ in  further iterations. We  specify this subroutine and the corresponding cuts next.

\begin{algorithm}\label{alg:GDCG}
	\SetAlgoLined
	Start with an initial set of  feasibility cuts in $\mathcal C$  (could be empty)\;
	\While{$True$}
	{  \label{alg:1}
		Solve master problem \eqref{model:M_ora_cc}, and obtain an incumbent solution $\bar x$  \label{alg:2}\;
		
		\If{$\mathcal{A}(\bar x) \ge 1-\epsilon $} 
		{ 	\label{alg:3}
			\bf{break}\;
			\label{alg:3.5}}
		\Else
		{
			Call  FeasibilityCut($\bar x, \kappa,\mathcal C$) \label{alg:4}\;
		}
		\label{alg:5}		}
	Output $\bar x$ as an optimal solution. \label{alg:6}	
	\caption{An Exact Delayed Constraint Generation Algorithm with a Probability Oracle}
\end{algorithm}

Let $V_1:=\{1,\dots,n\}$. Given an incumbent solution $\bar x$ such that $\mathcal{A}(\bar x) < 1-\epsilon $, let $ J_1 = \{ i \in  V_1|\bar x_i = 1\} $ and $ J_0 = \{ j \in V_1|\bar x_j = 0 \} $. A class of feasibility cuts, commonly known as no-good cuts, is given by
\begin{linenomath}
\begin{equation}
	\label{eq:LLcut}
	\sum_{i \in J_1}(1-x_i)+\sum_{j \in J_0}x_j \ge 1,
\end{equation}   
\end{linenomath} 
which ensures that if $\bar x$ is infeasible, then at least one component in $\bar x$ must be changed.  Laporte and Louveaux \cite{Laporte1993} provide a  review of inequality \eqref{eq:LLcut}  for two-stage stochastic programs where the  first-stage problem is pure binary and  second-stage problem is mixed-integer. 


In the next proposition, we observe that if $\mathbb{P}( \mathcal{B}(x))$ is monotonically increasing in $x$ for problem \eqref{model:ori_cc}, then a stronger inequality is valid for formulation \eqref{model:ora_cc}. Throughout, we let $\mathbf {e}_{j}$ be a unit vector of dimension $n$ whose $j$th component is 1.	

\ignore{

\begin{proposition}\label{prop:valid_LLcut_strong}
	Suppose that $\mathbb{P}( \mathcal{B}(x))$ is a monotonically increasing function in $x$.  Given an incumbent solution $\bar x$ with $P( \mathcal{B}(\bar x)) < 1-\epsilon$, a corresponding set $J_0$, and a valid $\kappa(J_0)$ in inequality \eqref{eq:LLcut},
	\begin{itemize}
		\item[(i)]  inequality
		\begin{linenomath}
		\begin{equation}
			\label{eq:LLcut_strong}
			\sum_{j \in J_0}x_j \ge \kappa(J_0),
		\end{equation}
		\end{linenomath} 
		is valid for formulation \eqref{model:ora_cc}. 
		\item[(ii)]  inequality \eqref{eq:LLcut_strong} is stronger than inequality \eqref{eq:LLcut}.
	\end{itemize}
\end{proposition}    
\begin{proof}
	\begin{itemize}
		\item[(i)] Given $\bar x$, such that $\mathbb P( \mathcal{B}(\bar x)) < 1-\epsilon$, $ J_1 = \{ i \in  V_1|\bar x_i = 1\} $ and $ J_0 = \{ j \in V_1|\bar x_j = 0 \}$.
		Let $\bar x'$ denote another incumbent solution, where $\bar J_1' = \{i \in V_1 | \bar x_i' = 1\} $ and $\bar J_1' \subseteq J_1$. Since $\mathbb{P}( \mathcal{B}(x))$ is a monotonically increasing function, $\bar J_1' \subseteq J_1$ implies that $\mathbb{P}(\mathcal{B}(\bar x')) \le \mathbb{P}(\mathcal{B}(\bar x)) < 1-\epsilon $, so $\bar x'$ is also infeasible for formulation \eqref{model:ora_cc}. Hence, for a feasible solution  $x'$, with $J_1' = \{i \in V_1| x_i' = 1 \} $ and $\mathbb{P}( \mathcal{B}(x')) \ge 1-\epsilon$, we must have $ J_1'\setminus J_1 \not = \emptyset$. Note that if there exists $j \in J_1'\setminus J_1$, then $j \in J_0$, which shows that inequality \eqref{eq:LLcut_strong} is valid.
		\item[(ii)]  Note that for the same choice of $J_0$, we have
		$\sum_{i \in J_1}(1-x_i)+\sum_{j \in J_0}x_j \ge \sum_{j \in J_0}x_j \ge \kappa(J_0) \ge 1,$
		where the first inequality follows because  $\sum_{i \in J_1}(1-x_i)\ge 0$. The result then follows. 
	\end{itemize}
\end{proof}

}

\begin{proposition}\label{prop:valid_LLcut_strong}
	Suppose that $\mathbb{P}( \mathcal{B}(x))$ is a monotonically increasing function in $x$.  
	Given a vector $\bar x$ with $J_0=\{i\in V_1: \bar x_i=0\}$ and $J_1=V_1\setminus J_0$ and $\mathbb P( \mathcal{B}(\bar x)) < 1-\epsilon$, let  $\kappa(J_0)<|J_0|$ be a positive integer such that $\forall \mathcal{K} \subseteq J_0$ with $|\mathcal{K}| = \kappa(J_0)-1$, we have $\mathbb P( \mathcal{B}(\bar x+\sum_{j \in \mathcal{K}  }\mathbf {e}_{j})) < 1-\epsilon$. 
	\begin{itemize}
		\item[(i)] The inequality
		\begin{equation}
			\label{eq:LLcut_strong}
			\sum_{j \in J_0}x_j \ge \kappa(J_0),
		\end{equation} 
		is valid for formulation \eqref{model:ora_cc}. 
		\item[(ii)] Inequality \eqref{eq:LLcut_strong} is stronger than inequality \eqref{eq:LLcut} for the same choice of $J_0$.
	\end{itemize}
\end{proposition}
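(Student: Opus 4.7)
My plan is to argue part (i) by contradiction, using monotonicity to reduce to the hypothesis on subsets $\mathcal{K}\subseteq J_0$ of size $\kappa(J_0)-1$. Part (ii) will follow from a direct inequality chain.

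For (i), suppose some $x^*\in \mathcal{X}\cap\mathbb{B}^n$ is feasible for \eqref{model:ora_cc}, so $\mathbb{P}(\mathcal{B}(x^*))\ge 1-\epsilon$, yet $\sum_{j\in J_0} x^*_j \le \kappa(J_0)-1$. Define $\mathcal{K}:=\{j\in J_0: x^*_j=1\}\subseteq J_0$, so $|\mathcal{K}|\le \kappa(J_0)-1$. The key construction is the auxiliary point
\begin{equation*}
y \;:=\; \bar x + \sum_{j\in \mathcal{K}} \mathbf{e}_j,
\end{equation*}
i.e., set $y_i=1$ for $i\in J_1$ (matching $\bar x$) and $y_j=x^*_j$ for $j\in J_0$. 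By construction $y\ge x^*$ componentwise, since $y_i=1\ge x^*_i$ for $i\in J_1$ and $y_j=x^*_j$ for $j\in J_0$. Since $|\mathcal{K}|\le \kappa(J_0)-1 < |J_0|$, I can extend $\mathcal{K}$ to some $\mathcal{K}'\subseteq J_0$ with $|\mathcal{K}'|=\kappa(J_0)-1$, and let $y' := \bar x + \sum_{j\in \mathcal{K}'} \mathbf{e}_j \ge y \ge x^*$. By hypothesis on $\kappa(J_0)$ we have $\mathbb{P}(\mathcal{B}(y')) < 1-\epsilon$, and then by monotonicity $\mathbb{P}(\mathcal{B}(x^*))\le \mathbb{P}(\mathcal{B}(y'))<1-\epsilon$, contradicting feasibility of $x^*$. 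Hence every feasible $x$ satisfies \eqref{eq:LLcut_strong}.

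For (ii), since $\kappa(J_0)\ge 1$ is a positive integer and $1-x_i\ge 0$ for each $i\in J_1$,
\begin{equation*}
\sum_{i\in J_1}(1-x_i) + \sum_{j\in J_0} x_j \;\ge\; \sum_{j\in J_0} x_j \;\ge\; \kappa(J_0) \;\ge\; 1,
\end{equation*}
so any $x\in\mathbb{B}^n$ satisfying \eqref{eq:LLcut_strong} also satisfies \eqref{eq:LLcut}; moreover the inequality is strict whenever $\kappa(J_0)>1$ or $\bar x$ has support, so \eqref{eq:LLcut_strong} dominates \eqref{eq:LLcut}.

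The only subtle point I would flag is the extension step from $|\mathcal{K}|\le \kappa(J_0)-1$ to $|\mathcal{K}'|=\kappa(J_0)-1$: this is why the hypothesis needs $\kappa(J_0)<|J_0|$, which guarantees room in $J_0$ to enlarge $\mathcal{K}$. Once that monotone lifting is in place, everything else is essentially a comparison of coordinates. The main conceptual obstacle is simply recognizing that monotonicity of $\mathbb{P}(\mathcal{B}(\cdot))$ lets us replace the fractional pattern of $x^*$ on $J_1$ (which could disagree with $\bar x$) by the all-ones pattern $\bar x$ on $J_1$, while only \emph{increasing} the probability — this is what makes the cut depend only on $J_0$ and not on $J_1$.
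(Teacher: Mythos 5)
Your proof is correct and follows essentially the same route as the paper: part (i) combines monotonicity of $\mathbb{P}(\mathcal{B}(\cdot))$ with the hypothesis on subsets $\mathcal{K}\subseteq J_0$ of size $\kappa(J_0)-1$, and part (ii) is the identical chain $\sum_{i\in J_1}(1-x_i)+\sum_{j\in J_0}x_j\ge\sum_{j\in J_0}x_j\ge\kappa(J_0)\ge 1$. If anything, your explicit construction of the dominating point $y'=\bar x+\sum_{j\in\mathcal{K}'}\mathbf{e}_j\ge x^*$ handles an arbitrary feasible $x^*$ (whose pattern on $J_1$ need not be all ones) more carefully than the paper, which restricts attention to solutions with $J_1'\supseteq J_1$ and leaves the reduction to that case implicit.
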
    
\begin{proof}
	\begin{itemize}
		\item[(i)] 
		Let $\bar x'\ne \bar x$ denote another vector, where $\bar J_1' = \{i \in V_1 | \bar x_i' = 1\} $ and $\bar J_1' \subset J_1$. Since $\mathbb{P}( \mathcal{B}(x))$ is a monotonically increasing function, $\bar J_1' \subset J_1$ implies that $\mathbb{P}(\mathcal{B}(\bar x')) \le \mathbb{P}(\mathcal{B}(\bar x)) < 1-\epsilon $, so $\bar x'$ is also infeasible for formulation \eqref{model:ora_cc}. Recall that for all $\mathcal{K} \subseteq J_0$, where $|\mathcal{K}| \le \kappa(J_0)-1$, we have $P( \mathcal{B}(\bar x+\sum_{j \in \mathcal{K}  }\mathbf {e}_{j})) < 1-\epsilon$. Then, for a feasible solution  $x'$, with $J_1' = \{i \in V_1| x_i' = 1 \}\revised{\supseteq J_1}$ and $\mathbb{P}( \mathcal{B}(x')) \ge 1-\epsilon$, we must have $| J_1'\setminus J_1 | \ge \kappa(J_0)$. Note that $J_1'\setminus J_1\subseteq J_0$. Hence, $\sum_{j \in J_0}x_j \ge \sum_{j \in J_1'\setminus J_1}x_j  \ge  \kappa(J_0)$, which proves the claim. 
			\item[(ii)]  Note that for the same choice of $J_0$, we have
		$\sum_{i \in J_1}(1-x_i)+\sum_{j \in J_0}x_j \ge \sum_{j \in J_0}x_j \ge \kappa(J_0) \ge 1,$
		where the first inequality follows because  $\sum_{i \in J_1}(1-x_i)\ge 0$. The result then follows. 
	\end{itemize}
\end{proof}

In light of Proposition \ref{prop:valid_LLcut_strong}, we specify the subroutine FeasibilityCut($\bar x,\kappa,\mathcal C$)   in Algorithm \ref{alg:nos_PSC} for the case that $\mathcal{A}(\bar x)$ is monotone. If  $\mathcal{A}(\bar x)$ is not monotone, then inequality \eqref{eq:LLcut_strong}  should be replaced with inequality \eqref{eq:LLcut} in Algorithm \ref{alg:nos_PSC}.  In this subroutine,  we seek inequalities \eqref{eq:LLcut_strong} with $\kappa(J_0)\le \kappa$, given the input parameter $\kappa$. If $\kappa=2$, then we check if there exists some $j\in J_0$ for which $\mathcal{A}(\bar x + \mathbf {e}_{j}) \ge 1-\epsilon$. In other words, we check if there exists a feasible solution after letting $x_j=1$ for some $j\in J_0$. If so, we let $\kappa(J_0)=1$  for the inequality to be valid.  If  such a $j$ does not exist, then this implies that complementing one variable that is in $J_0$ is not sufficient to obtain a feasible solution. In this case,  we  let $\kappa(J_0)=2$ in inequality  \eqref{eq:LLcut_strong}. Note that higher values of $\kappa$ than 2 will require more computational effort, so we only consider $\kappa=2$ \revised{in} Algorithm \ref{alg:nos_PSC}.

\begin{algorithm}[htb]\label{alg:nos_PSC}
	\SetAlgoLined
	{  \label{alg3:7}
\		\For{$j \in J_0$} 
		{\label{alg3:9}
			\If{$\mathcal{A}(\bar x + \mathbf {e}_{j}) \ge 1-\epsilon$} 
			{\label{alg3:10}
				Add inequality \eqref{eq:LLcut_strong} with $\kappa(J_0) = 1$ to $\mathcal C$\; \label{alg3:11}    
				$BoundIncrease = 0$\;
				\bf{break}\; 
			}
		}
		\If{$\kappa=2$}
		{
			Add inequality \eqref{eq:LLcut_strong} with $\kappa(J_0) = 2$ to $\mathcal C$\label{alg3:17}\;    
		}  		
	} \label{alg3:19}
	\caption{Subroutine FeasibilityCut($\bar x,\kappa,\mathcal C$)}
\end{algorithm}

Next, we demonstrate the proposed algorithm on a class of probabilistic set covering problems, and provide alternative solution approaches utilizing an efficient probability oracle.

	\section{An Application: A Probabilistic Partial Set Covering Problem}\label{sec:problem}
In this section, we study  a probabilistic partial set covering problem (PPSC) as an application of problem \eqref{model:ori_cc}.  First, we describe the deterministic  set covering problem. The deterministic set covering problem is a fundamental combinatorial optimization problem that arises in many applications, such as facility selection, scheduling, and manufacturing. We refer the reader to  \cite{Balas1983} for a review of the various applications of the set covering problem. For example, in the facility selection problem, there are  $n$ facilities given by the set $V_1$ and $m$ customers given by the set $V_2$. Suppose that  facility $j$ covers (satisfies the demand of)  customer $i$  if the travel time between the facility and the customer is within a pre-specified time limit. In this case, we can form a set $S_j$ as those customers who are within the acceptable time limit away from facility $j$.    
Given the cost of building  facility $j$, $b_j$, the set covering problem aims to find the minimum cost selection of facilities that cover all customers. 

More formally, given a set of items $V_2:=\{1,\dots,m\}$ and a collection of $n$ subsets $S_j\subseteq V_2, j\in V_1:=\{1,\dots,n\}$ such that $\cup_{j=1}^n S_j=V_2$,  the deterministic set covering problem is defined as
\begin{subequations}\label{eq:DSC}
	\begin{linenomath} 
	\begin{align}
		\min~~& \sum_{j\in V_1} b_j x_j \label{eq:DSC1}\\
		\text{s.t.}~~&\sum_{j\in V_1}  t_{ij} x_j \ge h_i & \forall i \in V_2 \label{eq:DSC2} \\
		& x\in\mathbb{B}^{n},\label{eq:DSC3}
	\end{align}
\end{linenomath} 
\end{subequations}   
where $b_j$ is the objective coefficient of $x_j$, $h_i=1$ for all $i\in V_2$, and $ t_{ij}=1$ if $i \in S_j$; otherwise, $ t_{ij} = 0$ for all $i\in V_2\setminus S_j, j\in V_1$.   Karp \cite{Karp1972} proves that the set covering problem is NP-hard.

Probabilistic set covering extends this  problem to the probabilistic setting to capture uncertain  travel times. In the stochastic variant of the set covering problem we consider, the chance constraint ensures  a high quality of service as measured by serving a target number $\tau\le m$ of the customers within preferred time limits with high probability.     
Different variants of the probabilistic set covering problem have been considered in the literature, wherein constraint  \eqref{eq:DSC2} is replaced with a chance constraint  when either the constraint coefficients $t_{ij}$ or the right-hand side $h_i$ is assumed to be random for $ i \in V_2, j \in V_1$.     
 Beraldi and Ruszczy\'{n}ski \cite{Beraldi2002b} and  Saxena et al.\ \cite{Saxena2010} study the uncertainty in the right-hand side of constraint \eqref{eq:DSC2}, in other words $h_i$ is assumed to be a binary random variable.  Fischetti and Monaci \cite{Fischetti2012} and  Ahmed and Papageorgiou \cite{Ahmed2013} study the uncertainty with the randomness in the coefficients  of constraint \eqref{eq:DSC2}, i.e., $ t_{ij}$ is a binary random variable indicating whether  set $j$ covers  item $i$. They consider {\it individual} chance constraints that ensure that each item is covered with a certain probability.    In this paper, we focus on the uncertainty in $ t_{ij}$ for all $i \in V_2$ and $j \in V_1$. In addition, we study a version of PPSC such that the probability that the selected subsets  cover a given number $\tau$ of items in $V_2$ is at least $1-\epsilon$, which we describe next. (Note that when $\tau=m$, our model considers the {\it joint} probability of covering {\it all} customers.)

Let $\sigma(x)$ be a random variable representing the number of covered items in $V_2$ for a given $x$. \revised{For example, in the facility selection problem with random travel times, $\sigma(x)$ represents the number of customers for whom the travel time from a selected facility $j\in V_1$ (with $x_j=1$) is within the pre-specified time limit.} Suppose that we are given the cost $b_i$ of each set $i \in V_1$, a target $\tau $ of the number of covered items in $V_2$, and a risk level $\epsilon\in [0,1]$. The variant of the probabilistic   set covering model we consider is
\begin{subequations}\label{eq:PPSC}
	\begin{linenomath}
	\begin{align}
		\min~~& \sum_{i\in V_1} b_i x_i \label{eq:PSC1}\\
		\text{s.t.}~~&\mathbb P( \sigma(x)\ge \tau )\ge 1-\epsilon \label{eq:PSC2} \\
		& x\in\mathbb{B}^{n},\label{eq:PSC3}
	\end{align}
	\end{linenomath}
\end{subequations}
where $\sigma(x)\ge \tau $ is the desired covering event, $\mathcal B(x)$, for a given $x$. \revised{For $\tau=m$, this problem is equivalent to a probabilistic set covering model, where each node must be covered. However, because we allow $\tau\le m$, we refer to this model as probabilistic partial set covering.} Note that $\sigma(x)$ is a submodular function \cite{KKT03}. Our goal is  to minimize the total cost of the sets selected from $V_1$ while guaranteeing a certain degree of coverage of the items in $V_2$.

We  represent the partial set covering problem on a bipartite graph $G=(V_1 \cup V_2, E)$. There are two groups of nodes $V_1$ and $V_2$ in $G$, where all arcs in $E$ are from $V_1$ to $V_2$. Node $i \in V_1$ represents set $S_i$ and  nodes in  $j \in V_2$ represent the items. There exists an arc $(i,j) \in E$ representing the covering relationship if  $j \in S_i$ for $i \in V_1$. In probabilistic   set covering, the covering relationship is stochastic, in other words  an item $i$  may not be covered by the subset $S_j$, $j\in V_1$, even though $i\in S_j$. 
In this paper, we consider probabilistic partial set covering problems (PPSC) under two probability distributions:

\noindent 	{\bf Probabilistic Partial Set Covering with  Independent Probability Coverage:}
	In this model,  each node $j$ has an independent probability $a_{ij}$ of being covered by  node $i$ for  $j\in S_i$. 
	
\noindent    {\bf Probabilistic Partial Set Covering with Linear Thresholds:}
	In the linear threshold model of  Kempe et al.\ \cite{KKT03}, each arc $(i,j)\in E$ has a deterministic weight $0\le a_{ij}\le 1$, such that for all nodes $j\in V_2$, $\sum_{i:(i,j)\in E}a_{ij}\le 1$. In addition, each node $j\in V_2$ selects a threshold $\nu_j \in [0,1]$ {\it uniformly at random}. A node $j \in V_2$ is covered if sum of the weights of its selected neighbors $i \in V_1$ is above its threshold, i.e., $\sum_{i:(i,j)\in E} a_{ij}x_i\ge \nu_j$.

The probabilistic  models we consider may be seen as chance-constrained extensions of  the independent cascade and  linear threshold models in social networks  proposed by  Kempe et al.\ \cite{KKT03},   applied to bipartite graphs.  
 Zhang et al.\ \cite{ZCAWZ14} first proposed a model to find the minimum number of individuals to influence a target number of people in social networks with a probability guarantee, where one individual has an independent probability of influencing another individual. Note that if the social network is a bipartite graph, the question proposed by Zhang et al.\ \cite{ZCAWZ14} can be formulated as PPSC.    
 Zhang et al.\ \cite{ZCAWZ14} describe a polynomial time algorithm to compute $\mathbb P( \sigma(x)\ge \tau )$ exactly for bipartite graphs under certain probability distributions for a given $x$. They  propose a greedy heuristic to obtain a solution to PPSC, which has  an $O(m+n)$ multiplicative error and an $O(\sqrt{ m+n})$ additive error on the quality of the solution for the case that $b_i=1$ for all $i\in V_1$, and no performance guarantee on the quality of the solution for the general cost case. In contrast, we give an exact algorithm to find an optimal $x$. Next, we review an efficient oracle for PPSC under the distributions of interest for both versions of PPSC.

\subsection{An Oracle} \label{sec:dp}
Let $P(x,i)$ be the probability that a given  solution $x$ covers node $i\in V_2$. For the linear threshold model, $P(x,i) = \sum_{j \in V_1} a_{j,i}x_j$, and  for the independent probability coverage model, $P(x,i) = 1- \prod_{j \in V_1} (1-a_{j,i}x_j) $. For a given $x$, the probability of covering exactly $k$  nodes in $V_2$ out of a total of $|V_2|=m$ is represented as $\mathbb P( \sigma(x) = k)$, and $\mathbb P( \sigma(x)\ge \tau ) = \sum_{k = \tau }^{m} \mathbb P( \sigma(x) = k)$. Note that $\mathbb P( \sigma(x) = k)$ is equal to the probability mass function of the Poisson binomial distribution \cite{Hoeffding1956,Samuels1965,Wang1993}, which is the discrete probability distribution of $k$ successes in $m$ Bernoulli trials, where each Bernoulli trial  has a unique success probability.   Let $\mathbb P_i( \sigma(x) = \revised{j})$ denote the probability of having \revised{$j$} covered nodes in $V_2\setminus\{i\}$  for a given $x$ \revised{for any $j=0,\ldots,k$}. 
Samuels \cite{Samuels1965} provides a formula  to obtain the value of probability mass function of the Poisson binomial distribution:
\begin{linenomath} 
\begin{equation}
	\label{eq:recursive}
	\mathbb P( \sigma(x) = \revised{j}) =  P(x,i) \times \mathbb P_i( \sigma(x) =  \revised{j}-1) + (1-P(x,i)) \times \mathbb P_i( \sigma(x) = \revised{j}).
\end{equation}
\end{linenomath} 
Next, we describe a dynamic program (DP) to compute $\mathbb P( \sigma(x)\ge \tau )$ exactly \cite{Barlow1984,ZCAWZ14}.  Let $V^i = \{1,\dots, i\} \in V_2$ be the set of the first $i$ nodes of $V_2$. Also let $A(x,i,j)$ represent the probability that the selection $x$ covers $j$ nodes among $V^i$ for $0\le j\le i, i\in V_2$.  The DP recursion for  $A(x,i,j)$ for $1\le j\le i, i\in V_2$ is formulated as
\ignore{
$$ A(x,1,j)=\left\{
\begin{aligned}
&P(x,1), &  j=1 \\
&1-P(x,1), & j=0 \\
\end{aligned}
\right.
$$ 
and for $i>1$, 
}
\begin{linenomath} 
$$ A(x,i,j)=\left\{
\begin{aligned}
&A(x,i-1,j) \times (1-P(x,i)), &  j=0 \\
&A(x,i-1,j) \times (1-P(x,i)) + A(x,i-1,j-1) \times P(x,i), & 0<j<i  \\
&A(x,i-1,j-1)\times P(x,i), & j=i, \\
\end{aligned}
\right.
$$
\end{linenomath} 
where the boundary condition is $A(x,0,0) = 1$. The goal function $\sum_{j= \tau }^{m} A(x,m,j)$ calculates the probability that the number of covered nodes is at least the target $\tau $ for a given $x$. In other words, $\mathcal{A}(x) =\mathbb P( \sigma(x)\ge \tau ) =   \sum_{j= \tau }^{m} A(x,m,j)$. For a given $x$, the running time of this DP is $\mathcal{O}(nm+m^2)$, because obtaining $P(x,j)$ for all $j \in V_2$ is  $\mathcal{O}(nm)$, and computing the recursion is $\mathcal{O}(m^2)$. 

Next, we show that $ \mathcal{A}(x)$ is a monotone increasing function in $x$. We use the following lemma as a tool to prove the property of $\mathbb P( \sigma(x)\ge \tau )$. 

\begin{lemma}\label{lemma:coupling} [Lemma 2.12 in \cite{Hofstad2016}]
	Let $R_1$ and $R_2$ be two random variables defined on two different probability spaces. The random variables $\hat R_1$ and $\hat R_2$ are a coupling of $R_1$ and $R_2$ when $\hat R_1$ and $\hat R_2$ are defined in the same probability space. In addition, the marginal distribution of $\hat R_1$ is the same as $R_1$, and the marginal distribution of $\hat R_2$ is the same as $R_2$. Given $\gamma \in \mathbb{R}$, $\mathbb{P}(R_1 \ge \gamma) \le \mathbb{P}(R_2 \ge \gamma)$ if and only if there exists a coupling $\hat R_1$ and $\hat R_2$ of $R_1$ and $R_2$ such that $\mathbb{P}(\hat R_1 \le \hat R_2) = 1$.
\end{lemma}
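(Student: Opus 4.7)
The plan is to establish the equivalence by proving both directions, interpreting the hypothesis as stochastic dominance (i.e., $\mathbb{P}(R_1 \ge \gamma) \le \mathbb{P}(R_2 \ge \gamma)$ for all $\gamma \in \mathbb{R}$), since the reverse implication requires this uniformity in $\gamma$.

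The ($\Leftarrow$) direction is essentially immediate. If $(\hat R_1, \hat R_2)$ is a coupling of $(R_1, R_2)$ with $\mathbb{P}(\hat R_1 \le \hat R_2) = 1$, then for every $\gamma$ the event $\{\hat R_1 \ge \gamma\}$ is almost surely contained in $\{\hat R_2 \ge \gamma\}$, so $\mathbb{P}(\hat R_1 \ge \gamma) \le \mathbb{P}(\hat R_2 \ge \gamma)$. The marginal conditions on the coupling transfer this inequality to $\mathbb{P}(R_1 \ge \gamma) \le \mathbb{P}(R_2 \ge \gamma)$.

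For the ($\Rightarrow$) direction, I would construct an explicit monotone coupling via the quantile transform. Let $F_i(t) := \mathbb{P}(R_i \le t)$ denote the CDF of $R_i$, and let $F_i^{-1}(u) := \inf\{t : F_i(t) \ge u\}$ be its generalized inverse. On a single probability space, introduce a random variable $U$ uniformly distributed on $(0,1)$ and define $\hat R_i := F_i^{-1}(U)$ for $i=1,2$. The standard identity $\{F_i^{-1}(U) \le t\} = \{U \le F_i(t)\}$ shows that $\hat R_i$ has CDF $F_i$, so the marginal requirements are met. The hypothesis $\mathbb{P}(R_1 \ge \gamma) \le \mathbb{P}(R_2 \ge \gamma)$ for all $\gamma$ is equivalent to $F_2 \le F_1$ pointwise, and monotonicity of the generalized inverse of non-decreasing functions then gives $F_1^{-1}(u) \le F_2^{-1}(u)$ for every $u \in (0,1)$. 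Hence $\hat R_1 \le \hat R_2$ with probability one, yielding the desired monotone coupling.

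The main obstacle is the measure-theoretic bookkeeping around discontinuity and flat points of the CDFs: rigorously verifying that $F_i^{-1}(U)$ has distribution $F_i$, and that $F_2 \le F_1$ pointwise implies $F_1^{-1} \le F_2^{-1}$ pointwise. Both are standard facts about right-continuous non-decreasing functions and their generalized inverses, so the proof is conceptually clean once these technical lemmas are in hand. A side remark worth making in the write-up is that, although the statement is phrased as if $\gamma$ were a single fixed real, the equivalence is genuine only when the inequality holds for all $\gamma$; under a single-$\gamma$ reading the reverse implication fails in general, so the quantifier should be taken as universal.
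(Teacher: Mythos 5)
Your proof is correct. Note, though, that the paper does not prove this lemma at all---it is imported verbatim as Lemma 2.12 of \cite{Hofstad2016}---so there is no in-paper argument to compare against; what you have supplied is the standard proof. Your two directions are both sound: the ($\Leftarrow$) implication is the trivial one, and your ($\Rightarrow$) construction via the quantile transform $\hat R_i=F_i^{-1}(U)$ is exactly the classical monotone coupling, and indeed the same device (a shared uniform variable thresholded against the two distribution functions) that the authors use concretely in the proof of Proposition \ref{prop:increasingP}, citing \cite{Pollard2001}. Your side remark is also well taken: as literally written, with a single fixed $\gamma$, the ``only if'' direction of the lemma is false, and the intended reading is that the inequality holds for all $\gamma\in\mathbb{R}$ (equivalently $F_2\le F_1$ pointwise); that is the form in which the lemma is actually invoked in Proposition \ref{prop:increasingP}, where the coupling is used to conclude the inequality for the particular threshold $\tau$.
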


\begin{proposition}\label{prop:increasingP}
	Given $\tau $, $\mathbb P( \sigma(x)\ge \tau )$ is a monotonically increasing function in $x$ for the probabilistic partial set covering problem under the independent probability coverage  and the linear threshold models.
\end{proposition}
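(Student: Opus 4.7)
The plan is to prove monotonicity by a coupling argument that directly invokes the cited Lemma~\ref{lemma:coupling}. It suffices to show that whenever $x\le x'$ componentwise (for $x,x'\in\mathbb B^n$), there exists a coupling $(\hat\sigma(x),\hat\sigma(x'))$ of $\sigma(x)$ and $\sigma(x')$ satisfying $\hat\sigma(x)\le\hat\sigma(x')$ almost surely; then applying the lemma with $\gamma=\tau$ immediately yields $\mathbb P(\sigma(x)\ge\tau)\le\mathbb P(\sigma(x')\ge\tau)$.

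For the independent probability coverage model, I would build the coupling on the \emph{arc} level. Introduce a collection of mutually independent Bernoulli random variables $\{\xi_{ij}:(i,j)\in E\}$ with $\xi_{ij}\sim\mathrm{Bernoulli}(a_{ij})$, and declare node $j\in V_2$ covered under a selection $y$ iff there exists $i\in V_1$ with $y_i=1$ and $\xi_{ij}=1$. Using the \emph{same} $\xi_{ij}$'s for both $x$ and $x'$ produces a valid coupling whose marginals match the original distributions (since node $j$'s coverage probability reduces to $1-\prod_i(1-a_{ij}y_i)$). Because $x\le x'$, any $i$ witnessing coverage of $j$ under $x$ also witnesses coverage of $j$ under $x'$, so the set of covered nodes under $x$ is contained in that under $x'$, giving $\hat\sigma(x)\le\hat\sigma(x')$ pointwise.

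For the linear threshold model the coupling is even more transparent: generate a single collection of thresholds $\{\nu_j\}_{j\in V_2}$, i.i.d.\ uniform on $[0,1]$, and use these \emph{same} thresholds for both $x$ and $x'$. Node $j$ is covered under $y$ iff $\sum_{i:(i,j)\in E}a_{ij}y_i\ge\nu_j$; since $a_{ij}\ge 0$ and $x\le x'$, we have $\sum_i a_{ij}x_i\le\sum_i a_{ij}x'_i$, so whenever $j$ is covered under $x$ it is also covered under $x'$, and again $\hat\sigma(x)\le\hat\sigma(x')$ almost surely. In both cases Lemma~\ref{lemma:coupling} finishes the proof.

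I do not anticipate a serious obstacle. The only subtlety worth flagging is to verify that the coupling constructions above preserve the correct marginal laws: in the independent model, the events $\{j\text{ covered}\}$ derived from the shared $\xi_{ij}$'s have exactly probability $P(x,j)=1-\prod_i(1-a_{ij}x_i)$ and are mutually independent across $j$ (because the $\xi_{ij}$'s for different $j$ are disjoint), matching the true distribution of the Poisson-binomial coverage counts; in the threshold model, the shared $\nu_j$'s reproduce the prescribed uniform mechanism for each $y$. Once these marginal checks are recorded, the monotone-coupling inequality follows purely from $x\le x'$, and the result holds for arbitrary $\tau$ since Lemma~\ref{lemma:coupling} is applied with the free parameter $\gamma=\tau$.
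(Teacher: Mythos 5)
Your proof is correct and rests on the same key tool as the paper's: construct a monotone coupling of $\sigma(x)$ and $\sigma(x')$ and invoke Lemma~\ref{lemma:coupling} with $\gamma=\tau$. The only difference is where the shared randomness lives. The paper uses a single generic quantile coupling: one independent uniform $U_i$ per node $i\in V_2$, with node $i$ covered under selection $y$ iff $U_i\le P(y,i)$; monotonicity of the per-node coverage probability $P(\cdot,i)$ then gives $\hat\sigma(x)\le\hat\sigma(x')$ pointwise for both models at once. You instead couple through each model's native generative mechanism (shared arc-level Bernoullis for the independent coverage model, shared thresholds $\nu_j$ for the linear threshold model). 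Both constructions preserve the correct marginals --- your checks on this are the right ones to record, in particular that coverage events remain independent across $j$ because the $\xi_{ij}$ for distinct $j$ are disjoint --- and both yield almost-sure dominance of the covered sets. Your version is arguably more transparent about \emph{why} monotonicity holds (a witness for coverage under $x$ remains a witness under $x'$), while the paper's version is more economical in that one construction handles any model in which $P(x,i)$ is componentwise nondecreasing and coverage is independent across nodes. Either argument is complete.
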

\begin{proof}
	Suppose that we have two binary vectors, $x'$ and $x''$. Let $X'$ and $X''$ be the support of the vectors $x'$ and $x''$, and suppose that $X' \subseteq X''$. Recall that for the linear threshold model, $P(x,i) = \sum_{j \in V_1} a_{j,i}\revised{x_j}$, and  for the independent probability coverage model, $P(x,i) = 1- \prod_{j \in V_1} (1-a_{j,i}x_j) $. Since $P(x,i)$ is a monotone increasing function in $x$, $P(x',i) \le P(x'',i)$ for all $i = 1,\dots,m$. We construct two random variables $\hat \sigma(x')$ and $\hat \sigma(x'')$, which is a coupling of $\sigma(x')$ and $\sigma(x'')$. We apply the technique shown in \cite{Pollard2001} (Chapter 10, Example 1) to generate $\hat \sigma(x')$ and $\hat \sigma(x'')$. Let $U_i$ be an independent random variable with uniform distribution in $[0,1]$ for all $i=1,\dots,m$. Let $\hat \sigma_i(x')$ be a random variable for all $i = 1,\dots,m$ such that
$ \hat \sigma_i(x')= 1 $ if $ U_i \le P(x',i) $, and 0 otherwise. Let $\hat \sigma_i(x'')$ be a random variable for all $i = 1,\dots,m$  such that
	$ \hat \sigma_i(x'')=1$ if $  U_i \le P(x'',i)$, and 0 otherwise.
	We set $\hat \sigma(x') = \sum_{i=1}^{m} \sigma_i(x')$ and $\hat \sigma(x'') = \sum_{i=1}^{m} \sigma_i(x'')$. Since $P(x',i) \le P(x'',i)$, $\hat \sigma_i(x') \le \hat \sigma_i(x'')$ for all $i = 1,\dots,m$. Then, $\hat \sigma(x') \le \hat \sigma(x'')$. From Lemma \ref{lemma:coupling}, if $\mathbb P(\hat \sigma(x') \le \hat \sigma(x'')) = 1$, then $\mathbb P( \sigma(x')\ge \tau ) \le \mathbb P( \sigma(x'')\ge \tau )$. This completes the proof.   	   	
\end{proof}

\ignore{
	$$ \hat \sigma_i(x')=\left\{
	\begin{aligned}
	&1, &  U_i \le P(x',i) \\
	&0, & otherwise. \\
	\end{aligned}
	\right.
	$$     	
	Let $\hat \sigma_i(x'')$ be a random variable for all $i = 1,\dots,m$  such that
	$$ \hat \sigma_i(x'')=\left\{
	\begin{aligned}
	&1, &  U_i \le P(x'',i) \\
	&0, & otherwise. \\
	\end{aligned}
	\right.
	$$     	
	}

Proposition \ref{prop:increasingP} proves the intuitive result that if more nodes from $V_1$ are selected, then we have a higher chance to cover more nodes from $V_2$. Consequently, Proposition \ref{prop:increasingP} allows us to use the stronger inequality \eqref{eq:LLcut_strong} in Algorithm \ref{alg:GDCG}. In the following subsections, we employ the DP described in this section first to reformulate the problem using a compact mathematical model, and then in Algorithm \ref{alg:nos_PSC} as the oracle $\mathcal{A}(x)$ for  PPSC.

\subsection{A Compact MIP for PPSC with a Probability Oracle} 
Using the DP representation of the oracle $\mathcal{A}(x)=\mathbb{P}( \sigma(x)\ge \tau )$,  PPSC problem \eqref{eq:PPSC} can be reformulated as a compact mathematical program
\begin{subequations}\label{eq:PPSC-NLP}
	\begin{linenomath} 
	\begin{align}
		\min~~& \sum_{i\in V_1} b_i x_i \label{eq:PPSC-NLP-obj}\\
		\text{s.t.}~~    	& \bar A_{0,0} = 1\label{eq:PPSC-bdry}\\
		&  \bar A_{i,j} =  \bar A_{i-1,j}   (1-P(x,i)),\quad   i=1,\dots, m; j=0\label{eq:PPSC-NLP-dp1}\\
		&  \bar A_{i,j} =  \bar A_{i-1,j}   (1-P(x,i)) +  \bar A_{i-1,j-1}   P(x,i), \\
		& \quad i=1,\dots, m; 0<j<i \notag\\
		&  \bar A_{i,j} =   \bar A_{i-1,j-1}  P(x,i) ,\quad i=1,\dots, m; j=i\label{eq:PPSC-NLP-dp3}\\
		& \sum_{j= \tau }^{m}  \bar A_{m,j} \geq 1-\epsilon \label{eq:PPSC-NLP-goal}\\
		& x\in\mathbb{B}^{n} \\
		&  \bar A_{i,j} \in \mathbb{R_+},\quad 0\le j \le i \le  m, \label{eq:g_final}
	\end{align}
\end{linenomath} 
\end{subequations}
where $ \bar A_{i,j}$ is a decision variable representing $A(x,i,j)$ defined in the DP formulation (we drop the dependence on $x$ for ease of notation).  Constraint \eqref{eq:PPSC-bdry} is the boundary condition of the DP, constraints \eqref{eq:PPSC-NLP-dp1}--\eqref{eq:PPSC-NLP-dp3} are the DP recursive functions, and constraint \eqref{eq:PPSC-NLP-goal} is the goal function. Note that $P(x,i)$ is a function of the decision vector $x$. Hence, formulation \eqref{eq:PPSC-NLP} is  a mixed-integer {\it nonlinear} program due to the  
constraints \eqref{eq:PPSC-NLP-dp1}-\eqref{eq:PPSC-NLP-dp3}.     Depending on the complexity of the function $P(x,i)$, this formulation may be difficult to solve. 
However, for a special case of PPSC, namely the linear threshold model, the nonlinear programming model can be reformulated as a {\it linear} mixed-integer program (MIP). Recall that for the linear threshold model, $P(x,i) = \sum_{u \in V_1} a_{u,i}x_u$. 
\ignore{
Therefore, we can represent $A(i,j)$ as $ \bar A_{i,j}$, where 
the boundary condition is $\bar A(0,0) = 1$,  and    the recursive formulation of $ \bar A_{i,j}$ for $1\le j\le m$ is 
\ignore{
$$\bar A(1,j)=\left\{
\begin{aligned}
& \sum_{j \in V_1} a_{j,1}x_j, &  j=1 \\
&1-\sum_{j \in V_1} a_{j,1}x_j, & j=0 \\
\end{aligned}
\right.
$$ and
}
$$ \bar A_{i,j}=\left\{
\begin{aligned}
& \bar A_{i-1,j}   (1-\sum_{u \in V_1} a_{u,i}x_u), &  j=0 \\
& \bar A_{i-1,j}   (1-\sum_{u \in V_1} a_{u,i}x_u) +  \bar A_{i-1,j-1}   \sum_{u \in V_1} a_{u,i}x_u, & 0<j<i  \\
& \bar A_{i-1,j-1}  \sum_{u \in V_1} a_{u,i}x_u, & j=i. \\
\end{aligned}
\right.
$$
The recursions have two kinds of variables $ \bar A_{i,j}$ and $x_j, j\in V_1$. 
}
Hence, the term $ \bar A_{i,j}   x_j$ appearing in \eqref{eq:PPSC-NLP-dp1}-\eqref{eq:PPSC-NLP-dp3} is a bilinear term, which can be linearized \cite{McC76,Adams1993}.  
To this end, we introduce the additional variables $\gamma_{u,i,j} =  \bar A_{i,j}   x_u$ for $u\in V_1, i\in V_2, 0\le j\le i$,  and obtain  an equivalent linear MIP \begin{subequations}\label{eq:PPSC-LTMIP}
	\begin{linenomath} 
	\begin{align}
		\min~~& \sum_{i\in V_1} b_i x_i \label{eq:PPSC-LTMIP-obj}\\
		\text{s.t.}~~ 	& \eqref{eq:PPSC-bdry}, \eqref{eq:PPSC-NLP-goal}-\eqref{eq:g_final}\\
		& \bar A_{i,j} = \bar A_{i-1,j}-\sum_{u \in V_1}a_{u,i} \gamma_{u,i-1,j},\quad i=1,\dots, m; j=0\label{eq:LTMIP-dp1}\\
		& \bar A_{i,j} = \bar A_{i-1,j}-\sum_{u \in V_1}a_{u,i} \gamma_{u,i-1,j} + \sum_{u \in V_1}a_{u,i} \gamma_{u,i-1,j-1},\label{eq:LTMIP-dp2}\\
		& \quad i=1,\dots, m; 0<j<i \notag\\
		& \bar A_{i,j} =  \sum_{u \in V_1}a_{u,i} \gamma_{u,i-1,j-1}, \quad i=1,\dots, m; j=i\label{eq:LTMIP-dp3}\\
		& \gamma_{u,i,j} \leq x_u,\quad i=0,\dots, m;j=0,\dots, i; u \in V_1 \label{eq:LTMIP-gamabdy1}\\
		& \gamma_{u,i,j} \leq  \bar A_{i,j},\quad i=0,\dots, m;j=0,\dots, i; u \in V_1 \\
		& \gamma_{u,i,j} \geq  \bar A_{i,j}-(1-x_u),\quad i=0,\dots, m;j=0,\dots, i; u \in V_1 \label{eq:LTMIP-gamabdy3}\\
		& \gamma_{u,i,j} \geq 0,\quad i=0,\dots, m;j=0,\dots, i; u \in V_1 . \label{eq:LTMIP-last}
	\end{align}
\end{linenomath} 
\end{subequations}
The DP recursion is represented in constraints \eqref{eq:LTMIP-dp1}-\eqref{eq:LTMIP-dp3}. Constraints \eqref{eq:LTMIP-gamabdy1}-\eqref{eq:LTMIP-last} are the McCormick linearization constraints to ensure that 
if $x_u = 0$, then $\gamma_{u,i,j}=0$, and if  $x_u = 1$, then $\gamma_{u,i,j} =  \bar A_{i,j}$.     
As a result, in this special case, the oracle is a formulable function, and the linear threshold model can be solved exactly with the compact MIP \eqref{eq:PPSC-LTMIP}. Alternatively, Algorithm \ref{alg:GDCG} can be used to solve the exponential formulation of \eqref{eq:PPSC} by delayed constraint generation. We close this subsection by noting that, for the case of the independent probability coverage model, the  mixed-integer nonlinear program \eqref{eq:PPSC-NLP} is multilinear due to the terms $   \bar A_{i-1,j-1}  (1- \prod_{u \in V_1} (1-a_{u,i}x_u))$. While such multilinear terms can also be linearized with successive application of the McCormick linearization, the resulting formulations are large scale and they suffer from weak LP relaxations. Therefore, we do not pursue such formulations for the independent probability coverage model in our computational study. 

\subsection{A General Decomposition Approach for PPSC with a Probability Oracle} 
Algorithm \ref{alg:GDCG} can be used to solve formulation \eqref{eq:PPSC} exactly. To update $\kappa(J_0)$ in Algorithm \ref{alg:nos_PSC} for $\kappa=2$ more efficiently, we utilize the DP structure of $\mathcal{A}(\bar x)$. Note that when we obtain a solution $\bar x$, with an associated $J_0$, we first calculate $\mathcal{A}(\bar x)$ using the DP, which requires the calculation of $P(\bar x,i), \forall i \in V_2$. Then, to calculate inequalities with $\kappa(J_0)=2$, we need to calculate $\mathcal{A}(\bar x+\mathbf e_j)$ for each $j \in J_0$. If we calculate $P(\bar x+\mathbf e_j,i), \forall i \in V_2$ for each $j \in J_0$ from scratch, the
 time complexity  is $\mathcal{O}(nm)$ for the independent probability coverage  and the linear threshold models. However, given that we have just calculated (and stored) all $P(\bar x,i)$ values, the time complexity of updating $P(\bar x,i)$ to $P(\bar x + \mathbf {e}_{j} ,i), \forall i \in V_2, j\in J_0$  is $\mathcal{O}(m)$.

As we will show in our computational study, when the number of decision variables is large,  Algorithm \ref{alg:GDCG} exhibits slow convergence, even if there exists an efficient probability oracle. In this case, Algorithm \ref{alg:GDCG} may check a large (worst case exponential) number of incumbent solutions $\bar x$ to obtain the optimal solution. In the next section, we consider a sampling-based approach to find approximate solutions to PPSC. The sampling-based approach enables us to use the problem structure to expedite the convergence to a solution,  but the optimal solution to the sample approximation problem may not be feasible with respect to the true distribution. In this case,  the probability oracle is used as a detector to check and correct the infeasibility of the solution given by the sampling-based approach.	

	\subsection{A Sampling-Based Approach for PPSC with a Probability Oracle} 

Using sampling-based methods, we can approximately represent the uncertainty with a finite number of possible outcomes (known as scenarios). This, in turn, allows us to rewrite the
non-convex chance constraint as linear inequalities with big-M coefficients, if the desirable event $\mathcal B(x)$ has a linear representation. Such a formulation is known as the deterministic equivalent formulation.  Luedtke et al.; K\"{u}\c{c}\"{u}kyavuz; Abdi and Fukasawa; Zhao et al.\ and  Liu et al.\ \cite{Luedtke2010,Simge2012,AF16,Zhao2016,LKL17} introduce strong valid inequalities for the deterministic equivalent formulation of linear chance constraints under  right-hand side    uncertainty. Ruszczy\'{n}ski; Beraldi and Bruni;  Lejeune;  Luedtke and  Liu et al.\ \cite{Ruszczynski2002,Beraldi2010,L12,Luedtke2014,LKL16} study  general CCPs with the randomness in the   coefficient (technology) matrix  (including two-stage CCPs), and propose solution methods for the sampling-based approach. In another line of work,  Song et al.\ \cite{Song2014} consider a special case  of combinatorial chance-constrained programs, namely the chance-constrained packing problems under finite discrete distributions,  and give a 
delayed constraint generation algorithm using the so-called probabilistic cover and pack inequalities valid for the chance-constrained binary packing problems.

In this section, we consider related sampling-based reformulations of PPSC. 
First, we describe how we sample from the true distribution to 
obtain a set of scenarios (sample paths) $\Omega$ for PPSC (see \cite{KKT03,first2016} for a  detailed description). For the case of the independent probability coverage model, we 
generate a scenario  by tossing biased coins for each arc $(i,j) \in E$ with associated  probability $a_{ij}$. The coin tosses reveal if   node $j\in V_2$ is covered by  node $i\in V_2$ in which case we refer to arc $(i,j)\in E$ as a live arc.   For each sample (scenario) $\omega \in \Omega$, with a probability of occurrence  $p_\omega$, a so-called {\it live-arc graph} $G_\omega=(V_1 \cup V_2,E_\omega)$ is constructed, where $E_\omega$ is the set of live arcs  under scenario $\omega$.  We refer the reader to  Kempe et al.\ \cite{KKT03} for a scenario generation method for the linear threshold model, which results in live-arc graphs $G_\omega$ for each $\omega\in \Omega$. It is important to note that in  the  live-arc graphs of linear threshold models, each node in $ V_2$  has at most one incoming arc. Let $t_{ij}^\omega=1$ if arc $(i,j)\in E_\omega$ for $\omega\in \Omega$, and $t_{ij}^\omega=0$ otherwise.

Given a set of scenarios $\Omega$ and a target level  $\tau$, 
we can reformulate the submodular formulation \eqref{eq:PPSC} of PPSC as a two-stage  chance-constrained program under a finite discrete distribution. In the first stage, the nodes from set $V_1$ are selected by the decision vector  $x$. Then the uncertainty unfolds, and     live arcs are realized. The second-stage problem for each scenario determines  the number of nodes in $V_2$   covered by the nodes in $V_1$ selected in the first stage.  Let $y_i^\omega=1$ if node $i\in V_2$ is covered by the node selection $x$ under scenario $\omega\in \Omega$. 
Then a deterministic equivalent formulation for PPSC is
\begin{subequations}\label{eq:sample-DEP}
	\begin{linenomath} 
	\begin{align}
	\min~~& \sum_{j\in V_1} b_j x_j \label{eq:sample-DEP1}\\
	\text{s.t.}~~&\sum_{j\in V_1}  t_{ij}^{\omega} x_j \ge y_i^{\omega} & \forall i \in V_2 , \forall \omega \in \Omega \label{eq:sample-DEP2} \\
	& \sum_{i \in V_2} y_i^{\omega} \ge \tau z_\omega & \forall \omega \in \Omega \label{eq:sample-DEP3}\\
	&\sum_{\omega \in \Omega} p_{\omega} z_{\omega}\ge 1-\epsilon  \label{eq:sample-DEP4}\\
	& x\in \mathbb{B}^{n},y\in \mathbb{B}^{m \times |\Omega|},z\in \mathbb{B}^{|\Omega|}, \label{eq:sample-DEP5}
	\end{align}
\end{linenomath} 
\end{subequations} where constraints  \eqref{eq:sample-DEP2} ensure that $y_i^\omega=1$ if node $i\in V_2$ is covered by the node selection $x$ under scenario $\omega\in \Omega$, and  constraints  \eqref{eq:sample-DEP3} ensure that if $z_\omega = 1$, then  $\sum_{i \in V_2} y_i^{\omega} \ge \tau$ for all $\omega\in \Omega$. Constraint \eqref{eq:sample-DEP4} ensures that the probability that $\tau$ items are covered in $V_2$ is at least $1-\epsilon$.  Formulation \eqref{eq:sample-DEP}  is a very large-scale MIP that continues to challenge the state-of-the-art optimization solvers. Instead, delayed constraint generation methods akin to Benders decomposition method \cite{Luedtke2014,LKL16} are known to be computationally more effective for such problems. Because the second-stage problem is concerned with feasibility only, the decomposition algorithm proposed in \cite{Luedtke2014} is applicable to this formulation (Liu et al.\  \cite{LKL16} also consider the second-stage objective). However, we observe that \revised{the deterministic set covering problem, which is known to be NP-hard, can be reduced to one of the subproblems required for this algorithm.} Our computational results show that the  need to solve a large number of \revised{such}  difficult integer programming subproblems makes this algorithm prohibitive for the PPSC application. In this paper, we propose an alternative approach and use the submodularity property of PPSC to solve formulation \eqref{eq:sample-DEP}, which we describe next.

For each scenario $\omega \in \Omega$, let $\sigma_{\omega} (x)$ denote the number of nodes  in $V_2$ covered by the selection $x$ in the live-arc graph $G_\omega=(V_1 \cup V_2,E_\omega)$. It is known  that $\sigma_{\omega} (x)$ is submodular  \cite{VH93,KKT03}.  Given a set of scenarios $\Omega$ and target level $\tau $, we formulate PPSC as 
\begin{subequations}\label{eq:PPSC-sub}
	\begin{linenomath} 
	\begin{align}
	\min~~& \sum_{i \in V_1} b_i x_i \\
	\text{s.t.}~~ & \sigma_{\omega} (x) \ge \tau z_{\omega} & {\omega}\in \Omega \label{eq:PPSC-sub-1}\\
	&\sum_{\omega \in \Omega} p_{\omega} z_{\omega}\ge 1-\epsilon  \label{eq:PPSC-sub-2}\\
	& x\in \mathbb{B}^n, z\in \mathbb{B}^{|\Omega|}, 
	\end{align}
\end{linenomath} 
\end{subequations} where $z_\omega = 1$ implies that for a given $x$, $\sigma_{\omega} (x) \ge \tau$ is enforced. Constraint \eqref{eq:PPSC-sub-2} ensures that the probability that $\sigma_{\omega} (x) \ge \tau$ is at least $1-\epsilon$. Constraint \eqref{eq:PPSC-sub-1} involves a submodular function. To reformulate it using  linear inequalities, we introduce additional variables $\theta_{\omega}$ that represent the number of covered nodes in $V_2$ under scenario $\omega\in \Omega$. In what follows, we use the notation  $\sigma(x)$ for a given $x\in \mathbb B^n$ and  $\sigma(X)$ for the corresponding support $ X\subseteq V_1$ interchangeably, and the usage will be clear from the context. For a given  $\omega \in \Omega$, consider the polyhedron
$\mathcal S_\omega=\{(\theta_\omega,x)\in \mathbb{R} \times\{0,1\}^n:\theta_\omega\le \sigma_\omega(S) +\sum_{j\in V_1\setminus S} \rho^\omega_j(S) x_j, \forall S\subseteq V_1\}$,
 where $\rho_j^\omega(S)=\sigma_\omega(S\cup\{j\})-\sigma_\omega(S)$ is the marginal contribution of adding $j\in V_1\setminus S$ to the set $S$.  Nemhauser and Wolsey  \cite{NW81} show that when $\sigma_\omega(x)$ is nondecreasing and submodular $\max_x \sigma_\omega(x)$ is equivalent to $\max_{\theta_\omega,x} \{\theta_\omega: (\theta_\omega,x)\in \mathcal S_\omega\}.$

Note that we may need an exponential number of inequalities to represent the submodular function using linear inequalities. Instead of adding these inequalities a priori, we follow a delayed cut generation approach  that combines  Benders decomposition with the probability oracle to solve PPSC. The corresponding relaxed RMP is defined as 
\begin{subequations}\label{eq:PPSC-sub-Master}
	\begin{linenomath} 
	\begin{align}
	\min~~& \sum_{i \in V_1} b_i x_i \label{eq:PPSC-sub-Master-1}\\
	\text{s.t.}~~& (\theta_\omega,x) \in \mathcal{\bar C} \label{eq:PPSC-sub-Master-2}\\
	&\theta_{\omega}\ge \tau z_{\omega} & {\omega}\in \Omega \label{eq:PPSC-sub-Master-3}\\\
	&\sum_{\omega \in \Omega} p_{\omega} z_{\omega}\ge 1-\epsilon  \\
	& x\in \mathbb{B}^n, z\in \mathbb{B}^{|\Omega|}, \theta\in \mathbb R_+^{|\Omega|}, \label{eq:PPSC-sub-Master-4}
	\end{align}
\end{linenomath} 
\end{subequations}
where $\mathcal{\bar C}$ is the set of feasibility cuts associated with the decision variables $(\theta_\omega,x)$ for $\omega \in \Omega$. 	
In particular, given incumbent solution, $\bar x$,  of RMP  \eqref{eq:PPSC-sub-Master}, and its corresponding support \revised{$\bar X= \{i \in V_1:\bar{x}_i = 1\}$}, a submodular feasibility cut  \cite{NW81,NW88} is 
\begin{linenomath} 
\begin{equation}\label{eq:sub_cut}
\theta_\omega\le \sigma_\omega(\bar X) +\sum_{j\in V_1\setminus \bar X} \rho^\omega_j(\bar X) x_j.
\end{equation}
\end{linenomath} 

\ignore{
For a given incumbent solution, $\bar x$, which is a characteristic vector of the set $\bar X$, and scenario $\omega\in \Omega$, we define $\sigma_\omega(\bar X)$ as the number of covered nodes in $V_2$ for a given set of nodes $\bar X\subseteq V_1$ and $\rho_j^\omega(\bar X)=\sigma_\omega(\bar X \cup\{j\})-\sigma_\omega(\bar X)$ is the marginal contribution of adding $j\in V_1 \setminus \bar X$ to the set $\bar X$. Then the submodular feasibility cut 
\begin{linenomath} 
\begin{equation}\label{eq:sub_cut}
\theta_\omega\le \sigma_\omega(\bar X) +\sum_{j\in V_1\setminus \bar X} \rho^\omega_j(\bar X) x_j,
\end{equation}
\end{linenomath} 
is valid,  
because the submodular function $\sigma_\omega(x)$ is nondecreasing  \cite{NW81}. 
}

At each iteration of the algorithm, we solve RMP \eqref{eq:PPSC-sub-Master} to obtain an incumbent solution $(\bar x,\bar \theta,\bar z)$, which is used to generate the  submodular cuts \eqref{eq:sub_cut}, if necessary. 
In a related study,	Wu and K\"{u}\c{c}\"{u}kyavuz \cite{first2016} apply inequality \eqref{eq:sub_cut} to solve the stochastic  influence maximization problem, which aims to find a subset of $k$ nodes to reach the maximum expected number of nodes in a general (non-bipartite) network. Wu and K\"{u}\c{c}\"{u}kyavuz \cite{first2016} give   conditions under which inequalities \eqref{eq:sub_cut} are facet defining for  $\mathcal S_\omega$. We extend the work of \cite{first2016}, and propose a new class of  valid inequalities for the bipartite case. Before we give our proposed inequality, we provide a useful definition.

\begin{definition}
Given a live-arc graph $G_\omega=(V_1\cup V_2,E_\omega)$, if there exists an arc $(i,j) \in E_\omega$, where $i \in V_1$ and $j \in V_2$, then we say that $j$ is reachable from $i$. Given a set of nodes $B \subseteq V_1$, if  node $j \in V_2$ is reachable from all nodes in $B$ and $|B| \ge 2$, we say that $j$ is a common node of all nodes in $B$. Given two sets of nodes $B \subseteq V_1$ and $N \subseteq V_1$, where $B \cap N = \emptyset$, we define 		
$	\mathcal{U}_\omega (B, N) = \{j \in V_2: (i,j) \in E_\omega, \ \forall i \in B; \ (i,j) \notin E_\omega, \ \forall i \in  N \}$ as the set of nodes reachable from all nodes in $B$ but not reachable from any node in $ N$. For $k\in V_1$, let $\eta_\omega^k= |\mathcal{U}_\omega(\{k\}, V_1 \setminus \{k\})|$.
\end{definition}

Next we give a new class of valid inequalities.

\begin{proposition}\label{prop:valid_cineq}
	Given $D  \subseteq V_1$, and sets  $C_1^k \subseteq V_1$, and $C_2^k \subseteq V_2$ for $k = 1,\dots,c$ for some $c\in \mathbb Z_+$ such that $|C_1^k| \ge 2$, each pair of distinct nodes $\{i,j\} \in C_1^k$ satisfies $|\mathcal{U}_\omega (\{i,j\}, V_1 \setminus C_1^k) \cap C_2^k| = :n_\omega(C_1^k)$ for some $n_\omega(C_1^k)\in \mathbb Z_+$, and $C_2^i \cap C_2^j = \emptyset$ for all $i = 1,\dots,c $ and $j = 1,\dots,c $ with $i \neq j$, the inequality
	\begin{linenomath} 
	\begin{equation}\label{eq:CIneq}
	\theta_\omega \le \sum_{k = 1}^{c } n_\omega(C_1^k) \Big(1 -\sum_{j \in C_1^k} x_j\Big) + \sum_{k \in D } \eta_\omega^k (1 - x_k)	+ \sum_{j \in V_1} \sigma_\omega(\{j\})x_j
	\end{equation} 
\end{linenomath} 
	is valid for  $\mathcal S_\omega$.
	
	
\end{proposition}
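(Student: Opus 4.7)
The plan is to fix an arbitrary support $X=\{j\in V_1:x_j=1\}$ and show that $\sigma_\omega(X)$ is bounded above by the right-hand side of \eqref{eq:CIneq}; validity for $\mathcal S_\omega$ then follows because $\theta_\omega\le \sigma_\omega(X)$ is implied by the defining inequalities of $\mathcal S_\omega$ at $S=X$. First I would write $N(v)=\{i\in V_1:(i,v)\in E_\omega\}$ for the live-arc in-neighbors of each $v\in V_2$ and double-count arcs of $E_\omega$ with tail in $X$ to obtain $\sum_{i\in V_1}\sigma_\omega(\{i\})x_i=\sum_{v\in V_2}|N(v)\cap X|$. Comparing with the indicator formula $\sigma_\omega(X)=\sum_{v}\mathbf 1[|N(v)\cap X|\ge 1]$ gives the identity
\begin{equation*}
\sigma_\omega(X)=\sum_{i\in V_1}\sigma_\omega(\{i\})x_i-\sum_{v\in V_2}\bigl(|N(v)\cap X|-1\bigr)^+.
\end{equation*}
Substituting into \eqref{eq:CIneq}, cancelling the linear term, and using $\eta_\omega^k(1-x_k)\ge 0$ to drop the $D$-contributions reduces the claim to the purely combinatorial inequality
\begin{equation*}
\sum_{v\in V_2}(|N(v)\cap X|-1)^+\ \ge\ \sum_{k=1}^c n_\omega(C_1^k)\bigl(|C_1^k\cap X|-1\bigr).
\end{equation*}

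Next I would localise this to a single cluster $k$. Set $M_k=\{v\in C_2^k:N(v)\subseteq C_1^k\}$, $X_k=C_1^k\cap X$, and $r_k=|X_k|$. The homogeneity hypothesis says exactly that each pair $\{i,j\}\subseteq C_1^k$ is contained in $N(v)$ for precisely $n_\omega(C_1^k)$ vertices $v\in M_k$. Summing over the $\binom{r_k}{2}$ pairs of $X_k$, and noting $|N(v)\cap X|=|N(v)\cap X_k|$ for $v\in M_k$, yields the pair-counting identity
\begin{equation*}
\sum_{v\in M_k}\binom{|N(v)\cap X|}{2}=\binom{r_k}{2}\,n_\omega(C_1^k).
\end{equation*}
Combining this with the elementary estimate $\binom{a}{2}\le (r_k/2)(a-1)^+$, valid for every integer $0\le a\le r_k$, and dividing by $r_k/2$, I would obtain the per-cluster bound $\sum_{v\in M_k}(|N(v)\cap X|-1)^+\ge n_\omega(C_1^k)(r_k-1)$; the corner case $r_k\le 1$ is trivial since the right-hand side is then non-positive.

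Finally, the disjointness hypothesis $C_2^i\cap C_2^j=\emptyset$ makes the sets $M_k$ pairwise disjoint, so summing the per-cluster bound over $k=1,\ldots,c$ produces the desired global inequality. The main obstacle I anticipate is the localisation step: one must translate the set-theoretic homogeneity hypothesis on $\mathcal U_\omega$ cleanly into the pair-counting identity and then choose a convexity estimate sharp enough to preserve the linear-in-$r_k$ right-hand side. In particular, I would need to verify carefully that $\mathcal U_\omega(\{i,j\},V_1\setminus C_1^k)\cap C_2^k$ coincides with $\{v\in M_k:\{i,j\}\subseteq N(v)\}$, which is precisely where the definition of $M_k$ (via $N(v)\subseteq C_1^k$) comes in; once this bookkeeping is settled, the remaining algebra is routine.
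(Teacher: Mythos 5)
Your proof is correct, and it takes a genuinely different route from the paper's. Both arguments start from the same anchor, namely that feasibility forces $\theta_\omega\le\sigma_\omega(X)$ at the support $X$, and both implicitly rest on the overcounting identity $\sigma_\omega(X)=\sum_{i\in V_1}\sigma_\omega(\{i\})x_i-\sum_{v\in V_2}(|N(v)\cap X|-1)^+$. From there the paper proceeds by choosing, for each covered $v\in V_2$, a representative in-neighbor $r(v)\in X$, and then for each cluster $k$ with $|C_1^k\cap X|\ge 2$ a single ``hub'' $r_k\in C_1^k\cap X$; it lower-bounds the overcount by keeping only the pairs $\{j,r_k\}$ with $j\in C_1^k\setminus\{r_k\}$ and applying the homogeneity hypothesis to those star-shaped pairs, yielding $n_\omega(C_1^k)(|C_1^k\cap X|-1)$ directly through a long chain of inequalities. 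You instead aggregate over \emph{all} $\binom{r_k}{2}$ pairs of $X_k=C_1^k\cap X$: your identification $\mathcal U_\omega(\{i,j\},V_1\setminus C_1^k)\cap C_2^k=\{v\in M_k:\{i,j\}\subseteq N(v)\}$ is exactly right, the pair-counting identity $\sum_{v\in M_k}\binom{|N(v)\cap X|}{2}=\binom{r_k}{2}n_\omega(C_1^k)$ follows, and the convexity estimate $\binom{a}{2}\le(r_k/2)(a-1)^+$ for $0\le a\le r_k$ converts it into the needed per-cluster bound; disjointness of the $C_2^k$ (hence of the $M_k$) and nonnegativity of the summands then globalize it. Your version is shorter and makes the role of the ``same $n_\omega(C_1^k)$ for every pair'' hypothesis transparent, at the cost of the extra convexity step; the paper's hub-based chain is more cumbersome but its bookkeeping (the representatives $r_k$, the split into $C'$ and $C''$) is reused almost verbatim in the subsequent facet argument, which is presumably why the authors set it up that way.
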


\begin{proof}
	Consider a feasible point $( \hat \theta_\omega,  \hat  x)\in \mathcal S_\omega$. Note that we must have $ \hat  \theta_\omega\le \sigma_\omega( \hat X)$ at a feasible point, where  $ \hat X = \{i \in V_1:{ \hat x_i} = 1\}$. 
		Let $C''\subset \{1,\dots,c\}$, where  $\sum_{j \in C_1^k} x_j> 1$ for each $k \in C''$. Let $C' =\{1,\dots,c\} \setminus C''$, where  $\sum_{j \in C_1^k} x_j\le 1$ for each $k \in C'$. We create an additional dummy node $d$ in $V_1$, where $(d,v) \notin E_\omega$ for all $v \in V_2$, $\sigma_\omega( \{d\}) = 0$ and $\sigma_\omega(\hat X) = \sigma_\omega(\hat X \cup \{d\})$. For each $v \in V_2$, we define $r(v):=\min\{i\in \hat X:(i,v)\in E_w\}, $ if there exists $ (i,v) \in E_\omega$  for some $i \in \hat X$, we let $r(v):=d$, otherwise. 
		\ignore{
		$$ r(v):=
		\begin{cases}
		\min\{i\in \hat X:(i,v)\in E_w\}, & \text{ if }\exists (i,v) \in E_\omega \text{ for some }i \in \hat X,   \\
		d, & \text{ otherwise}, \\
		\end{cases}
		$$ 
		}
		Here $r(v) \neq d$ denotes the node that belongs to $\hat X$ and can reach $v \in V_2$. Let $R:=\cup_{v \in V_2}\{r(v)\}$. Recall the condition that $C_2^i \cap C_2^j = \emptyset$ for all $i,j = 1,\dots,c $ with $i \neq j$.  In other words, each $v \in V_2$ belongs to at most one $C_2^k$ for all $k=1,\dots,c$. For each $k \in C''$, since $\sum_{j \in C_1^k} x_j> 1$ and each pair of distinct nodes $\{i,j\} \in C_1^k$ satisfies $|\mathcal{U}_\omega (\{i,j\}, V_1 \setminus C_1^k) \cap C_2^k| = n_\omega(C_1^k)$ for some $n_\omega(C_1^k)\in \mathbb Z_+$, there  exists $v \in C_2^k$ such that $r(v) \in C_1^k$.  Thus, for each $k \in C''$, we define
		$r_k := \min\{r(v) \in R\cap C_1^k : v \in C_2^k \},$ 
		where $r_k$ denotes the node that belongs to $\hat X \cap C_1^k$ and can reach some node in $C_2^k$. From the previous discussion, $r_k$ exists for all $k\in C''$. Because  $ \hat  \theta_\omega\le \sigma_\omega( \hat X)$ at a feasible point, we have 
	\begin{linenomath} 
			\begin{align}
			\hat \theta_\omega &\le \sigma_\omega( \hat X) \notag\\
			&= \sum_{j \in \hat X} \sigma_\omega(\{j\})  - \sum_{v \in V_2} \sum_{j \in \hat X \setminus \{ r(v) \} } |v \cap \mathcal{U}_\omega (\{j,r(v)\},\emptyset)| \label{eq:valid_1}\\
			&= \sum_{j \in V_1} \sigma_\omega(\{j\}) \hat x_j - \sum_{v \in V_2} \sum_{j \in V_1 \setminus \{ r(v) \} } |v \cap \mathcal{U}_\omega (\{j,r(v)\},\emptyset)| \hat x_j \label{eq:valid_2}\\
			&\le \sum_{j \in V_1} \sigma_\omega(\{j\}) \hat x_j - \sum_{k\in C''}\sum_{v \in C_2^k} \sum_{j \in V_1 \setminus \{ r(v) \} } |v \cap \mathcal{U}_\omega (\{j,r(v)\},\emptyset)| \hat x_j \label{eq:valid_3}\\
			&\le \sum_{j \in V_1} \sigma_\omega(\{j\}) \hat x_j - \sum_{k\in C''}\sum_{v \in C_2^k} \sum_{j \in C_1^k \cap \{ V_1 \setminus \{ r(v) \} \} } |v \cap \mathcal{U}_\omega (\{j,r(v)\},\emptyset)| \hat x_j \label{eq:valid_4}\\
			&\le \sum_{j \in V_1} \sigma_\omega(\{j\}) \hat x_j -\sum_{k\in C''}\sum_{v \in C_2^k} \sum_{j \in C_1^k \cap \{ V_1 \setminus \{  r_k \} \} } |v \cap \mathcal{U}_\omega (\{j, r_k\},\emptyset)| \hat x_j \label{eq:valid_5}\\
			&= \sum_{j \in V_1} \sigma_\omega(\{j\}) \hat x_j - \sum_{k\in C''} \sum_{j \in C_1^k \cap \{ V_1 \setminus \{  r_k \} \} } | C_2^k \cap \mathcal{U}_\omega (\{j, r_k\},\emptyset)| \hat x_j \label{eq:valid_6}\\
			&\le \sum_{j \in V_1} \sigma_\omega(\{j\}) \hat x_j - \sum_{k\in C''} \sum_{j \in C_1^k \cap \{ V_1 \setminus \{  r_k \} \} } | C_2^k \cap \mathcal{U}_\omega (\{j, r_k\},V_1 \setminus C_1^k)| \hat x_j \label{eq:valid_7}\\
			&= \sum_{j \in V_1} \sigma_\omega(\{j\}) \hat x_j - \sum_{k\in C''} \sum_{j \in C_1^k \cap \{ V_1 \setminus \{  r_k \} \} } n_\omega(C_1^k) \hat x_j \label{eq:valid_8}\\
			&= \sum_{j \in V_1} \sigma_\omega(\{j\}) \hat x_j - \sum_{k\in C''} \sum_{j \in C_1^k \cap \{ V_1 \setminus \{  r_k \} \} } n_\omega(C_1^k) \hat x_j  + \sum_{k\in C''} n_\omega(C_1^k)(1-\hat x_{r_k}) \label{eq:valid_9}\\
			&= \sum_{j \in V_1} \sigma_\omega(\{j\}) \hat x_j - \sum_{k\in C''} \sum_{j \in C_1^k \cap V_1  } n_\omega(C_1^k) \hat x_j  +  \sum_{k\in C''} n_\omega(C_1^k) \label{eq:valid_10}\\
			&\le \sum_{j \in V_1} \sigma_\omega(\{j\}) \hat x_j + \sum_{k\in C''} n_\omega(C_1^k) \Big(1- \sum_{j \in C_1^k}\hat x_j \Big) + \sum_{k\in C'} n_\omega(C_1^k) \Big(1 -\sum_{j \in C_1^k}  \hat x_j\Big) \label{eq:valid_11}\\
			&\le  \sum_{j \in V_1} \sigma_\omega(\{j\}) +  \sum_{k = 1}^{c } n_\omega(C_1^k) \Big(1 -\sum_{j \in C_1^k}  \hat x_j\Big) 	+ \sum_{k \in D } \eta_\omega^k (1 -  \hat x_k). \label{eq:valid_12}
			\end{align}
		\end{linenomath} 
		Equality \eqref{eq:valid_1} follows from the definition of $\sigma_\omega( \hat X)$ for a given $\hat X$.
		Equality \eqref{eq:valid_2} holds because 
		$\hat x_j=0$ for $j\in V_1\setminus \hat X$ and $\hat x_j=1$ for $j\in \hat X$. Inequality \eqref{eq:valid_3} follows from the assumptions that $\bigcup_{k=1}^c C_2^k \subseteq V_2$, and $C_2^i \cap C_2^j = \emptyset$ for all $i,j = 1,\dots,c $ with $i \neq j$. Inequality \eqref{eq:valid_4} follows from $C_1^k \cap \{ V_1 \setminus \{ r(v) \} \} \subseteq V_1 \setminus \{ r(v) \}$ for $k = 1,\dots,c$ and $v \in V_2$. Inequality \eqref{eq:valid_5} follows from the definition of $r_k$ for each $k \in C''$. If $r_k \neq r(v)$ for some $k \in C''$ and $v \in C_2^k$, then there is no arc $(r_k,v)$ in $E_\omega$ and the value of $|v \cap \mathcal{U}_\omega (\{j, r_k\},\emptyset)|$ is equal to 0 for  $j \in C_1^k \cap \{ V_1 \setminus \{  r_k \} \}$. We obtain  equality \eqref{eq:valid_6} by reorganizing the terms in inequality \eqref{eq:valid_5}. Inequality \eqref{eq:valid_7} follows from $r_k \in C_1^k$  and $\mathcal{U}_\omega (\{j, r_k\},V_1 \setminus C_1^k) \subseteq \mathcal{U}_\omega (\{j, r_k\},\emptyset)$ for all $k \in C''$ and $j \in C_1^k \cap \{ V_1 \setminus \{  r_k \} \}$. Equality \eqref{eq:valid_8} follows from the assumption that each pair of distinct nodes $\{i,j\} \in C_1^k$ satisfies $|\mathcal{U}_\omega (\{i,j\}, V_1 \setminus C_1^k) \cap C_2^k| = n_\omega(C_1^k)$ for all $k=1,\dots,c$ and $n_\omega(C_1^k)\in \mathbb Z_+$. Equality \eqref{eq:valid_9} follows from $\hat x_{r_k} = 1$. We obtain equality \eqref{eq:valid_10} by reorganizing the terms in  inequality \eqref{eq:valid_9}. Inequality \eqref{eq:valid_11} follows from the assumption that    $\sum_{j \in C_1^k} x_j\le 1$ for each $k \in C'$. Finally, inequality \eqref{eq:valid_12} follows from $ \eta_\omega^k \ge 0, x_k\in\{0,1\}$, and $k\in D$. This completes the proof.


	\end{proof}

\ignore{

\begin{example}\label{ex:usual_ex}
	Let  $V_1 = \{1,2,3\}$ and  $V_2=\{1,2,3,4,5,6\}$, where $S_\omega^1=\{2,3,4,5\}$,  $S_\omega^2=\{1,2,3\}$, and $S_\omega^3=\{4,6\}$. The bipartite graph associated with this example is depicted in \figref{Fig:usual_ex}.  Consider the parameters  for  inequality \eqref{eq:CIneq} given in \tabref{Table:usual_ex1}. The corresponding inequality   \eqref{eq:CIneq} is
	\begin{linenomath} 
	\begin{align}
			\theta_\omega \le 
		5+0x_1+0x_2+x_3, \label{eq:dneempty}
	\end{align}		
\end{linenomath} 
	which is a submodular inequality \eqref{eq:sub_cut} with $\bar X=\{1,2\}$.

	\begin{figure}[htbp]
		\centering	
		\scalebox{.8}{\begin{tikzpicture}[->,>=stealth',shorten >=1pt,auto,node distance=2.8cm,
		semithick]
		\tikzstyle{every state}=[fill=none,draw=black,text=black]
		
		\node[state] 		 (2)              {$2$};
		\node[state]         (1) [right of=2] {$1$};
		\node[state]         (3) [right of=1] {$3$};
		\node[state]         (a) [below left of=2] {$1$};
		\node[state]         (b) [right of=a] {$2$};
		\node[state]         (c) [right of=b] {$3$};
		\node[state]         (d) [right of=c] {$4$};
		\node[state]         (e) [right of=d] {$5$};
		\node[state]         (f) [right of=e] {$6$};
		
		\path 
		(1) edge              node {} (b)
		(1) edge              node {} (c)
		(1) edge              node {} (e)
		(1) edge              node {} (d)
		(2) edge              node {} (a)
		(2) edge              node {} (b)
		(2) edge              node {} (c)
		(3) edge              node {} (d)
		(3) edge              node {} (f)
		;
		\end{tikzpicture}}
		\caption{An example of  set covering with 3 sets and 6 items.}
		\label{Fig:usual_ex} 
	\end{figure}

	
	\begin{table}[htb]
		\caption{The choice of parameters for  inequality \eqref{eq:CIneq} with $ D  \neq \emptyset$.}
		\label{Table:usual_ex1}	
		\begin{center}
			\begin{tabular}{ |l|l| }
				
				\hline
				$\{C_1^1, C_1^2\}$ & $D =\{d_1 , d_2\}$ \\ \hline
				$C_1^1 = \{1,2\}$ & $d_1  = \{1\}$  \\
				$n_\omega(C_1^1) = 2$       & $\eta_\omega^1 = 1$	\\
				$C_2^1 = \{2,3\}$ &   \\\hline
				$C_1^2 = \{1,3\}$ & $d_2 = \{2\}$  \\
				$n_\omega(C_1^2) = 1$       & $\eta_\omega^2 = 1$	\\
				$C_2^2 = \{4\}$ &   \\	
				\hline
				
			\end{tabular}
		\end{center}
	\end{table}
	
	If we let $D  = \emptyset$ for the same choices of $C_1^k, C_2^k, k=1,2$, then we obtain $\theta_\omega \leq 3 + x_1 + x_2 + x_3$, which is stronger than inequality \eqref{eq:dneempty}. In addition, this inequality cannot be generated as a submodular inequality \eqref{eq:sub_cut} for any selection of $\bar X$. 
	}

\begin{example}\label{ex:usual_ex}
	Let  $V_1 = \{1,2,3,4\}$ and  $V_2=\{1,2,3,4,5,6\}$. The bipartite graph associated with this example is depicted in \figref{Fig:usual_ex}.  Consider the parameters  for  inequality \eqref{eq:CIneq} given in \tabref{Table:usual_ex1}. The corresponding inequality   \eqref{eq:CIneq} is
	\begin{align}
			\theta_\omega \le 
		5+0x_1+0x_2+0x_3+x_4, \label{eq:dneempty}
	\end{align}		
	which is equivalent to a submodular inequality \eqref{eq:sub_cut} with $\bar X=\{1,2,3\}$.

	\begin{figure}[htbp]
		\centering	
		\scalebox{.8}{\begin{tikzpicture}[->,>=stealth',shorten >=1pt,auto,node distance=2.8cm,
		semithick]
		\tikzstyle{every state}=[fill=none,draw=black,text=black]
		
		\node[state] 		 (1)              {$1$};
		\node[state]         (2) [right of=1] {$2$};
		\node[state]         (3) [right of=2] {$3$};
		\node[state]         (4) [right of=3] {$4$};		
		\node[state]         (a) [below left of=1] {$1$};
		\node[state]         (b) [right of=a] {$2$};
		\node[state]         (c) [right of=b] {$3$};
		\node[state]         (d) [right of=c] {$4$};
		\node[state]         (e) [right of=d] {$5$};
		\node[state]         (f) [right of=e] {$6$};
		
		\path 
		(1) edge              node {} (b)
		(1) edge              node {} (c)
		(2) edge              node {} (a)
		(2) edge              node {} (b)
		(2) edge              node {} (c)
		(3) edge              node {} (d)
		(3) edge              node {} (f)
		(4) edge              node {} (d)
		(4) edge              node {} (e)
		;
		\end{tikzpicture}}
		\caption{An example of a bipartite graph with 4 sets and 6 items.}
		\label{Fig:usual_ex} 
	\end{figure}

	
	\begin{table}[htb]
		\caption{The choice of parameters for  inequality \eqref{eq:CIneq} with $ D  \neq \emptyset$.}
		\label{Table:usual_ex1}	
		\begin{center}
			\begin{tabular}{ |l|l| }
				
				\hline
				$\{C_1^1, C_1^2\}$ & $D =\{d_1 , d_2\}$ \\ \hline
				$C_1^1 = \{1,2\}$ & $d_1  = \{2\}$  \\
				$n_\omega(C_1^1) = 2$       & $\eta_\omega^2 = 1$	\\
				$C_2^1 = \{2,3\}$ &   \\\hline
				$C_1^2 = \{3,4\}$ & $d_2 = \{3\}$  \\
				$n_\omega(C_1^2) = 1$       & $\eta_\omega^3 = 1$	\\
				$C_2^2 = \{4\}$ &   \\	
				\hline
				
			\end{tabular}
		\end{center}
	\end{table}
	
	If we let $D  = \emptyset$ for the same choices of $C_1^k, C_2^k, k=1,2$, then we obtain a facet-defining inequality $\theta_\omega \leq 3 + x_2 + x_3+x_4$, which is stronger than inequality \eqref{eq:dneempty}. In addition, this inequality cannot be generated as a submodular inequality \eqref{eq:sub_cut} for any selection of $\bar X$. 	
	We formalize this observation next.
\end{example}

	
	\begin{proposition}\label{prop:gen}
		Inequalities  \eqref{eq:CIneq} subsume the submodular inequalities  \eqref{eq:sub_cut}.
	\end{proposition}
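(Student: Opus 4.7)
The plan is to prove the subsumption by exhibiting, for each $\bar X \subseteq V_1$, a choice of parameters in \eqref{eq:CIneq} that reduces algebraically to the submodular inequality \eqref{eq:sub_cut} at $\bar X$. For a node $v \in V_2$, let $T_\omega(v) := \{i \in V_1 : (i,v) \in E_\omega\}$ denote its set of in-neighbors in the live-arc graph. I would take, for every $v \in V_2$ with $|T_\omega(v)| \ge 2$ and $T_\omega(v) \cap \bar X \neq \emptyset$, one index $k$ with $C_1^k := T_\omega(v)$, $C_2^k := \{v\}$, and $n_\omega(C_1^k) := 1$; and set $D := \bar X$. With this choice $\eta_\omega^j$ counts exactly the $v \in V_2$ with $T_\omega(v) = \{j\}$, and vanishes for those $j \in \bar X$ without such a $v$.

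First I would verify the hypotheses of Proposition \ref{prop:valid_cineq}. The bound $|C_1^k| \ge 2$ is automatic. For any pair $\{i,j\} \subseteq T_\omega(v)$, $v$ is reachable from $i$ and $j$ and, by definition of $T_\omega(v)$, from no node of $V_1 \setminus T_\omega(v)$, so $v \in \mathcal{U}_\omega(\{i,j\}, V_1 \setminus C_1^k)$; intersecting with $C_2^k = \{v\}$ gives cardinality $1 = n_\omega(C_1^k)$. The sets $C_2^k$ are singletons at distinct nodes, hence pairwise disjoint. Next I would match the constants by partitioning the $v \in V_2$ reachable from $\bar X$ according to $|T_\omega(v)|$: those with $|T_\omega(v)| \ge 2$ are enumerated by $\sum_k n_\omega(C_1^k)$, while those with $|T_\omega(v)| = 1$ satisfy $T_\omega(v) \subseteq \bar X$ and are enumerated by $\sum_{j \in \bar X} \eta_\omega^j$; the sum equals $\sigma_\omega(\bar X)$, matching the constant in \eqref{eq:sub_cut}.

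The main work is matching the coefficient of each $x_j$. For $j \in \bar X$, the coefficient in \eqref{eq:CIneq} simplifies to $\sigma_\omega(\{j\}) - \eta_\omega^j - |\{v : |T_\omega(v)| \ge 2,\ j \in T_\omega(v)\}|$; decomposing $\sigma_\omega(\{j\})$ by $|T_\omega(v)|$ and using $|\{v : T_\omega(v) = \{j\}\}| = \eta_\omega^j$ makes it vanish, agreeing with the coefficient $0$ in \eqref{eq:sub_cut}. For $j \in V_1 \setminus \bar X$ the coefficient becomes $\sigma_\omega(\{j\}) - |\{v : |T_\omega(v)| \ge 2,\ T_\omega(v) \cap \bar X \neq \emptyset,\ j \in T_\omega(v)\}|$; since $j \notin \bar X$, any $v$ with $j \in T_\omega(v)$ and $T_\omega(v) \cap \bar X \neq \emptyset$ must have $|T_\omega(v)| \ge 2$, so the difference collapses to $|\{v : j \in T_\omega(v),\ T_\omega(v) \cap \bar X = \emptyset\}| = \rho_j^\omega(\bar X)$.

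The hard part is precisely this last bit of bookkeeping: keeping track of which nodes $v \in V_2$ are charged by the $C_1^k$ terms versus by the $\eta_\omega^j$ terms versus by the marginal $\sigma_\omega(\{j\})x_j$ terms. Once $V_2$ is partitioned according to $|T_\omega(v)|$ and the status of $T_\omega(v) \cap \bar X$, all identities above are immediate, and the remainder is simply verifying the structural conditions of Proposition \ref{prop:valid_cineq}.
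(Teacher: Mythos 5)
Your proof is correct and follows essentially the same route as the paper's: the identical choice $D=\bar X$, $C_1^k=T_\omega(v)$, $C_2^k=\{v\}$, $n_\omega(C_1^k)=1$ for each $v$ reachable from $\bar X$ and at least two nodes of $V_1$, followed by the same coefficient-by-coefficient matching with $\rho_j^\omega(\bar X)$ and the constant $\sigma_\omega(\bar X)$. Your verification that the hypotheses of Proposition~\ref{prop:valid_cineq} hold for this parameter choice is in fact slightly more explicit than the paper's.
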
	
	\begin{proof}
		We show that a submodular inequality \eqref{eq:sub_cut}	for a given $\bar X \subseteq V_1$ can be represented as a corresponding inequality \eqref{eq:CIneq}.	
		To establish this correspondence, let  $D  = \bar X$. From the definition of $\eta_\omega^d $, for $d\in V_1$,  the term $\sum_{d \in D } \eta_\omega^d $ denotes the number of nodes reachable from only one node in $\bar X$.	
		Let $\bar C$ denote a set of nodes reachable from $\bar X$ and at least two nodes in $V_1$, and let $c=|\bar C|$. For $k\in \bar C$ let  $C_1^k=\{j\in V_1| (j,k)\in E_\omega\}$, i.e., $C_1^k$ is the set of all nodes that can reach $k\in \bar C$, and  let $C_2^k = \{k\}$ with $n_\omega(C_1^k)=1$. The term $\sum_{i=1}^{c} n_\omega(C_1^i)$ denotes the number of nodes reachable from $\bar X$ and at least two nodes in $V_1$.
		Next, we show that  inequality \eqref{eq:CIneq}  with this choice of   $D $,  $C_1^k$ and $C_2^k$ for all $k=1,\dots,c$ is equivalent to the submodular inequality.
		\begin{itemize}
			\item[(i)] For each $j \in V_1 \setminus \bar X$, the coefficient of $x_{j}$ in inequality \eqref{eq:CIneq} is $\sigma_\omega(\{j\}) - \sum_{i=1}^{c} \sum_{j \in C_1^i} n_\omega(C_1^i)$, where the second term equals the number of nodes reachable from $\bar X$ and $j$. Hence this coefficient   is equivalent to the marginal contribution term $\rho^\omega_{j} (\bar X)$.
			\item[(ii)] For each $j \in \bar X$, the coefficient of $x_j$ in inequality \eqref{eq:CIneq} is $\sigma_\omega(\{j\}) - \eta_\omega^j - \sum_{i=1}^{c} \sum_{j \in C_1^i} n_\omega(C_1^k)$, where the term $ \eta_\omega^j + \sum_{i=1}^{c}  \sum_{j \in C_1^i} n_\omega(C_1^i)$ equals the number of nodes reachable from $j$, i.e., $\sigma_\omega(\{j\}) $.  Hence, this coefficient  is   $0$.
			\item[(iii)] The right-hand side of inequality \eqref{eq:CIneq} is $\sum_{d \in D } \eta_\omega^d  +\sum_{i=1}^{c} n_\omega(C_1^i)$, which equals the number of nodes reachable from $\bar X$, i.e.,  $\sigma_{\omega}(\bar X)$. 
		\end{itemize}
		Hence, any submodular inequality can be represented as an inequality \eqref{eq:CIneq}. 
		Example \ref{ex:usual_ex} shows that there are inequalities \eqref{eq:CIneq} that cannot be written as submodular inequalities \eqref{eq:sub_cut}. This completes the proof.
	\end{proof}

Next, we provide a necessary condition for inequality \eqref{eq:CIneq} to be facet defining.

\begin{proposition}\label{prop:stronger}
	Inequality \eqref{eq:CIneq} is facet defining for conv($\mathcal S_\omega$) only if $D  = \emptyset$. 
	
\end{proposition}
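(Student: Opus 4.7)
The plan is to show that when $D \neq \emptyset$, inequality \eqref{eq:CIneq} can be written as a strictly positive combination of two other valid inequalities, neither of which is a scalar multiple of it; this forces its face to have dimension strictly less than $\dim(\text{conv}(\mathcal S_\omega)) - 1$, contradicting the facet-defining property. Pick any $d \in D$; I may assume $\eta_\omega^d > 0$, since otherwise the term contributed by $d$ is identically zero and $d$ can simply be dropped from $D$ without changing the inequality, reducing to a smaller $D$ (and ultimately to $D = \emptyset$).

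Let $I_D$ denote inequality \eqref{eq:CIneq} with index set $D$, and let $I_{D \setminus \{d\}}$ be the same inequality with $D$ replaced by $D \setminus \{d\}$. By Proposition \ref{prop:valid_cineq}, $I_{D \setminus \{d\}}$ is valid for $\mathcal S_\omega$; the bound $x_d \le 1$ is trivially valid as well. A direct comparison of the right-hand sides shows
\[
\text{RHS}(I_D) \;=\; \text{RHS}(I_{D \setminus \{d\}}) \;+\; \eta_\omega^d\,(1 - x_d),
\]
so that $I_D$ is obtained by adding $\eta_\omega^d$ times $(1-x_d) \ge 0$ to $I_{D \setminus \{d\}}$. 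Consequently, if $(\theta_\omega, x) \in \mathcal S_\omega$ satisfies $I_D$ with equality, then both $I_{D \setminus \{d\}}$ and $x_d \le 1$ must be tight at this point. Since $\eta_\omega^d > 0$, tightness of the bound forces $x_d = 1$, and so the face of $\text{conv}(\mathcal S_\omega)$ defined by $I_D$ is contained in the hyperplane $\{x_d = 1\}$.

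Under the standard assumption that $\text{conv}(\mathcal S_\omega)$ is full-dimensional in $\mathbb{R}^{n+1}$, a facet has dimension $n$. If the face of $I_D$ had dimension $n$, then being contained in the $n$-dimensional hyperplane $\{x_d = 1\}$ it would coincide with the facet defined by $x_d \le 1$. But two valid inequalities defining the same facet of a full-dimensional polyhedron must be positive scalar multiples of each other, which is impossible here: the coefficient of $\theta_\omega$ is $1$ in $I_D$ and $0$ in $x_d \le 1$. This contradiction establishes that $I_D$ is not facet defining whenever $D \neq \emptyset$.

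The main obstacle I anticipate is the edge case in which $\eta_\omega^d = 0$ for some (or all) $d \in D$, which was handled above by remarking that such indices can be removed from $D$ without altering the inequality; any care about full-dimensionality of $\text{conv}(\mathcal S_\omega)$ can be verified by exhibiting $n+1$ affinely independent feasible points, which is routine since $(\theta_\omega, x)$ can vary freely in $\theta_\omega$ for sufficiently small values and $x$ ranges over $\{0,1\}^n$.
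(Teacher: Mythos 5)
Your proof is correct and rests on the same key observation as the paper's: inequality \eqref{eq:CIneq} with $D\neq\emptyset$ equals the corresponding inequality with a smaller $D$ plus the nonnegative term $\eta_\omega^d(1-x_d)$, hence it is dominated and cannot define a facet. You additionally spell out the standard polyhedral step that the paper leaves implicit (tightness forces the face into $\{x_d=1\}$, so it would have to coincide with a facet induced by an inequality not proportional to yours) and you handle the degenerate case $\eta_\omega^d=0$; the only nit is that full-dimensionality of conv($\mathcal S_\omega$) in $\mathbb{R}^{n+1}$ requires $n+2$ affinely independent points, not $n+1$.
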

\begin{proof}	
	We show that inequality \eqref{eq:CIneq} with $D=\emptyset$ given by 
	\begin{linenomath} 
	\begin{equation}\label{eq:CIneq_stronger}
		\theta_\omega \le \sum_{k = 1}^{c } n_\omega(C_1^k) \Big (1 - \sum_{j \in C_1^k}  x_j\Big ) + \sum_{j \in V_1} \sigma_\omega(\{j\})x_j
	\end{equation}
\end{linenomath} 	
	dominates inequality \eqref{eq:CIneq} with $D\ne \emptyset$. To see this, observe that the coefficients $n_\omega(C_1^k), k=1,\dots,c$ and $\sigma_\omega(\{j\}), j\in V_1$ do not depend on $D$. Hence, for the same choice of $C_1^k, C_2^k, k=1,\dots, c$, inequality  \eqref{eq:CIneq} with $D\ne \emptyset$ has the same terms as inequality  \eqref{eq:CIneq} with $D=\emptyset$, as well as the additional term $\sum_{k \in D } \eta_\omega^k (1 - x_k)\ge 0$, because $x_k\in \{0,1\}$ and $ \eta_\omega^k \ge 0$. Hence, we need to have $D  = \emptyset$ for inequality  \eqref{eq:CIneq} to be facet defining for conv($\mathcal S_\omega$).
\end{proof}

Note that we allow $D\ne \emptyset$ in the definition of inequality \eqref{eq:CIneq} to be able to show that inequality  \eqref{eq:CIneq}  subsumes submodular inequality  \eqref{eq:sub_cut}. However, we see from the necessary condition in Proposition \ref{prop:stronger} that it suffices to consider inequalities  \eqref{eq:CIneq}   with  $D=\emptyset$. 
Next we give some sufficient conditions for inequality \eqref{eq:CIneq_stronger} to be facet defining for conv($\mathcal S_\omega$).

\begin{proposition}\label{prop:facet}
	Inequality \eqref{eq:CIneq_stronger} is facet defining for conv($\mathcal S_\omega$) if  the following conditions hold: 
	\begin{itemize}
		\item[(i)]  $C_1^i \cap C_1^j = \emptyset$ for each  $i,j=1,\dots,c, i\ne j$,and 
		\item[(ii)] for each $k=1,\dots,c$, there exists at least one pair of nodes $\{i,j\} \in C_1^k$ such that  $\mathcal{U}_\omega (\{i,j\}, \emptyset) = \mathcal{U}_\omega (\{i,j\}, V_1 \setminus C_1^k)  \subseteq C_2^k$ and $\mathcal{U}_\omega (\{i,r\}, \emptyset) = \mathcal{U}_\omega (\{j,r\}, \emptyset) = \emptyset$ for all $r \in V_1 \setminus C_1^k$.	
	\end{itemize}	 
\end{proposition}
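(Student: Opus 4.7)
My approach is to construct $n+1$ affinely independent points in the face
\[
\mathcal{F} := \bigl\{(\theta_\omega, x) \in \mathrm{conv}(\mathcal{S}_\omega) : \theta_\omega = R(x)\bigr\}
\]
of inequality \eqref{eq:CIneq_stronger}, where $R(x) := \sum_{k=1}^c n_\omega(C_1^k)\bigl(1 - \sum_{j \in C_1^k}x_j\bigr) + \sum_{j \in V_1} \sigma_\omega(\{j\}) x_j$. First I note that $\mathrm{conv}(\mathcal{S}_\omega)$ is full-dimensional in $\mathbb{R}^{n+1}$, since the feasible points $(0,\mathbf{0})$, $(-1,\mathbf{0})$, and $(0, \mathbf{e}_j)$ for $j \in V_1$ form $n+2$ affinely independent elements; hence exhibiting $n+1$ affinely independent points in $\mathcal{F}$ proves $\dim \mathcal F = n$ and the facet property.

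For each $k = 1, \dots, c$ let $\{i_k, j_k\} \in C_1^k$ be the witness pair granted by condition (ii), and set $S_0 := \bigcup_{k=1}^c \{i_k, j_k\}$. Condition (i) makes these pairs disjoint, so $|S_0| = 2c$. Using condition (ii), the only pairs drawn from $S_0$ with a non-empty common reachable set are the witness pairs themselves (each contributing $n_\omega(C_1^k)$ nodes inside $C_2^k$), while higher-order intersections of reachable sets within $S_0$ are empty because no node in $V_2$ can be simultaneously reachable from members of two distinct $C_1^k$'s. Inclusion-exclusion then yields $\sigma_\omega(S_0) = \sum_k\bigl[\sigma_\omega(\{i_k\}) + \sigma_\omega(\{j_k\}) - n_\omega(C_1^k)\bigr]$, and a direct computation shows that this coincides with $R(\mathbf{1}_{S_0})$, where $\mathbf{1}_S \in \{0,1\}^n$ denotes the characteristic vector of $S \subseteq V_1$. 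Hence $(\sigma_\omega(S_0), \mathbf{1}_{S_0}) \in \mathcal{F}$.

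The $n+1$ candidate points I take are $(R(\mathbf{1}_S), \mathbf{1}_S)$ for $S$ ranging over $\mathcal{T} := \{S_0\} \cup \bigl\{S_0 \setminus \{i_k\}, S_0 \setminus \{j_k\} : 1 \le k \le c\bigr\} \cup \bigl\{S_0 \cup \{r\} : r \in V_1 \setminus S_0\bigr\}$. Feasibility of each point in $\mathcal{S}_\omega$ follows from $R(\mathbf{1}_S) \le \sigma_\omega(S)$, which is the validity of \eqref{eq:CIneq_stronger} established in Proposition~\ref{prop:valid_cineq}, while tightness in \eqref{eq:CIneq_stronger} holds by the choice of $\theta_\omega$. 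Taking differences with the base $(R(\mathbf{1}_{S_0}), \mathbf{1}_{S_0})$, the $x$-parts of the $n$ resulting vectors are the signed unit vectors $-\mathbf{e}_{i_k}, -\mathbf{e}_{j_k}, +\mathbf{e}_r$; since these are indexed by the $n$ distinct elements of $V_1$, they are linearly independent in $\mathbb R^n$, the $n$ differences are linearly independent in $\mathbb R^{n+1}$, and the $n+1$ points are affinely independent.

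The main obstacle will be the addition case $r \in C_1^k \setminus \{i_k, j_k\}$: condition (ii) only controls the witness pair $\{i_k, j_k\}$, so $\sigma_\omega(S_0 \cup \{r\})$ cannot be computed exactly from the hypotheses. This difficulty is circumvented by the observation that membership in $\mathcal{F}$ only requires feasibility together with tightness in \eqref{eq:CIneq_stronger}; the validity result of Proposition~\ref{prop:valid_cineq} supplies feasibility regardless of whether $R(\mathbf{1}_S) = \sigma_\omega(S)$ or $R(\mathbf{1}_S) < \sigma_\omega(S)$, and the affine-independence argument depends solely on the $x$-coordinates of the points, so the corresponding $\theta$-coordinate may be taken as the (possibly strictly smaller) value $R(\mathbf{1}_{S_0 \cup \{r\}})$ without any further analysis of $\sigma_\omega(S_0 \cup \{r\})$.
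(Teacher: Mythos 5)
Your construction has a genuine gap, and it stems from reading the validity inequality in the wrong direction. Writing $R(x)$ for the right-hand side of \eqref{eq:CIneq_stronger}, Proposition~\ref{prop:valid_cineq} says that every $(\theta_\omega,\mathbf 1_S)\in\mathcal S_\omega$ satisfies $\theta_\omega\le R(\mathbf 1_S)$; applied to the feasible point $(\sigma_\omega(S),\mathbf 1_S)$ this yields $\sigma_\omega(S)\le R(\mathbf 1_S)$, which is the \emph{opposite} of the inequality $R(\mathbf 1_S)\le\sigma_\omega(S)$ you invoke for feasibility. Since $(\theta_\omega,\mathbf 1_S)\in\mathcal S_\omega$ precisely when $\theta_\omega\le\sigma_\omega(S)$, your candidate point $(R(\mathbf 1_S),\mathbf 1_S)$ lies in $\mathcal S_\omega$ if and only if $R(\mathbf 1_S)=\sigma_\omega(S)$, i.e., exactly when the inequality is tight at $\mathbf 1_S$. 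Tightness is therefore not something you can ``circumvent'' for the points $S_0\cup\{r\}$ with $r\in C_1^k\setminus\{i_k,j_k\}$ — it is the entire content of the verification, and it can fail. Concretely, take $c=1$, $V_1=C_1^1=\{1,2,3\}$ with witness pair $\{1,2\}$, $V_2=\{a,b,c'\}$, arcs so that node $1$ reaches $\{a,c'\}$, node $2$ reaches $\{a\}$, node $3$ reaches $\{a,b,c'\}$, and $C_2^1=\{a\}$, $n_\omega(C_1^1)=1$. Conditions (i) and (ii) hold (the conditions on $r\in V_1\setminus C_1^1$ are vacuous), and the inequality $\theta_\omega\le 1+x_1+2x_3$ is indeed facet defining. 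But for $S=S_0\cup\{3\}=\{1,2,3\}$ one gets $R(\mathbf 1_S)=1\cdot(1-3)+2+1+3=4$ while $\sigma_\omega(S)=3$, so your point $(4,(1,1,1))$ is infeasible and your family contains only $n$ usable points. The problem is structural: the coefficient scheme of \eqref{eq:CIneq_stronger} charges each extra selected member of $C_1^k$ a uniform discount $n_\omega(C_1^k)$, but with three members of $C_1^k$ selected the true overcount involves the pairwise intersections $\mathcal U_\omega(\{i_k,r\},\emptyset)$ and $\mathcal U_\omega(\{j_k,r\},\emptyset)$, which conditions (i)--(ii) do not pin down.

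The paper's proof sidesteps exactly this issue by choosing different points for the indices in $\bigcup_k C_1^k$: for each $i\in C_1^k$ it uses the set consisting of $i$ alone together with the witness pairs of all the \emph{other} blocks $C_1^l$, $l\ne k$ (rather than $S_0\cup\{i\}$). With only one member of $C_1^k$ selected, the block-$k$ term $n_\omega(C_1^k)(1-\sum_{j\in C_1^k}x_j)$ vanishes, condition (ii) kills all cross-block overlaps, and tightness follows by direct computation; affine independence of the resulting $n+1$ points is then checked via the incidence matrix. Your points $S_0$, $S_0\setminus\{i_k\}$, $S_0\setminus\{j_k\}$, and $S_0\cup\{r\}$ for $r\in\bar L=V_1\setminus\bigcup_k C_1^k$ are all fine (each is tight, by computations analogous to the paper's), so the repair is local: replace the points $S_0\cup\{r\}$ for $r\in C_1^k\setminus\{i_k,j_k\}$ with ones in which the witness pair of block $k$ has been removed before $r$ is added, and re-verify affine independence, since the $x$-differences from the base are then no longer signed unit vectors.
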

\begin{proof}	
	Note that for $\omega \in \Omega$, dim$(\mathcal S_\omega)=n+1$. We enumerate $n+1$ affinely independent points that are on the face defined by inequality \eqref{eq:CIneq_stronger} under  conditions (i) and (ii).

	Let a pair of nodes $\{f_1^k, f_2^k\} \in C_1^k$ be selected by  condition (ii) for all $k = 1,\dots,c $, where $f_1^k \neq f_2^k$, $\mathcal{U}_\omega (\{f_1^k, f_2^k\}, \emptyset) = \mathcal{U}_\omega (\{i,j\}, V_1 \setminus C_1^k) \subseteq C_2^k$, and $\mathcal{U}_\omega (\{f_1^k,r\}, \emptyset) = \mathcal{U}_\omega (\{f_2^k,r\}, \emptyset) = \emptyset$ for all $r \in V_1 \setminus C_1^k$.  	
	Let $\bar L = V_1 \setminus \bigcup_{k=1}^c C_1^k$. Based on condition (i), $\sum_{k=1}^{c }|C_1^k| + |\bar L| = n$, which means that each node $i \in V_1$ can only belong either to  $\bar L$  or  to one set $C_1^k$ for some $k=1,\dots,c $.  We describe  $n+1$ points on the face defined by inequality \eqref{eq:CIneq_stronger} next.
	
	Consider a point  $(\theta_\omega,x)^0 = ( \sum_{k=1}^{c }\sigma_\omega(\{f_1^k\})+ \sum_{k=1}^{c }\sigma_\omega(\{f_2^k\}) - \sum_{k=1}^{c }n_\omega(C_1^k) , \beta_0)$, where $\beta_0 = \sum_{k=1}^{c} \mathbf {e}_{f_1^k}+ \sum_{k=1}^{c} \mathbf {e}_{f_2^k}$. Recall that for all $k = 1,\dots,c $,  condition (i) ensures that  $\mathcal{U}_\omega (\{f_1^k, f_2^k\}, \emptyset) = \mathcal{U}_\omega (\{f_1^k, f_2^k\}, V_1 \setminus C_1^k) \subseteq C_2^k$, so that $n_\omega(C_1^k) = |\mathcal{U}_\omega (\{f_1^k, f_2^k\}, \emptyset)|$. Let $\bar S(x) = \{i \in V_1: x_i = 1\}$. Since $\mathcal{U}_\omega (\{f_1^k,r\}, \emptyset) = \mathcal{U}_\omega (\{f_2^k,r\}, \emptyset) = \emptyset$ for all $r \in V_1 \setminus C_1^k$, we have $\sigma_{\omega}(\bar S(\beta_0)) = \sum_{k=1}^{c } (\sigma_\omega(\{f_1^k\}) +  \sigma_\omega(\{f_2^k\}) -|\mathcal{U}_\omega (\{f_1^k, f_2^k\}, \emptyset)|) = \sum_{k=1}^{c }(n_\omega(C_1^k) +\sigma_\omega(\{f_1^k\}) - n_\omega(C_1^k)  + \sigma_\omega(\{f_2^k\}) - n_\omega(C_1^k))$, hence $(\theta_\omega,\beta_0)$ is on the face defined by inequality \eqref{eq:CIneq_stronger}.	
	
	For $i \in \bar L$, let $\beta_i^{\bar L}=\sum_{k=1}^{c} \mathbf {e}_{f_1^k}+ \sum_{k=1}^{c} \mathbf {e}_{f_2^k} + \mathbf {e}_{i}$. Consider the  point $(\theta_\omega,x)^i = ( \sum_{k=1}^{c }\sigma_\omega(\{f_1^k\})  + \sum_{k=1}^{c }\sigma_\omega(\{f_2^k\}) - \sum_{k=1}^{c }n_\omega(C_1^k) + \sigma_\omega(\{i\}), \beta_i^{\bar L} )$ for each $i \in \bar L$. Since $\mathcal{U}_\omega (\{f_1^k,r\}, \emptyset) = \mathcal{U}_\omega (\{f_2^k,r\}, \emptyset) = \emptyset$ for all $r \in V_1 \setminus C_1^k$, we have $\sigma_{\omega}(\bar S(\beta_i^{\bar L})) = \sigma_{\omega}(\bar S(\beta_0))+ \sigma_\omega(\{i\})= \sum_{k=1}^{c } \sigma_\omega(\{f_1^k\})+ \sum_{k=1}^{c } \sigma_\omega(\{f_2^k\})- \sum_{k=1}^{c }|\mathcal{U}_\omega (\{f_1^k, f_2^k\}, \emptyset)| + \sigma_\omega(\{i\}) = \sum_{k=1}^{c } (n_\omega(C_1^k) +\sigma_\omega(\{f_1^k\}) - n_\omega(C_1^k)  + \sigma_\omega(\{f_2^k\}) - n_\omega(C_1^k))+ \sigma_\omega(\{i\})$, hence $(\theta_{\omega}^i,\beta_i^{\bar L})$ for all $i \in \bar L$ are on the face defined by inequality \eqref{eq:CIneq_stronger}.


	Let $\bar C =\{1,\dots,c\}$. For $i \in C_1^k$, $k = 1,\dots,c $, let $\beta_i^k=\mathbf {e}_{i}+\sum_{j \in \bar C \setminus \{k\}} \mathbf {e}_{f_1^j} + \sum_{j \in \bar C \setminus \{k\}} \mathbf {e}_{f_2^j} $. Consider the  point
	$(\theta_\omega,x)^{ik} = (  \sum_{j\in \bar C \setminus \{k\}}(\sigma_\omega(\{f_1^k\}) + \sigma_\omega(\{f_2^k\}) - n_\omega(C_1^j)) + n_\omega(C_1^k) + \sigma_\omega(\{i\}) - n_\omega(C_1^k), \beta_i^k)$ for each $i \in C_1^k$ and $k = 1, \dots, c $.  For  $i \in C_1^k$,  $k = 1, \dots, c $,  condition (ii) ensures that $\mathcal{U}_\omega (\{i,f_1^j\}, \emptyset) = \mathcal{U}_\omega (\{i,f_2^j\}, \emptyset) = \emptyset$ for all $j = 1,\dots,c $ and $j \neq k$. We have $\sigma_{\omega}(\bar S(\beta_i^k)) = \sigma_{\omega}(\bar S(\beta_0)) - \sigma_\omega(\{f_1^k\}) -\sigma_\omega(\{f_2^k\}) + n_\omega(C_1^k) + \sigma_\omega(\{i\}) - n_\omega(C_1^k) =  \sum_{j\in \bar C \setminus \{k\}}  (n_\omega(C_1^j) + \sigma_\omega(\{f_1^j\}) - n_\omega(C_1^j) + \sigma_\omega(\{f_2^j\}) -  n_\omega(C_1^j)) + n_\omega(C_1^k) + \sigma_\omega(\{i\}) - n_\omega(C_1^k)$, hence $(\theta_\omega,\beta_i^k)^{ik}$ for all $i \in C_1^k$ and $k = 1, \dots, c $ are on the face defined by inequality \eqref{eq:CIneq_stronger}. These $1+|\bar L|+\sum_{k=1}^{c }|C_1^k| = n+1$ points are affinely independent.

\ignore{
	\begin{sidewaystable}[htp]
	\caption{The matrix of $n+1$ affinely independent points}		
	\label{Table:Matrix}
	\[
	\kbordermatrix{
		& 1^1 & \dots & f_1^1 & \dots & f_2^1 & \dots & |C_1^1|^1 & \dots  
		& 1^k& \dots & f_1^k & \dots & f_2^k & \dots & |C_1^k|^k &
		\dots 
		& 1^{c } & \dots & f_1^{c } & \dots & f_2^{c } & \dots & |C_1^{c }|^{c }  
		& 1^0 & \dots & i^0 & \dots & |\bar L|^0\\
		\beta_0 & 0 & \dots & 1 & \dots & 1 & \dots & 0 &
		\dots &
		0 & \dots & 1 & \dots & 1 & \dots & 0 &
		\dots &
		0 & \dots & 1 & \dots & 1 & \dots & 0 &
		0 & \dots & 0 & \dots & 0\\
		\beta_{1^0}^{\bar L} & 0 & \dots & 1 & \dots & 1 & \dots & 0 &
		\dots &
		0 & \dots & 1 & \dots & 1 & \dots & 0 &
		\dots &
		0 & \dots & 1 & \dots & 1 & \dots & 0 &
		1 & \dots & 0 & \dots & 0\\
		\vdots & \vdots & \dots & \vdots & \dots & \vdots & \dots & \vdots &
		\dots &
		\vdots & \dots & \vdots & \dots & \vdots & \dots & \vdots &
		\dots &
		\vdots & \dots & \vdots & \dots & \vdots & \dots & \vdots &
		\vdots & \dots & \vdots & \dots & \vdots\\
		\beta_{i^0}^{\bar L} & 0 & \dots & 1 & \dots & 1 & \dots & 0 &
		\dots &
		0 & \dots & 1 & \dots & 1 & \dots & 0 &
		\dots &
		0 & \dots & 1 & \dots & 1 & \dots & 0 &
		0 & \dots & 1 & \dots & 0\\	
		\vdots & \vdots & \dots & \vdots & \dots & \vdots & \dots & \vdots &
		\dots &
		\vdots & \dots & \vdots & \dots & \vdots & \dots & \vdots &
		\dots &
		\vdots & \dots & \vdots & \dots & \vdots & \dots & \vdots &
		\vdots & \dots & \vdots & \dots & \vdots\\	
		\beta_{|\bar L|^0}^{\bar L} & 0 & \dots & 1 & \dots & 1 & \dots & 0 &
		\dots &
		0 & \dots & 1 & \dots & 1 & \dots & 0 &
		\dots &
		0 & \dots & 1 & \dots & 1 & \dots & 0 &
		0 & \dots & 0 & \dots & 1\\		
		\beta_{1^1}^1 & 1 & \dots & 0 & \dots & 0 & \dots & 0 &
		\dots &
		0 & \dots & 1 & \dots & 1 & \dots & 0 &
		\dots &
		0 & \dots & 1 & \dots & 1 & \dots & 0 &
		0 & \dots & 0 & \dots & 0\\	
		\vdots & \vdots & \dots & \vdots & \dots & \vdots & \dots & \vdots &
		\dots &
		\vdots & \dots & \vdots & \dots & \vdots & \dots & \vdots &
		\dots &
		\vdots & \dots & \vdots & \dots & \vdots & \dots & \vdots &
		\vdots & \dots & \vdots & \dots & \vdots\\	
		\beta_{f_1^1}^1 & 0 & \dots & 1 & \dots & 0 & \dots & 0 &
		\dots &
		0 & \dots & 1 & \dots & 1 & \dots & 0 &
		\dots &
		0 & \dots & 1 & \dots & 1 & \dots & 0 &
		0 & \dots & 0 & \dots & 0\\	
		\vdots & \vdots & \dots & \vdots & \dots & \vdots & \dots & \vdots &
		\dots &
		\vdots & \dots & \vdots & \dots & \vdots & \dots & \vdots &
		\dots &
		\vdots & \dots & \vdots & \dots & \vdots & \dots & \vdots &
		\vdots & \dots & \vdots & \dots & \vdots\\			
		\beta_{f_2^1}^1 & 0 & \dots & 0 & \dots & 1 & \dots & 0 &
		\dots &
		0 & \dots & 1 & \dots & 1 & \dots & 0 &
		\dots &
		0 & \dots & 1 & \dots & 1 & \dots & 0 &
		0 & \dots & 0 & \dots & 0\\	
		\vdots & \vdots & \dots & \vdots & \dots & \vdots & \dots & \vdots &
		\dots &
		\vdots & \dots & \vdots & \dots & \vdots & \dots & \vdots &
		\dots &
		\vdots & \dots & \vdots & \dots & \vdots & \dots & \vdots &
		\vdots & \dots & \vdots & \dots & \vdots\\	
		\beta_{|C^1_1|^1}^1 & 0 & \dots & 0 & \dots & 0 & \dots & 1 &
		\dots &
		0 & \dots & 1 & \dots & 1 & \dots & 0 &
		\dots &
		0 & \dots & 1 & \dots & 1 & \dots & 0 &
		0 & \dots & 0 & \dots & 0\\	
		\vdots & \vdots & \dots & \vdots & \dots & \vdots & \dots & \vdots &
		\dots &
		\vdots & \dots & \vdots & \dots & \vdots & \dots & \vdots &
		\dots &
		\vdots & \dots & \vdots & \dots & \vdots & \dots & \vdots &
		\vdots & \dots & \vdots & \dots & \vdots\\		
		\beta_{1^k}^k & 0 & \dots & 1 & \dots & 1 & \dots & 0 &
		\dots &
		1 & \dots & 0 & \dots & 0 & \dots & 0 &
		\dots &
		0 & \dots & 1 & \dots & 1 & \dots & 0 &
		0 & \dots & 0 & \dots & 0\\	
		\vdots & \vdots & \dots & \vdots & \dots & \vdots & \dots & \vdots &
		\dots &
		\vdots & \dots & \vdots & \dots & \vdots & \dots & \vdots &
		\dots &
		\vdots & \dots & \vdots & \dots & \vdots & \dots & \vdots &
		\vdots & \dots & \vdots & \dots & \vdots\\
		\beta_{f_1^k}^k & 0 & \dots & 1 & \dots & 1 & \dots & 0 &
		\dots &
		0 & \dots & 1 & \dots & 0 & \dots & 0 &
		\dots &
		0 & \dots & 1 & \dots & 1 & \dots & 0 &
		0 & \dots & 0 & \dots & 0\\
		\vdots & \vdots & \dots & \vdots & \dots & \vdots & \dots & \vdots &
		\dots &
		\vdots & \dots & \vdots & \dots & \vdots & \dots & \vdots &
		\dots &
		\vdots & \dots & \vdots & \dots & \vdots & \dots & \vdots &
		\vdots & \dots & \vdots & \dots & \vdots\\	
		\beta_{f_2^k}^k & 0 & \dots & 1 & \dots & 1 & \dots & 0 &
		\dots &
		0 & \dots & 0 & \dots & 1 & \dots & 0 &
		\dots &
		0 & \dots & 1 & \dots & 1 & \dots & 0 &
		0 & \dots & 0 & \dots & 0\\	
		\vdots & \vdots & \dots & \vdots & \dots & \vdots & \dots & \vdots &
		\dots &
		\vdots & \dots & \vdots & \dots & \vdots & \dots & \vdots &
		\dots &
		\vdots & \dots & \vdots & \dots & \vdots & \dots & \vdots &
		\vdots & \dots & \vdots & \dots & \vdots\\		
		\beta_{|C_1^k|^k}^k & 0 & \dots & 1 & \dots & 1 & \dots & 0 &
		\dots &
		0 & \dots & 0 & \dots & 0 & \dots & 1 &
		\dots &
		0 & \dots & 1 & \dots & 1 & \dots & 0 &
		0 & \dots & 0 & \dots & 0\\	
		\vdots & \vdots & \dots & \vdots & \dots & \vdots & \dots & \vdots &
		\dots &
		\vdots & \dots & \vdots & \dots & \vdots & \dots & \vdots &
		\dots &
		\vdots & \dots & \vdots & \dots & \vdots & \dots & \vdots &
		\vdots & \dots & \vdots & \dots & \vdots\\	
		\beta_{1^c}^{c } & 0 & \dots & 1 & \dots & 1 & \dots & 0 &
		\dots &
		0 & \dots & 1 & \dots & 1 & \dots & 0 &
		\dots &
		1 & \dots & 0 & \dots & 0 & \dots & 0 &
		0 & \dots & 0 & \dots & 0\\	
		\vdots & \vdots & \dots & \vdots & \dots & \vdots & \dots & \vdots &
		\dots &
		\vdots & \dots & \vdots & \dots & \vdots & \dots & \vdots &
		\dots &
		\vdots & \dots & \vdots & \dots & \vdots & \dots & \vdots &
		\vdots & \dots & \vdots & \dots & \vdots\\		
		\beta_{f_1^c}^{c } & 0 & \dots & 1 & \dots & 1 & \dots & 0 &
		\dots &
		0 & \dots & 1 & \dots & 1 & \dots & 0 &
		\dots &
		0 & \dots & 1 & \dots & 0 & \dots & 0 &
		0 & \dots & 0 & \dots & 0\\
		\vdots & \vdots & \dots & \vdots & \dots & \vdots & \dots & \vdots &
		\dots &
		\vdots & \dots & \vdots & \dots & \vdots & \dots & \vdots &
		\dots &
		\vdots & \dots & \vdots & \dots & \vdots & \dots & \vdots &
		\vdots & \dots & \vdots & \dots & \vdots\\		
		\beta_{f_2^c}^{c } & 0 & \dots & 1 & \dots & 1 & \dots & 0 &
		\dots &
		0 & \dots & 1 & \dots & 1 & \dots & 0 &
		\dots &
		0 & \dots & 0 & \dots & 1 & \dots & 0 &
		0 & \dots & 0 & \dots & 0\\	
		\vdots & \vdots & \dots & \vdots & \dots & \vdots & \dots & \vdots &
		\dots &
		\vdots & \dots & \vdots & \dots & \vdots & \dots & \vdots &
		\dots &
		\vdots & \dots & \vdots & \dots & \vdots & \dots & \vdots &
		\vdots & \dots & \vdots & \dots & \vdots\\		
		\beta_{|C_1^{c }|^c}^{c } & 0 & \dots & 1 & \dots & 1 & \dots & 0 &
		\dots &
		0 & \dots & 1 & \dots & 1 & \dots & 0 &
		\dots &
		0 & \dots & 0 & \dots & 0 & \dots & 1 &
		0 & \dots & 0 & \dots & 0\\							
	}
	\] 
\end{sidewaystable}	
	
We represent the points corresponding to $x=\beta_i$ for $i\in \bar L\cup\{0\}$ and $x=\beta_i^k$ for $i \in C_1^k$, $k = 1,\dots,c $  in the rows of the matrix given in \tabref{Table:Matrix}. 
Let $\bar L:=\{1^0,2^0,\dots,|\bar L|^0\}$, and $C_1^k=\{1^k,2^k,\dots,|C_1^k|\}$	for $k=1,\dots,c$. 
The columns of  \tabref{Table:Matrix} are reordered  according to the indices of $x$ identified in the column names. 
From this matrix representation, it is easy to see that these $n+1$ rows are affinely independent. This completes the proof.	
}
\end{proof}

\ignore{

Next we demonstrate an example of inequality  \eqref{eq:CIneq} that is facet defining for conv($\mathcal S_\omega$).

\begin{example}
	Consider the example depicted in \figref{Fig:special_ex} with $V_1 = \{1,2,3\}$ and  $V_2=\{1,2,3\}$, where  $S_\omega^1=\{1,2\}$, $S_\omega^2=\{2,3\}$, and  $S_\omega^3=\{1,3\}$. Let $C_1^1 = \{1,2,3\}$, $C_2^1 = \{1,2,3\}$ and $n_\omega(C_1^1) = 1$. For $C_1^1$, the pair $\{1,2\}$ has a common node $2$, the pair $\{1,3\}$ has a common node $1$, and the pair $\{2,3\}$ has a common node $3$. Hence, each pair of nodes in $C_1^1$ has  $n_\omega(C_1^1) = 1$ common node that belongs to $C_2^1$. Then, the corresponding inequality  \eqref{eq:CIneq} is	
	\begin{equation*}
		\theta_\omega\le n_\omega(C_1^1) + (2-n_\omega(C_1^1))x_1+(2-n_\omega(C_1^1))x_2+(2-n_\omega(C_1^1))x_3 = 1+x_1+x_2+x_3,
	\end{equation*} 
	which is facet defining conv($\mathcal S_\omega$).

	\begin{figure}[htbp]
		\centering	
		\scalebox{.6}{\begin{tikzpicture}[->,>=stealth',shorten >=1pt,auto,node distance=2.8cm,
		semithick]
		\tikzstyle{every state}=[fill=none,draw=black,text=black]
		
		\node[state] 		 (1)              {$1$};
		\node[state]         (2) [right of=1] {$2$};
		\node[state]         (3) [right of=2] {$3$};
		\node[state]         (a) [below  of=1] {$1$};
		\node[state]         (b) [right of=a] {$2$};
		\node[state]         (c) [right of=b] {$3$};

		\path 
		(1) edge              node {} (b)
		(1) edge              node {} (a)
		(2) edge              node {} (b)
		(2) edge              node {} (c)
		(3) edge              node {} (a)
		(3) edge              node {} (c)
		;
		\end{tikzpicture}}
		\caption{$G_\omega$ for which  inequality  \eqref{eq:CIneq} is a facet of conv($\mathcal S_\omega$).}
		\label{Fig:special_ex} 
	\end{figure}

\end{example}

}

Algorithm \ref{alg:benders} describes a sampling-based method to solve PPSC by using inequalities \eqref{eq:sub_cut} or   \eqref{eq:CIneq} as  feasibility cuts. The proposed algorithm includes the Benders phase (Lines \ref{alg4:2}-\ref{alg4:14}) and the oracle phase (Lines \ref{alg4:15}-\ref{alg4:28}). Algorithm \ref{alg:benders} starts with a given set of feasibility cuts,  $\mathcal {\bar C}$. In the Benders phase, master problem \eqref{eq:PPSC-sub-Master} provides an incumbent solution $(\bar x, \bar \theta, \bar z)$ at each iteration (Line \ref{alg4:3}). \revised{For each scenario, the incumbent solution $(\bar x, \bar \theta, \bar z)$ is used for checking  feasibility (Line \ref{alg4:11}). If the condition  in Line \ref{alg4:11} is not satisfied, then   $(\bar x, \bar \theta, \bar z)$ is infeasible for the sample approximation problem. In this situation, we add a feasibility cut \eqref{eq:sub_cut} or \eqref{eq:CIneq} for  $\omega$ under the condition in Line \ref{alg4:6}. The Benders phase terminates when the condition in Line \ref{alg4:11} is satisfied.} The optimal solution given by the Benders phase to the sample approximation problem is then checked for feasibility with respect to the true distribution, by calling the subroutine FeasibilityCut($\bar x, \kappa,\bar{\mathcal C}$)  in the oracle phase   (Lines \ref{alg4:15}-\ref{alg4:28}). We use inequality \eqref{eq:LLcut_strong}  with  $\kappa(J_0)\le \kappa$ as the feasibility cut to cut off infeasible $\bar x$ until master problem \eqref{eq:PPSC-sub-Master} provides a truly feasible solution to the original (non-sampled) problem.

\begin{algorithm}[htb]\label{alg:benders}
	\SetAlgoLined
	Input: $\kappa\in\{1,2\}$. Start with $\mathcal {\bar C}=\{0\le  \theta_\omega\le m, \omega\in \Omega \}$ \label{alg4:1} \;
	\While{$True$}
	{ \label{alg4:2}
		Solve master problem \eqref{eq:PPSC-sub-Master} and obtain an incumbent solution $(\bar x, \bar \theta, \bar z)$ \label{alg4:3}\;		
		\If{\revised{$\sum_{\omega \in \Omega}\{p_\omega: \sigma_\omega (\bar x)\ge \tau \} \ge 1-\epsilon $}}
		{ \label{alg4:11}
			\bf{break}\;
		}\label{alg4:13}
		\Else
		{
			\For{$\omega\in \Omega$}
			{\do 
				\; 
				\If{\revised{$\tau  > \sigma_\omega (\bar x)$ and $\theta_\omega > \sigma_\omega (\bar x)$}}
				{ \label{alg4:6}
					Add a feasibility cut \eqref{eq:sub_cut} or   \eqref{eq:CIneq} to $\mathcal {\bar C}$ in master problem \eqref{eq:PPSC-sub-Master}\; \label{alg:step-cut}
				} \label{alg4:9}
			} \label{alg4:10}		
		}		
	} \label{alg4:14}

	\While{$\mathcal{A}(\bar x) < 1-\epsilon$}
	{ \label{alg4:15}
		Call  FeasibilityCut($\bar x, \kappa,\bar{\mathcal C}$)\;
		Solve master problem \eqref{eq:PPSC-sub-Master} and obtain an incumbent solution $\bar x$ \label{alg4:27} \;		
	}\label{alg4:28}

	Output  $\bar x$ as an optimal solution.
	\caption{Sampling-Based Delayed Constraint Generation Algorithm with a Probability Oracle for PPSC}
\end{algorithm}

For a given incumbent solution $\bar x$ with $\bar X = \{i \in V_1:\bar{x_i} = 1\}$,  we generate the corresponding violated submodular inequality \eqref{eq:sub_cut} as in \cite{first2016}, if infeasible.    	 
Next we describe how to generate a violated new valid inequality \eqref{eq:CIneq_stronger} with $D  = \emptyset$ (due to the necessary facet condition in Proposition \ref{prop:stronger}), in polynomial time for a given  infeasible solution. 	 
Consider the case that a node in $V_2$ is a common node for at least two nodes in $V_1$ and at least one node in $\bar X$. We find the set of nodes in $V_2$   reachable from at least two nodes in $V_1$ and at least one node in $\bar X$ by depth-first search, with the worst case complexity $\mathcal{O}(nm)$. Then, let $V_2'$ be a subset of nodes in $V_2$, where \revised{each $j \in V_2'$ is reachable from at least two nodes in $V_1$ and at least one node in $\bar X$}. For $k\in V_2'$ let  $V_k \subseteq V_1$ denote a set of nodes that $k \in \mathcal{U}_\omega (V_k, V_1 \setminus V_k)$. Note that $V_k$ can be obtained by solving a reachability problem to find which nodes in $V_1$ can reach node $k\in V_2'$. For each $k \in V_2'$, we let $C_1^k = V_k$ and $C_2^k = \{k\}$ with $n_\omega(C_1^k)=1$. The complexity of generating $C_1^k$ for all $k = 1,\dots, |V_2'|$ is $\mathcal{O}(m|V_2'|)$.  Thus, a violated inequality \eqref{eq:CIneq_stronger} can be generated in polynomial time. 

\revised{Finally, for PPSC with the linear threshold model, given a set of sampled scenarios, we observe that a polynomial number of submodular inequalities \eqref{eq:sub_cut} or  inequalities \eqref{eq:CIneq} is sufficient  to reach an optimal solution of the  master problem \eqref{eq:PPSC-sub-Master}.  In the following propositions, we summarize this result and its consequence of providing a compact MIP to solve PPSC with the linear threshold model. }

\revised{\begin{proposition}\label{prop:sample_LT}
	For PPSC with the linear threshold model, 
	adding the submodular inequalities \eqref{eq:sub_cut} with $\bar X=\emptyset $, which are equivalent to \eqref{eq:CIneq_stronger} for any choice of parameters, 
	to the set $\mathcal{\bar C} $ 
for all $\omega \in \Omega$ is sufficient	 to reach an optimal solution of the  master problem \eqref{eq:PPSC-sub-Master}.
	\end{proposition}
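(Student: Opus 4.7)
The plan is to exploit a special structural property of the linear threshold model: as the paper has already noted, each node $j \in V_2$ has at most one incoming arc in any live-arc graph $G_\omega$ generated under this model. My first step would be to use this to establish that the coverage function $\sigma_\omega$ is in fact modular (not merely submodular) over $2^{V_1}$. The argument is direct: because no node in $V_2$ can be covered by two distinct nodes of $V_1$ simultaneously in scenario $\omega$, the coverage contributions of the elements of any $X \subseteq V_1$ are disjoint, giving $\sigma_\omega(X) = \sum_{j \in X} \sigma_\omega(\{j\})$ for every $X \subseteq V_1$.

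My next step would be to instantiate the submodular inequality \eqref{eq:sub_cut} at $\bar X = \emptyset$. Since $\sigma_\omega(\emptyset) = 0$ and $\rho_j^\omega(\emptyset) = \sigma_\omega(\{j\})$, the inequality collapses to $\theta_\omega \le \sum_{j \in V_1} \sigma_\omega(\{j\}) x_j$. By the modularity just established, the right-hand side evaluated at any binary $x$ with support $X$ equals $\sigma_\omega(X)$ exactly, so this single linear inequality represents the relation $\theta_\omega \le \sigma_\omega(x)$ on the whole integer-feasible region, rather than a proper relaxation of it.

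I would then verify the equivalence claim of the proposition. Adding the above inequality for each $\omega \in \Omega$ to $\mathcal{\bar C}$ and combining it with \eqref{eq:PPSC-sub-Master-3} yields $\tau z_\omega \le \theta_\omega \le \sigma_\omega(x)$, which is precisely \eqref{eq:PPSC-sub-1}. Conversely, any $(\bar x, \bar z)$ feasible for \eqref{eq:PPSC-sub} is lifted to a master-problem-feasible triple by setting $\bar \theta_\omega = \sigma_\omega(\bar x)$. The secondary equivalence claim with \eqref{eq:CIneq_stronger} is then immediate: under the single-incoming-arc property, no two nodes of $V_1$ share a common reachable node in $V_2$, so every admissible $C_1^k$ has $n_\omega(C_1^k) = 0$, making the $C_1^k$-dependent terms of \eqref{eq:CIneq_stronger} vanish and leaving exactly $\theta_\omega \le \sum_{j \in V_1} \sigma_\omega(\{j\}) x_j$.

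The only substantive step is the modularity observation; everything else is routine bookkeeping. In particular, because the family added to $\mathcal{\bar C}$ consists of exactly one inequality per scenario, and each inequality is written down in time $O(nm)$ from the corresponding live-arc graph, the resulting formulation is a compact deterministic MIP equivalent to \eqref{eq:PPSC-sub}, and no further cut generation in the Benders phase of Algorithm \ref{alg:benders} is required to reach an optimum of \eqref{eq:PPSC-sub-Master}.
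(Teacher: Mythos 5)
Your proposal is correct and follows essentially the same route as the paper: both rest on the observation that each node in $V_2$ has at most one incoming live arc under the linear threshold model, which forces $n_\omega(C_1^k)=0$ and makes $\sigma_\omega$ modular, so the single cut $\theta_\omega\le \sum_{j\in V_1}\sigma_\omega(\{j\})x_j$ captures the coverage function exactly. The only cosmetic difference is that you phrase the sufficiency as exactness of this inequality on the integer points, whereas the paper phrases it as domination of every submodular cut with $\bar X\ne\emptyset$; these are two ways of stating the same modularity fact.
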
    
\begin{proof}
	In the live-arc graph scenario generation method proposed by  Kempe et al.\ \cite{KKT03} for the linear threshold model, each node $j \in V_2$ has at most one incoming arc from a node $i \in V_1$ for each scenario $\omega \in \Omega$. Therefore, if $j \in V_2$ is reachable from $i \in V_1$, then $j$ is not reachable from any $i' \in V_1 \setminus \{i\}$. Therefore, 
	for any choice of $c, C_1^k , C_2^k, k=1,\ldots, c$ for inequality \eqref{eq:CIneq_stronger}, we must have  $n_\omega(C_1^k) = 0 $ for $k=1,\ldots, c$. In other words, there can be no common nodes in $V_2$ that are reachable from any two distinct nodes in $V_1$. Therefore,   
	 the submodular inequalities  \eqref{eq:sub_cut} with $\bar X=\emptyset $ are equivalent to the new inequalities \eqref{eq:CIneq_stronger} for any choice of parameters, and they are given by 
	 \begin{linenomath} 
\begin{equation} \label{eq:subempty}
\theta_{\omega} \le \sum_{i \in V_1} \sigma_{\omega}(\{i\})x_i.
\end{equation}
\end{linenomath} 
Any submodular inequality  \eqref{eq:sub_cut} with $\bar X\ne \emptyset$ given by
		\begin{linenomath} 
		\begin{align*}
		\theta_\omega &\le \sigma_\omega(\bar X) +\sum_{j\in V_1\setminus \bar X} \rho^\omega_j(\bar X) x_j  = \sum_{i \in \bar X} \sigma_{\omega}(\{i\}) + \sum_{j\in V_1\setminus \bar X} \sigma_{\omega}(\{j\})x_j,
		\end{align*}
	\end{linenomath} 
	is dominated by inequality \eqref{eq:subempty}. This completes the proof.
\end{proof}

\begin{proposition}\label{prop:sample_LT2}	
  For PPSC under the linear threshold model, given a set of scenarios $\Omega$, the  master problem formulation \eqref{eq:PPSC-sub-Master}, and the deterministic equivalent formulation \eqref{eq:sample-DEP} can be reduced to the following formulation in $(x,z)$-space
		\begin{subequations}\label{eq:DEP_LT}
			\begin{linenomath} 
				\begin{align}
				\min~~& \sum_{i \in V_1} b_i x_i \label{eq:DEP_LT1}\\
				\text{s.t.}~~& \sum_{i \in V_1} \sigma_{\omega}(\{i\})x_i \ge \tau z_{\omega} & {\omega}\in \Omega \label{eq:DEP_LT2} \\
				&\sum_{\omega \in \Omega} p_{\omega} z_{\omega}\ge 1-\epsilon  \\
				& x\in \mathbb{B}^n, z\in \mathbb{B}^{|\Omega|}. \label{eq:DEP_LT4}
				\end{align}
			\end{linenomath} 
		\end{subequations}		
\end{proposition}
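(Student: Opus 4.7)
The plan is to establish the equivalence by showing that both formulations collapse, in the linear threshold case, to the same compact form because of the structural fact (already used in the proof of Proposition \ref{prop:sample_LT}) that in a live-arc graph generated by Kempe et al.\ \cite{KKT03} for the linear threshold model, each node $j \in V_2$ has at most one incoming arc. This implies that for every $\omega \in \Omega$ the coverage count $\sigma_\omega(x) = \sum_{i \in V_1} \sigma_\omega(\{i\}) x_i$ is a linear function of $x$, which is the common mechanism driving both reductions.

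For the master problem \eqref{eq:PPSC-sub-Master}, I would invoke Proposition \ref{prop:sample_LT}, which guarantees that among all feasibility cuts of type \eqref{eq:sub_cut} or \eqref{eq:CIneq}, only the single submodular inequality $\theta_\omega \le \sum_{i \in V_1} \sigma_\omega(\{i\}) x_i$ per scenario is needed (all others are dominated in this case). Hence, without loss of optimality, $\mathcal{\bar C}$ can be replaced by $\{\theta_\omega \le \sum_{i \in V_1} \sigma_\omega(\{i\}) x_i : \omega \in \Omega\}$. Next, I would project out $\theta_\omega$: since $\theta_\omega$ appears only in the bounds $\tau z_\omega \le \theta_\omega \le \sum_{i \in V_1} \sigma_\omega(\{i\}) x_i$ and does not appear in the objective or in any other constraint, Fourier--Motzkin elimination leaves exactly $\tau z_\omega \le \sum_{i \in V_1} \sigma_\omega(\{i\}) x_i$, giving \eqref{eq:DEP_LT2}.

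For the deterministic equivalent \eqref{eq:sample-DEP}, I would eliminate the coverage variables $y_i^\omega$. Because each $i \in V_2$ has at most one incoming arc in $G_\omega$, the right-hand side of \eqref{eq:sample-DEP2} contains at most one nonzero term and is bounded above by $1$; together with $y_i^\omega \in \{0,1\}$, this makes it optimal to set $y_i^\omega = \sum_{j \in V_1} t_{ij}^\omega x_j$ (and the integrality restriction on $y$ becomes automatic from $x \in \mathbb{B}^n$). Summing \eqref{eq:sample-DEP2} over $i \in V_2$ and substituting into \eqref{eq:sample-DEP3} yields $\sum_{j \in V_1} \big(\sum_{i \in V_2} t_{ij}^\omega\big) x_j \ge \tau z_\omega$; but $\sum_{i \in V_2} t_{ij}^\omega$ is precisely the number of nodes $j\in V_1$ can reach alone in $G_\omega$, i.e., $\sigma_\omega(\{j\})$, giving \eqref{eq:DEP_LT2} again.

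The main obstacle I expect is the bookkeeping for the $y_i^\omega$ projection: one must confirm that aggregating the constraints \eqref{eq:sample-DEP2}--\eqref{eq:sample-DEP3} is not only valid but also tight, i.e., no feasible integer solution is lost in the $(x,z)$-projection. The at-most-one-incoming-arc property handles exactly this, because it guarantees that distinct $y_i^\omega$ variables depend on disjoint sets of $x$-variables, so the aggregate constraint carries the same information as the original disaggregated system together with $y_i^\omega \in \{0,1\}$. Once this tightness is checked, the two reductions combine to produce formulation \eqref{eq:DEP_LT} as claimed.
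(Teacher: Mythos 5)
Your proposal is correct and follows essentially the same route as the paper: invoke Proposition \ref{prop:sample_LT} to reduce $\mathcal{\bar C}$ to the single inequality $\theta_\omega \le \sum_{i\in V_1}\sigma_\omega(\{i\})x_i$ per scenario and project out $\theta_\omega$, then aggregate constraints \eqref{eq:sample-DEP2} over $i\in V_2$, use $\sigma_\omega(\{j\})=\sum_{i\in V_2}t_{ij}^\omega$ (valid because each node of $V_2$ has at most one incoming live arc), and project out $y$. One small correction: your tightness justification that ``distinct $y_i^\omega$ variables depend on disjoint sets of $x$-variables'' is not what the single-incoming-arc property gives (one node of $V_1$ may be the unique in-neighbor of several nodes of $V_2$); the actual reason the aggregation loses nothing, which you also state, is that each upper bound $\sum_{j}t_{ij}^\omega x_j$ contains at most one term and hence lies in $\{0,1\}$, so every $y_i^\omega$ can simultaneously attain it.
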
    
\begin{proof}
		In Proposition \ref{prop:sample_LT}, we show that adding inequalities \eqref{eq:subempty} to the master problem  \eqref{eq:PPSC-sub-Master} as  feasibility cuts is sufficient to capture the submodular coverage function $\sigma_\omega(x)$. Then, the $\theta_\omega$ variables can be projected out from the formulation using inequalities \eqref{eq:subempty} and \eqref{eq:PPSC-sub-Master-3}, leading to inequalities \eqref{eq:DEP_LT2} and the formulation \eqref{eq:DEP_LT}.
		
		Next we show that the deterministic equivalent formulation  \eqref{eq:sample-DEP} can be reduced to  formulation \eqref{eq:DEP_LT} in $(x,z)$-space for the linear threshold model. 
		From the definition of $t_{ij}^\omega$ for all $i \in V_1$, $j \in V_2$ and $\omega \in \Omega$, where $t_{ij}^\omega=1$ if arc $(i,j)\in E_\omega$ for $\omega\in \Omega$, and $t_{ij}^\omega=0$ otherwise, we have $\sigma_\omega(\{i\}) = \sum_{j \in V_2} t_{ij}^\omega$ for all $i \in V_1$ and $\omega \in \Omega$, because there is only one incoming arc to node $j\in V_2$ in every scenario $\omega\in \Omega$ in a linear threshold model \cite{KKT03}. 
	In formulation \eqref{eq:sample-DEP},  summing the constraints \eqref{eq:sample-DEP2} over all $i\in V_2$, we obtain 
		 $\sum_{i \in V_2}\sum_{j\in V_1}  t_{ij}^{\omega} x_j \ge \sum_{i \in V_2} y_i^{\omega}, \forall \omega \in \Omega $, which is equivalent to
		 \begin{linenomath}  
		 \begin{equation}\label{eq:projy}
		 \sum_{j\in V_1}  \sigma_\omega(\{j\}) x_j \ge \sum_{i \in V_2} y_i^{\omega}, \forall \omega \in \Omega.
		 \end{equation} 
		 \end{linenomath} 
	Now 	 we can project out the $y$ variables using the constraints \eqref{eq:projy} and \eqref{eq:sample-DEP3}, and obtain the constraints  \eqref{eq:DEP_LT2} and the formulation \eqref{eq:DEP_LT}. 
		 This completes the proof.
\end{proof}}

	\section{Computational Experiments}\label{sec:computation}

In this section, we report our experiments with PPSC to
	demonstrate the effectiveness of our proposed methods. All methods are implemented in C++ with IBM ILOG CPLEX 12.7 Optimizer. All experiments were executed on a Windows 8.1 operating system with an Intel Core i5-4200U 1.60 GHz CPU, 8 GB DRAM, and x64 based processor. \revised{For the master problem of the decomposition algorithms and the deterministic mixed integer programming models, we specify the MIP search method as traditional branch-and-cut with the lazycallback function of CPLEX. We set the number of threads to one. CPLEX presolve process is turned off for the traditional branch-and-cut for solving the decomposition algorithms.} The relative MIP gap tolerance of CPLEX is set to the default value, so a feasible solution which has an optimality gap of $10^{-4}$\% is considered optimal. The time limit is set to one hour.

\revised{Our dataset is motivated by human sexual contact network (human interaction network) introduced in \cite{Guler2002,Newman2003}. This class of social networks is represented as a bipartite graph, where $V_1$ and $V_2$ denote the groups of different genders and arcs denote the connections between males and females. Note that, in this context, it is natural to assume that $|V_1|$ is approximately equal to $|V_2|$. }

We generate a complete bipartite graph with arcs from all nodes $ i \in V_1$ to all nodes $j \in V_2$. We partition the nodes in $V_1$ into two sets $V_1^1$ and $V_1^2$, where each node $i \in V_1^1$ can cover a higher expected number of items than each node $j \in V_1^2$. Our computational experiments include two parts. \revised{ In the first part of our computational study, we test the exact delayed constraint generation algorithm given in Algorithm \ref{alg:GDCG}, and the sampling-based approach described in  Algorithm \ref{alg:benders} to solve PPSC under the independent probability coverage model.  In the second part, we compare the exact delayed constraint generation algorithm and the deterministic equivalent MIP formulation \eqref{eq:PPSC-LTMIP} to solve PPSC under the linear threshold model. In addition, compact MIP model \eqref{eq:DEP_LT} described in Proposition \ref{prop:sample_LT2} is applied to PPSC under the linear threshold model.}

\subsection{PPSC under the Independent Probability Coverage Model}

In this subsection, we report our experiments with the independent probability coverage model. Recall that for the independent probability coverage model, $P(x,i) = 1- \prod_{j \in V_1} (1-a_{j,i}x_j)$ is used for calculating $\mathcal{A}(x)$, where $a_{u,i}$ denotes an independent probability that the set $u$ can cover the item $i$ with probability $a_{u,i}$. Because the corresponding model \eqref{eq:PPSC-NLP} is highly nonlinear, we do not attempt to solve it for the independent probability coverage model. 
We generate a complete bipartite graph where  each arc $(i, j)$ is assigned an independent probability $a_{ij}$ of being live  for all nodes $ i \in V_1, j \in V_2$. We consider the case that the expected number of covered items for each $i \in V_1^1$ is  $20\% \pm 2\%$ of the total number of nodes in $V_2$, and the expected number of covered items for each $i \in V_1^1$ is  $2\% \pm 2\%$ of the total  number of nodes in $V_2$. In particular, we let $a_{ij}  = 0.18 + i \times (0.22-0.18)/|V_1^1|$ for  each $i \in V_1^1$, and $a_{ij} = (i-|V_1^1|) \times (0.04)/|V_1^2|$ for each $i \in V_1^2$, where we let $V_1^2=\{|V_1^1|+1,\dots,n\}$. The size of bipartite graphs is $|V| \in \{60,90,120\}$. Unless otherwise noted, we let $ n = m = |V|/2 $ and $n = |V_1^1|+|V_1^2|$.  
We let $|V_1^1| = 10$ for all instances, and $|V_1^2| = n - 10$. We set the target $\tau  = 0.6m$. The risk level is set  as \revised{$\epsilon \in \{0.0125,0.025,0.05\}$. The objective function coefficients are set as $b_i \in [1,\bar b]$ for each $i \in V_1$, where $\bar b = \{1,100\}$. Note that for $\bar b \neq 1$, we set $b_i =i \bar b/ |V_1^1|$ for $i \in V_1^1$. Since each node $j \in V_1^2$ covers a fewer expected number of items than each node $i \in V_1^1$, we set a lower cost range for $j \in V_1^2$ compared to  $i \in V_1^1$, where $b_j \in [1,\bar b/2]$ and $b_j =(|V_1|-j-1)  \bar b/ (2|V_1^2| )$ for $j \in V_1^2$. }

We first solve PPSC under the independent probability coverage model exactly by using Algorithm \ref{alg:GDCG}, which is referred to as ``Oracle". To show the effect of the choice of   $\kappa(J_0)$ in inequality \eqref{eq:LLcut_strong} on the convergence of the algorithm, we study two cases of Oracle depending on the choice of the input parameter $\kappa$, i.e.,  Oracle ($\kappa= 1$) and Oracle ($\kappa = 2$). \tabref{Table:IC_Oracle} provides the comparison between the two methods,  column ``Cuts" denotes the total number of user cuts added to the master problem and column ``Time" denotes the solution  time  in seconds.

 \tabref{Table:IC_Oracle} shows that using the stronger no-good cuts (i.e., Oracle ($\kappa = 2$))  
drastically reduces the solution time and the number of cuts required when compared to the traditional no-good cuts (i.e., 
 Oracle ($\kappa = 1$)). None of the instances can be solved within the time limit if the traditional no-good cuts are used, whereas all instances are solved in less than six minutes with the coefficient strengthening for instances with $|V|=60$, and within 20 minutes for unit-cost instances with $|V|=90$.    Hence it is worthwhile to expend additional computational effort to strengthen inequality \eqref{eq:LLcut_strong} by using a larger right-hand side ($\kappa(J_0) = 2$ versus $\kappa(J_0) = 1$).  
\revised{ We observe that the instances with non-unit costs (i.e., $\bar b=100$) are harder to solve than instances with  unit cost (i.e., $\bar b=1$). For $|V| \ge 90$, none of the non-unit cost instances can be solved within the time limit. To test the limitation of Oracle ($\kappa = 2$), we also tested  instances with a larger size $|V| > 90$ with the same parameters $\epsilon$ and $\bar b$ as in \tabref{Table:IC_Oracle}. The solution time grows exponentially as $|V|$ increases for Oracle ($\kappa = 2$). For example, Oracle ($\kappa = 2$) can  solve only one instance with $(\bar b,\epsilon)=(1,0.05)$ within the time limit for $|V|=120$ with 3283 seconds. Based on our experience, the oracle-based exact method can solve instances with  unit cost of up to 100 nodes within an hour. }

\revised{
\begin{table}[htb]
	{ \caption{Oracle ($\kappa = 1$) vs.\ Oracle ($\kappa = 2$) for PPSC with the independent probability coverage model.}
		\label{Table:IC_Oracle}
		\begin{center}
			\scalebox{0.95}{\begin{tabular}{ 			 |p{1cm}p{1cm}p{1cm}||p{1.5cm}p{1.5cm}||p{1.5cm}p{1.5cm}|}
					\hline
					
					& & & 
					\multicolumn{2}{c||}{Oracle ($\kappa = 1$)} &
					\multicolumn{2}{c|}{Oracle ($\kappa = 2$)} \\
					
					\cline{4-7}%
					
					{$|V|$}  &  { $\bar b$} & { $\epsilon$}
					& {Time } & {Cuts }  & {Time }& { Cuts } 
					\\
					\hline 				
					\multirow{6}{4em}{60} & \multirow{3}{4em}{1}  & 0.0125 & $\ge3600$& 39844 & 90  & 4357 \\	
					&   & 0.025 & $\ge3600$  & 44387 & 12& 1404 \\ 				
					&   & 0.05 & $\ge3600$ & 51004 & 15  & 1295  \\

					& \multirow{3}{4em}{100}  & 0.0125 & $\ge3600$ & 49158 & 292  & 6903 \\ 				
					&   & 0.025 & $\ge3600$& 46392 & 320  & 8075 \\	
					&   & 0.05 & $\ge3600$& 49895  & 257 & 7161 \\ 		
					\hline
					\multirow{6}{4em}{90} & \multirow{3}{4em}{1}  & 0.0125 & $\ge 3600$& 45913 & 49  & 2758 \\	
					&   & 0.025 & $\ge 3600$  & 45627 & 387& 8072 \\ 				
					&   & 0.05 & $\ge 3600$ & 46594 & 1209  & 12894  \\
					& \multirow{3}{4em}{100}  & 0.0125 & $\ge 3600$  & 42648 & $\ge 3600$  & 25178\\ 				
					&  & 0.025 & $\ge 3600$  & 42351 & $\ge 3600$  & 25253 \\	
					&  & 0.05 & $\ge 3600$  & 40234 & $\ge 3600$& 24900  \\ 						 				
					\hline%

			\end{tabular}}
	\end{center}}
\end{table}	
}

To solve the problem for  networks with larger sizes (i.e., $|V|>100$), we consider the sampling-based approach that exploits the submodular substructure of PPSC. We demonstrate the usage of oracle for checking and fixing the feasibility of the solution given by the sampling-based approach. \revised{ First, we run the sampling-based methods on the instances with $|V|=60$ used in \tabref{Table:IC_Oracle} so that we can compare the feasible solution obtained at the end of the sampling-based method with the truly optimal solution obtained by the exact method.  
In our preliminary computational study, consistent with our observations with the exact method, we see that the instances with $\bar b = 100$ are harder to solve than the instances with $\bar b = 1$. Hence,  for the instances with $\bar b = 1$, we generate $|\Omega| = \{100,500,1000\}$ equiprobable scenarios, and for  $\bar b = 100$, we generate fewer equiprobable  scenarios ($|\Omega| = \{100,250,500\}$).} For each combination of $(|V|,\bar b,\epsilon,|\Omega|)$, we create three replications of the scenario set and report the average statistics.  We consider the sampling-based delayed constraint generation method (Algorithm \ref{alg:benders}), which is referred to as ``DCG" in this subsection. Recall that Algorithm \ref{alg:benders} is executed in two phases, the Benders phase, and the oracle phase. In the Benders phase, we apply two types of feasibility cuts, submodular inequality \eqref{eq:sub_cut} (referred to as DCG-Sub) and new valid inequality \eqref{eq:CIneq_stronger} (referred to as DCG-NV), to RMP \eqref{eq:PPSC-sub-Master}. In the oracle phase, we check whether the optimal solution to  the sample approximation problem, $\bar x$, obtained at the end of the Benders phase of DCG, is feasible for the original problem, by using the polynomial-time DP described in Section \ref{sec:dp}.  We use Algorithm \ref{alg:nos_PSC} with $\kappa=2$ in these experiments to add feasibility cuts  \eqref{eq:LLcut_strong} to RMP \eqref{eq:PPSC-sub-Master}.  We also consider the deterministic equivalent problem \eqref{eq:sample-DEP} using the linear representation of the chance constraint (referred to as DEP  \eqref{eq:sample-DEP}). In the case of DEP  \eqref{eq:sample-DEP}, once the sample approximation problem is solved to obtain an optimal solution  $\bar x$ to the sample approximation problem,  we also enter an oracle phase, where we check feasibility by using the polynomial-time DP described in Section \ref{sec:dp} as an oracle. If the current solution is not feasible with respect to the true distribution, then we add inequality \eqref{eq:LLcut_strong} with $\kappa(J_0) \le 2$ to the corresponding deterministic equivalent formulation and re-solve. We repeat this process until a feasible solution is obtained. \revised{The feasible solution  obtained at the end of the oracle phase may not be optimal with respect to the true distribution. For the instances for which the truly optimal solution is available (from \tabref{Table:IC_Oracle}), we provide information on the optimality gap of the  feasible solution. }

\revised{

	\begin{sidewaystable}[htp]
	{ \caption{Networks with $|V|=60$ for PPSC with the independent probability coverage model-Sampling.} 
		\label{Table:IC-table1}
		\centering 
		\scalebox{0.7}{\begin{tabular}{ |p{0.5cm}p{0.7cm}p{0.7cm}||p{1cm}p{1cm}p{1cm}p{1.2cm}|p{0.5cm}p{1cm}p{0.6cm}p{0.5cm}p{1.2cm}|p{1cm}p{1cm}p{1cm}p{1.2cm}|p{0.6cm}p{1cm}p{0.7cm}|p{1cm}p{1cm}p{1.2cm}|p{0.5cm}p{1cm}p{0.6cm}|  }
				\hline
				
				&	& & \multicolumn{9}{c|}{   DCG-NV } & \multicolumn{7}{c|}{   DCG-Sub } & \multicolumn{6}{c|}{ DEP \eqref{eq:sample-DEP} } 		
				\\		
				\cline{4-25}%
				&	& & \multicolumn{4}{c|}{   Master}& \multicolumn{5}{c|}{   Oracle } & \multicolumn{4}{c|}{  Master }& \multicolumn{3}{c|}{   Oracle } & \multicolumn{3}{c|}{  DEP }& \multicolumn{3}{c|}{  Oracle  } \\

				{$\bar b$} &	{$\epsilon$} & { $|\Omega|$}  
				& {Time(u) }& {Cuts} & {Nodes}  & {Gap(\%)} & {Inf} & {Time(u)} & {Cuts} & {nOpt} & {oGap(\%)}  &   {Time(u) }& {Cuts} & {Nodes}  & {Gap(\%)} & {Inf} & {Time(u)} & { Cuts } &   {Time(u) } & {Nodes}  & {Gap(\%)} & {Inf} & {Time(u)} & { Cuts }
				\\				
				\hline
				\multirow{9}{4em}{1}&\multirow{3}{4em}{0.0125}&100  &$\le 1$  & 369& 506 &0&3& 4 & 148&0&0 &    2&601&871&0&3&5&121&  21&1304&0&3&34& 102 \\
				& & 500 &	15 & 1010 & 4648 & 0 & 3 & 50 & 110&0&0 &  36 &2034& 8672&0&3&30&81 &	462& 3070&0&3&2441(2)& 9\\
				& & 1000 & 473 & 2408 & 74345 &0 & 1 & 110 &119&0&0 &	909&4650& 131859 &0& 1 &95&56& 2794(2)&8244&27.78&1&(1)&3	\\
				& \multirow{3}{4em}{0.025} & 100 & 2 & 404 &1079&0& 2 & 2 & 16&0&0& 3& 600 &1874& 0& 2& 3 & 30&  16&1006&0&2&63&27 \\
				&  & 500 &110& 1410&31559& 0 & 2 & 27& 1&0&0&  201& 2314&45543&0&1&31&1&	1386&5780& 0&1&2489&9\\
				&  & 1000 & 1678& 2565& 218824&0& 1& 290 & 1&0&0&  1563&4612&231118&0 &3&124&3&2012(1)&3687&30.83&0&-&-\\
				& \multirow{3}{4em}{0.05} & 100 & 5& 451& 3301 &0& 3&24 &135&0&0& 9& 638& 4954& 0 &3&25&142& 26&1123&0&3&233&34\\
				&  & 500 &1985& 2420&689696& 0& 0& -&-&0&0&1301(1)& 4189& 510209&23.2 &0 &-&-&1507&8646&0&0&-&- \\
				&  & 1000&1633(2) & 4553&567985& 26.3& 0& - & -&0&0&(3) & 7586&323825 & 36.35&-&-&- &2050(1)&3996&34.21&0&-&- \\\cline{1-25}
				
				\multirow{9}{4em}{100}&\multirow{3}{4em}{0.0125}&100&$\le 1$&318&348&0&3 &3&119&1&1.01&    $\le 1$&496&710&0&3&3&150 &12&581&0&3& 31& 41 \\
				& & 250&5&550&1795&0&3&6&56&0&0&  9&851&3447&0&3&12&88 &68&1831&0&3&128&23\\
				& & 500&37&954&10406&0&3&24&47&0&0&  78&1495&19342&0&3&16&74&499 &4288&0&3&1636&14 \\
				& \multirow{3}{4em}{0.025} & 100&$\le 1$&328&802&0&3 &2&5&2&5.74&  2&481&1537&0&2&2&29& 20&1010&0&2&40&9\\
				&  & 250 &16&620&6439&0&3 &11&3&2&3.45& 24&1009&10158&0&3&7&5&252 &5271&0&3&427&4\\
				&  & 500& 190&1077&46666&0&2 &19&5&2&3.45& 291&1778&79243&0&2&19&6& 928&6354&0&2&2895&1\\
				& \multirow{3}{4em}{0.05} & 100 &6&347&3828&0&3 &10&63&1&3.57& 8&584&6376&0&3&11&56&34 &1432&0&3&175&24\\
				&  & 250 &147&701&59203&0&2&31&38&1&1.19& 130&1171&61976&0&2&21&38&499 &9027&0&2&1173&13\\
				&  & 500 & 1463&1572&468709&0&2&43&7&1&1.19& 2757&2464&756766&0&2&95&11&1992(2) &11133&18.04&1&(1)&1\\
				\hline

	\end{tabular}}}
\end{sidewaystable}

In \tabref{Table:IC-table1}, we compare the performances of DCG-NV, DCG-Sub, and DEP \eqref{eq:sample-DEP} and demonstrate the utility of the oracle in the sampling-based delayed constraint generation algorithm for instances for which we are able to find the optimal solution to the true problem with the exact method.  Column ``Master" reports the statistics pertaining to the Benders phase of DCG, and column ``DEP" denotes the deterministic equivalent problem \eqref{eq:sample-DEP}.  Column ``Oracle" reports the statistics pertaining to the  oracle phase of DCG and  DEP  \eqref{eq:sample-DEP}. Note that we set a one hour time limit for both Master and DEP, and for  the oracle phase. In ``Master,"  ``DEP" and ``Oracle" columns, ``Time(u)" denotes the solution time in seconds and notation ``(u)" denotes the number of unsolved instances out of the three instances tested for the corresponding setting. In ``Master," column ``Cuts" denotes number of inequalities \eqref{eq:sub_cut} and \eqref{eq:CIneq_stronger} added to RMP \eqref{eq:PPSC-sub-Master} for DCG-Sub  and DCG-NV, respectively. In ``Oracle," column ``Cuts" denotes number of inequalities \eqref{eq:LLcut_strong} added to RMP \eqref{eq:PPSC-sub-Master} in the oracle phase. Column ``Nodes" denotes the number of branch-and-bound nodes traced in the Benders phase. For the instances that do not solve within the time limit, column ``Gap" reports the end gap given by $(ub-lb)/ub$, where $ub$ is the objective function value of the best feasible integer solution obtained within the time limit and $lb$ is the best lower bound available within the time limit for the sample approximation problem. We use the oracle phase to check and fix the infeasibility of the solution provided by the Benders phase.  Column ``Inf" denotes the number of instances, among the instances for which an optimal solution was found in the Benders phase, that provides a solution detected to be infeasible by the oracle. Note that we do not enter the oracle phase unless an optimal solution is found by the Benders phase. Hence, if none of the instances are solved to optimality in the Benders phase, we put a dash (-) under the relevant statistics of the oracle phase. In addition, if all solutions found at the end of the Benders phased are deemed feasible by the oracle phase (indicated by Inf = 0), then we put a dash (-) under the ``Time(u)" and ``Cuts" columns, because the time to confirm that the given solution is feasible is negligible and no oracle cuts are added in this case.    If the oracle phase  cannot be completed within the one-hour time limit due to the multiple MIPs that need to be solved after detecting infeasibility and adding no-good cuts, then we report    the number of unsolved instances out of the  number of instances tested in the oracle phase (given by ``Inf") in parentheses ``(u)." Recall that for the instances with $|V|=60$, we are able to obtain the truly optimal solution from the exact method. Thus, for these instances, we are able to calculate the actual gap between the objective function value of the feasible solution given by the sampling-based approach and the truly optimal value given by the exact method. Column ``nOpt" denotes the number of instances out of three that do not have the same optimal objective value obtained from the exact method. Column ``oGap" denotes the optimality gap between the optimal value given by DCG and the true optimal value given by Oracle ($\kappa = 2$) calculated as $100|(v-v^*)|/v$, where 
 $v$ is the objective function value of the feasible solution obtained from DCG and $v^*$ is the optimal objective function of the truly optimal solution found in \tabref{Table:IC_Oracle}.  We observe that if an instance is solvable within the time limit, all three methods, DCG-NV, DCG-Sub and  DEP  \eqref{eq:sample-DEP}, provide the same objective value at the end of the oracle phase. Therefore, we only show ``nOpt" and ``oGap" for DCG-NV, which is able to solve  most of the instances within the time limit.

First, we compare Tables \ref{Table:IC_Oracle} and \ref{Table:IC-table1}. We note that for  instances with $|V|=60$, the exact method using the true distribution can solve most problems faster than the sampling-based method that approximates the true problem with the  number of scenarios $|\Omega|\ge 500$. For example, the  solution time for the setting with $|V|=60, \bar b=1, \epsilon=0.05$
with the exact method, Oracle ($\kappa = 2$), is 15 seconds, but the average solution time with the sampling-based method DCG-NV  is 1985 seconds for 500 scenarios and two instances hit the time limit for 1000 scenarios. These results demonstrate that, for smaller networks, an exact method may be able to solve the problem to true optimality more efficiently  than a sampling-based method, which is not able to guarantee optimality. 
}

\revised{Next, we compare the performance of DCG-NV, DCG-Sub, and DEP \eqref{eq:sample-DEP} for problems with $|V|=60$. \tabref{Table:IC-table1}  shows that the solution time increases as  $\epsilon$ and $|\Omega|$ increase. We also note that the problems with non-unit costs are generally harder to solve.  Comparing the solution times, we observe that both versions of DCG (with submodular cuts, or with the new valid inequalities) are generally faster than DEP. In addition, for the instances that DCG can provide an optimal solution within the time limit, DCG-NV is faster than DCG-Sub in most cases. In addition, the columns ``Cuts" and ``Nodes" show that DCG-NV adds fewer user cuts and traces fewer branch-and-bound nodes than DCG-Sub in most cases. As a result,  inequality \eqref{eq:CIneq_stronger}, which we prove to be a   stronger inequality than inequality \eqref{eq:sub_cut} (Proposition \eqref{prop:stronger}), improves the computational performance of DCG.}

\revised{Next, we demonstrate the usage of oracle for checking and fixing the feasibility of the solution given by the sampling-based approach. In Oracle, Inf $=0$ denotes that there the solution provided at the end of the Benders phase is feasible. The instances that require feasibility cuts (indicated by a positive number in the Inf column) show that although the optimal solution for the sample approximation problem, provided by the Benders phase in DCG-Sub or DCG-NV or by DEP  \eqref{eq:sample-DEP}, is not feasible with respect to the true distribution, the oracle phase fixes the infeasibility in most cases. There are two cases for the DEP  \eqref{eq:sample-DEP} based method where the oracle phase hits the time limit and hence cannot provide a feasible solution.}  As expected, a larger number of scenarios better represents the true distribution and in general leads to an increased number of feasible solutions that do not require the oracle phase, although there are exceptions. \revised{ In general, if an instance has a large number of scenarios and an infeasible solution is detected, then the oracle phase spends more time on finding a feasible solution. There is no obvious trend between risk level $\epsilon$ and the number of added oracle cuts. 

Regarding the optimality gap due to solving a sample approximation problem instead of the true problem, from nOpt and oGap columns, comparing the optimal solutions provided by the exact method  with those of the sampling-based method  for the test instances in Tables  \ref{Table:IC_Oracle} and \ref{Table:IC-table1}, we observe that  the instances with unit cost attain true optimality with zero oGap in \tabref{Table:IC-table1} at the end of the oracle phase. In most cases, the solution found at the end of the Benders phase is not feasible and it is corrected during the oracle phase, which leads to feasible solutions. While the sampling-based method  only guarantees a feasible solution to the original problem at the termination of the oracle phase, in this set of experiments, all solutions for the unit cost instances   turn out to be optimal. We suspect that this 0\% gap occurs because the objective function has
		the same coefficient  for all the variables, and so there is a large number of solutions - some feasible, some infeasible - with the same objective function value.   For the instances with $\bar b=100$, we observe that oracle plays an important role in fixing the infeasibility in all instances and only two settings $(\bar b, \epsilon, |\Omega| ) = (100,0.0125,250)$ and $(100,0.0125,500)$ have zero oGap. In most cases, the feasible solutions for non-unit cost instances are suboptimal with an optimality gap of up to 5.74\%.
}

\revised{In \tabref{Table:IC-table2}, we report our experiments with $|V|=120$ for which we are not able to obtain a truly optimal solution using the exact method we proposed. Comparing Tables \ref{Table:IC-table1} and \ref{Table:IC-table2}, we see that the problems are harder for the sampling-based method  when the number of nodes increases.   The solution time and the number of branch-and-bound nodes increase drastically as $|V|$ increases for  most instances. For example, the setting $(|V|,\bar b, \epsilon, |\Omega| ) = (60,100,0.025,500)$ takes 190 seconds  and 46666 branch and bound nodes to solve on average, whereas the setting $(|V|,\bar b, \epsilon, |\Omega| ) = (120,100,0.025,500)$ takes   1581 seconds and 277612 branch-and-bound nodes on average.  \tabref{Table:IC-table2} shows the same trend as \tabref{Table:IC-table1}, where  the Benders phase of DCG  outperforms DEP with respect to solution time. For these instances, DEP \eqref{eq:sample-DEP} cannot solve half of the instances within the time limit, whereas both DCG-NV and DCG-Sub solve most instances to optimality. In addition, DCG-NV runs faster than DCG-Sub. We also demonstrate the usage of oracle for checking and fixing the feasibility of the solution given by Master and DEP. The results show that in  almost all  settings, except for three,    an infeasible solution is provided by the sampling-based approach. Therefore, it is important to use the oracle phase to fix the infeasibility. The number of infeasible solutions decreases as the number of scenarios increases, however, increasing sample size to reduce the infeasibility issues slows down the solution time of both the Benders and the oracle phases of DCG. 
As a result, there is a tradeoff between the solution time and the solution accuracy. 
	
	\begin{sidewaystable}[htp]
	{ \caption{Networks with $|V|=120$ for PPSC with the independent probability coverage model-Sampling.} 
		\label{Table:IC-table2}
		\centering 
		\scalebox{0.7}{\begin{tabular}{ |p{0.5cm}p{0.7cm}p{0.7cm}||p{1cm}p{1cm}p{1cm}p{1.2cm}|p{0.5cm}p{1cm}p{0.6cm}|p{1cm}p{1cm}p{1cm}p{1.2cm}|p{0.5cm}p{1cm}p{0.6cm}|p{1cm}p{1cm}p{1.2cm}|p{0.5cm}p{1cm}p{0.6cm}|  }
				\hline
				
				&	& & \multicolumn{7}{c|}{   DCG-NV } & \multicolumn{7}{c|}{   DCG-Sub } & \multicolumn{6}{c|}{ DEP \eqref{eq:sample-DEP} } 		
				\\		
				\cline{4-23}%
				&	& & \multicolumn{4}{c|}{   Master}& \multicolumn{3}{c|}{   Oracle } & \multicolumn{4}{c|}{  Master }& \multicolumn{3}{c|}{   Oracle } & \multicolumn{3}{c|}{  DEP }& \multicolumn{3}{c|}{  Oracle  } \\

				{$\bar b$} &	{$\epsilon$} & { $|\Omega|$}  
				& {Time(u) }& {Cuts} & {Nodes}  & {Gap(\%)} & {Inf} & {Time(u)} & { Cuts } &   {Time(u) }& {Cuts} & {Nodes}  & {Gap(\%)} & {Inf} & {Time(u)} & { Cuts } &   {Time(u) } & {Nodes}  & {Gap(\%)} & {Inf} & {Time(u)} & { Cuts }
				\\				
				\hline
				\multirow{9}{4em}{1}&\multirow{3}{4em}{0.0125}&100  & $\le 1$ & 695&604 & 0& 2&7&101& $\le 1$&871&885&0&2&7&114 &43&1099&0&2&72&8\\
				& & 500& 180&4069&29550&0&0&-&-&255&6187&31508&0&0&-&-&(3)&4645&26.93&-&-&-\\
				& & 1000 & 859&6083&84757&0&0&-&-&1740&9969&126428&0&0&-&-&(3)&1514&36.83&-&-&-\\
				& \multirow{3}{4em}{0.025} & 100 &3&836&1192&0&3&13&114&3&849&1496&0&3&21&134& 39&620&0&3&171&17\\
				&  & 500 & 866& 7084&100719&0&0&-&-&1206&9151&114554&0&0&-&-&(3)&5270&31.35&-&-&-\\
				&  & 1000 &(3)&11827&128201&39.59&-&-&-&(3)&16492&83493&42.58&-&-&-&(3)&1281&37.37&-&-&-\\
				& \multirow{3}{4em}{0.05} & 100 &17&954&7141&0&3&18&1&18&1282&8112&0&2&13&6& 72 &1033&0&3&77&7\\
				&  & 500  & 2426&4138&246194&0&2&1674&14&2461(1)&5139&354775&20&1&895&1&2415&2079&0&3&3110(1)&3\\
				&  & 1000 &2650(2)& 8669&192779&30.61&1&(1)&5&(3)&9952&225870&27.45&-&-&-&(3)&955&36&-&-&-\\\cline{1-23}

				\multirow{9}{4em}{100}&\multirow{3}{4em}{0.0125}&100&2&778&843&0&3&11&479&    3&1070&1372&0&3&20&608 &38 &1166&0 &3&168&72\\
				& & 250& 23&1567&6141&0&2&46&26& 47&2490&9305&0&2&106&100& 549 &3439&0&2&1325&11 \\
				& & 500&  162&2348&27594&0&3&203&39& 326&3823&42848&0&3&413&43& (3) &5492&12.61&-&-&-\\
				& \multirow{3}{4em}{0.025} & 100&  4&872&1944&0&3&18&233& 5&1058&2353&0&3&40&503& 36 &1002&0&3&220&65\\
				&  & 250 & 152&1634&44208&0&2&557&117& 199&2367&49445&0&2&600&217&1477(1) &9610&7.12&2&1723(1)&12\\
				&  & 500&  1581&2671&277612&0&2&1806(1)&35& 2561&4533&315038&0&2&1591&23&(3) &4732&19.87&-&-&-\\
				& \multirow{3}{4em}{0.05} & 100 &17&865&8679&0&3&123&393& 36&1240&11537&0&3&271&507& 197&3775&0&3&661&64\\
				&  & 250 &732&1903&212938&0&2&1440(1)&209& 1165&2503&235058&0&2&(2)&148& 3412(2)&15690&6.38&2&(2)&5\\
				&  & 500 & (3)&3216&440073&17.68&-&-& -& (3)&4457&305875&22.8&-&-&-&(3) &5760&24.01&-&-&-\\
				\hline		
	\end{tabular}}}
\end{sidewaystable}

In  \tabref{Table:IC_Rep}, we investigate the quality of the solution obtained by our proposed method for the instances with $|V|=120$ that are only solvable by the sampling-based approach. Because we do not have the true optimal solution, we cannot provide  exact deterministic optimality gaps. However, we use the approximate method proposed in \cite{Luedtke2008} to estimate the optimality gaps with statistical guarantees. In particular, we use Theorem 4 of \cite{Luedtke2008}, with $L=1, \alpha=\epsilon$. 
Let $\mathcal{M}$ be the number of replications of the sample approximation problems. In our computational study, for each choice of parameters in  \tabref{Table:IC-table2}, we have $\mathcal{M} = 3$ sample approximation problems. We obtain the optimal objective values of these $\mathcal{M} $ sample approximation problems solved by DCG-NV and report the minimum and maximum among these replications under the ``Master" column. Under the column ``Oracle," we  report the minimum and maximum objective function value of the feasible  solution provided by  the oracle phase. Note that if an objective value provided at the end of  the Benders phase is feasible, then the Benders and oracle phases share the same objective value for the corresponding instance. In \tabref{Table:IC_Rep}, we only report the instances that have at least two out of three sample approximation problems solvable by DCG-NV. (i.e., settings $(\bar b,\epsilon, |\Omega|) = (1,0.025,1000)$, $(1,0.05,1000)$ and $(100,0.05,500)$ are not reported). Column ``EGap" denotes the estimated gap that is equivalent to $(\overline{ub}-\overline{lb})/ \overline{ub}$, where $\overline {ub}$ is the best upper bound obtained by the oracle phase (i.e., Min value under Oracle), and  $\overline {lb}$ is the lower bound obtained from the Min value of Master. Luedtke and Ahmed \cite{Luedtke2008} show that for $|\Omega|\epsilon$ large enough so that a normal approximation to a binomial distribution is appropriate (e.g., $|\Omega|\epsilon\ge 5$), the lower bound obtained by taking the minimum of optimal objective function values of the sample approximation problems among the $\mathcal M$ replications   provides a valid lower bound with probability approximately $1-(0.5)^\mathcal M$. Using our method, we are now also able to give a deterministic upper bound on the optimal objective function value. Therefore, the gap reported under the EGap column is the estimated optimality gap with approximately 87.5\% confidence. We put ``*" on the EGap that for settings that do  not satisfy $|\Omega|\epsilon \ge 5$ in which case the approximate probabilistic guarantee  on the estimated gap is not valid.

From \tabref{Table:IC_Rep}, we observe that  Min/Max values under the  Master and Oracle columns are non-decreasing as the number of scenarios increases. For most of the instances with $(\bar b,|\Omega|) = (100,100)$, both minimum and maximum objective function  values  obtained at the end of the Benders phase are smaller than those obtained at the end of the oracle phase. For these cases, the Benders phase cannot provide even a feasible solution as can be seen from   \tabref{Table:IC-table2}, with Inf = 3. Note that even if  minimum objective function value  obtained at the end of the Benders phase is the same as that obtained at the end of the oracle phase,   the solution given by the Benders phase may not be feasible. For example, the setting $(\bar b,\epsilon, |\Omega|) = (1,0.05,100)$ has the  same objective function value in both phases. However, the corresponding instance in DCG-NV of \tabref{Table:IC-table2} has Inf = 3, which indicates that none of the solutions provided by the Benders phase is feasible. From EGap(\%), we observe that as we increase the sample size  $|\Omega|$, we obtain a  tighter gap between the feasible upper bound provided by the oracle phase and the lower bound of the optimal value provided by the Benders phase.  For the instances with unit costs and $|\Omega|\epsilon \ge 5$, we have a zero gap, which means that DCG-NV provides a truly optimal solution with probability at least 0.875. For non-unit cost instances with $|\Omega| \ge 250$, we are able to provide 1.75\%, 6.06\%, and 6.45\% EGaps with confidence approximately 0.875 for the settings  $(\bar b,\epsilon) = (100,0.0125)$, (100,0.025) and (120,100,0.05), respectively. Therefore, there is a trade-off between the scenario size, solution time and solution quality. If we use a small sample size, the solution time of the master problem is shorter. While in most cases, small sample size leads to infeasible solutions, because the resulting MIP is smaller, the oracle phase also takes a shorter time to fix the infeasibility. Overall, less time is spent in finding a feasible solution, however the quality of the solution may not be as good as that of a solution obtained by using a larger number of scenarios.

In the final part of this experimental subsection, we also test the influence of a larger size of $|V_2|$ compared to  $|V_1|$ in \tabref{Table:IC-table3}. To test instances with $|V_2|>|V_1|$, we follow the same experimental scheme as in \tabref{Table:IC-table1} but we increase $|V_2|$ from 30 to 60.  We observe that DCG can solve most instances within an hour for both Master and Oracle, and DCG-NV  outperforms DCG-Sub.   Compared to DCG, DEP \eqref{eq:sample-DEP} includes more constraints and additional $y_i^{\omega}$ variables for $i \in V_2$ and $\omega \in \Omega$. Therefore increasing $|V_2|$ increases the solution time of DEP significantly as can be seen by comparing the solution times in Tables \ref{Table:IC-table1} and \ref{Table:IC-table3}.  However, the sizes of the problems solved by DCG depend only on $|V_1|$, therefore the solution times of DCG are not sensitive to $|V_2|$. There is a slight increase in the time to check feasibility using the DP, however, this time is negligible.  Furthermore, we are also able to use the exact method, Oracle ($\kappa = 2$), to solve each instance optimally since the search space $|V_1|$ is still equal to 30. Hence, as in \tabref{Table:IC-table1}, we compare the optimality gap (oGap) between the optimal value given by DCG-NV and the optimal value given by the exact method for the instances in \tabref{Table:IC-table3}. The results show the same trend that the oracle phase  fixes the infeasibility of a solution obtained from a sample approximation, but the resulting feasible solution is not necessarily  optimal. For example,  for the non-unit cost instances we obtain solutions that have up to 8.54\% optimality gap with respect to the truly optimal solution.

In conclusion, for instances with small $|V_1|$, it is computationally tractable to obtain a truly optimal solution using the exact method Oracle ($\kappa = 2$). For larger $|V_1|$, an approximate sampling-based method is more effective than an exact method. Furthermore, solving the sample approximation problem with delayed cut generation (DCG) is more effective than solving the corresponding DEP model. The new valid inequalities enhance the performance of DCG when compared to using submodular inequalities. While  methods that rely on only solving a sample approximation problem cannot guarantee a feasible solution, our oracle-based method  combined with the statistical lower bounds of \cite{Luedtke2008} provides provably feasible solutions to the true problem with low optimality gaps with high confidence.  }

\revised{	
	
		\begin{table}[htb]
	{ \caption{ Solution quality  of DCG-NV for networks with $|V|=120$.}
		\label{Table:IC_Rep}
		\begin{center}
			\scalebox{0.95}{\begin{tabular}{ 			 |p{1cm}p{1cm}p{1cm}||p{1.5cm}p{1.5cm}||p{1.5cm}p{1.5cm}p{1.5cm}|}
					\hline
					
					& & &
					\multicolumn{2}{c||}{Master} &
					\multicolumn{3}{c|}{Oracle } \\
					
					\cline{4-8}%
					
					{ $\bar b$} & { $\epsilon$} & { $|\Omega|$}
					& {Min} & {Max }  & {Min }& { Max } & {EGap(\%)}
					\\
					\hline 		
					\multirow{7}{4em}{1}  & \multirow{3}{4em}{0.0125} & 100 & 5& 6 & 6  & 6 & $16.7^*$\\	
					&  & 500 & 6& 6 & 6  & 6 & 0\\	
					&  & 1000 & 6& 6 & 6  & 6 &  0\\							
					& \multirow{2}{4em}{0.025} & 100 & 5& 5 & 6  & 6 & $16.7^*$\\	
					&  & 500 & 6& 6 & 6 & 6 & 0\\

					& \multirow{3}{4em}{0.05} & 100 & 5& 5 & 5  & 5 & 0\\	
					&  & 500 & 5& 5 & 5 & 5 & 0\\	
					\hline

					\multirow{8}{4em}{100} & \multirow{3}{4em}{0.0125} & $100$ & 157& 160 & 171  & 172 & $8.19^*$\\	
					&  & $250$ & 163& 172 & 171  & 172 & $4.68^*$\\	
					&  & 500 & 168& 175 & 171  & 178 & 1.75\\							
					
					& \multirow{3}{4em}{0.025} & $100$ & 148  & 156 & 165& 167 &  $10.3^*$\\ 				
					&  & 250& 155 & 165 & 165  & 165  & 6.06\\
					&  & 500& 155 & 165 & 165  & 165  & 6.06\\

					& \multirow{2}{4em}{0.05} & 100 & 140  & 146 & 155& 155 &  9.68\\ 				
					&  & 250& 145 & 155 & 155  & 155  & 6.45\\	 					 				
					\hline%

			\end{tabular}}
	\end{center}}
\end{table}	

\begin{sidewaystable}[htp]
	{ \caption{Networks with $|V_1|=30$ and $|V_2|=60$ for PPSC with the independent probability coverage model.} 
		\label{Table:IC-table3}
		\centering 
		\scalebox{0.7}{\begin{tabular}{ |p{0.5cm}p{0.7cm}p{0.7cm}||p{1cm}p{1cm}p{1cm}p{1.2cm}|p{0.5cm}p{1cm}p{0.6cm}p{0.5cm}p{1.2cm}|p{1cm}p{1cm}p{1cm}p{1.2cm}|p{0.5cm}p{1cm}p{0.6cm}|p{1cm}p{1cm}p{1.2cm}|p{0.5cm}p{1cm}p{0.6cm}|  }
				\hline
				
				&	& & \multicolumn{9}{c|}{   DCG-NV } & \multicolumn{7}{c|}{   DCG-Sub } & \multicolumn{6}{c|}{ DEP \eqref{eq:sample-DEP} } 		
				\\		
				\cline{4-25}%
				&	& & \multicolumn{4}{c|}{   Master}& \multicolumn{5}{c|}{   Oracle } & \multicolumn{4}{c|}{  Master }& \multicolumn{3}{c|}{   Oracle } & \multicolumn{3}{c|}{  DEP }& \multicolumn{3}{c|}{  Oracle  } \\

				{$\bar b$} &	{$\epsilon$} & { $|\Omega|$}  
				& {Time(u) }& {Cuts} & {Nodes}  & {Gap(\%)} & {Inf} & {Time(u)} & {Cuts} & {nOpt} & {oGap(\%)}  &   {Time(u) }& {Cuts} & {Nodes}  & {Gap(\%)} & {Inf} & {Time(u)} & { Cuts } &   {Time(u) } & {Nodes}  & {Gap(\%)} & {Inf} & {Time(u)} & { Cuts }
				\\				
				\hline
				\multirow{9}{4em}{1}&\multirow{3}{4em}{0.0125}&100 & $\le 1$ &459 &265 &0 &3 &2 &130&0&0 &3 &1097 &892  &0 &3 &4 &68  & 23  &502 &0 &3 &80 &9  \\
				& & 500& 75 &2430 & 17033&0 &0 &- &-&0&0 &125 &4813 &19987 &0 &0 &-  &-  &  2775(2)  &5004 &34.7 &0 &- &-\\
				& & 1000 & 390 &4435 &51940 &0 &0 &- &-&0&0 &863 &8722 &73915 &0 &0 &-  &-   &  (3) &2142 &31.17 &- &- &-\\
				& \multirow{3}{4em}{0.025} & 100& 2 &492 &1056 &0 &3 &11 &157&0&0 &2 &937 &1274  &0 &3 &16  &134  &  16 &302 &0 &3 &191 &18 \\
				&  & 500&  291  &4322 &59958 &0 &0 &- &-&0&0 &584 &8832 &63429  &0 &0 &-  &-   & 3362(2)&9031 &28.94 &0 &- &-\\
				&  & 1000&  2684(1)  &8951 &294443 &29.42 &0 &- &-&0&0 &(3) &17066 &122811 &35.40 &- &-  &-  &  (3) &1784 &32.12 &- &- &-\\
				& \multirow{3}{4em}{0.05} & 100& 18 &824 &7867 &0 &3 &8 &13&0&0 &18 &1186 &10361  &0 &3 &13  &1  &50  &917 &0 &1 &103 &19\\
				&  & 500 & 618 &2822 &101165 &0 &3 &439 &18&0&0 &1064 &5266 &126920 &0 &2 &512  &35   & 936(1) &1260 &29.73 &0 &- &- \\
				&  & 1000 &795(1)& 5526&175280 &20 &0 &- &-&0&0&1350(1) &11248 &134195  &27.5 &2 &(2) &1  &  (3) &1226 &32.8 &- &- &- \\\cline{1-25}
				
				\multirow{9}{4em}{100}&\multirow{3}{4em}{0.0125}&100&  $\le 1$  &386 &280 &0 &3 &3 &139&1&1.19&2 &703 &546 &0 &3 &4 &139   & 8 &242 & 0&3 &65 &57\\
				& & 250 &  5 &800 &1700 &0 &3 &14 &59&0&0 &11 &1538 &3056 &0 &3 &16 &75   &151  &1538 &0 &3 &545 &20\\
				& & 500&  48 &1374 &9657 &0 &2 &49 &43&0&0 &85 &2644 &17354  &0 &2 &54  &63  & 1630  &4533 &0 &2 &(2) &6\\
				& \multirow{3}{4em}{0.025} & 100 &  $\le 1$  &347 &518 &0 &3 &2 &65&1&8.54 &$\le 1$ &770 &908 &0 &3 &3  &42   & 24 &699 &0 &3 &49 &11\\
				&  & 250 & 15   &873 &4835 &0 &2 &39 &34&0&0 &15 &1368 &5559 &0 &2 &26 &91  & 360  &2333 &0 &2 &756 &8\\
				&  & 500&  155  &1462 &36737 &0 &2 &95 &27&1&1.33 &275 &2880 &63236  &0 &2 &63  &57   & 1899(1) &5700 &7.26 &1 &(1) &2\\
				& \multirow{3}{4em}{0.05} & 100 & 3 &459 &2223 &0 &2 &6 &10&2&6.06 &4 &784 &2358 &0 &2 &7  &9   & 48 &1076 &0 &3 &84 &2\\
				&  & 250&  27 &865 &12720 & 0& 3 &49 &13&2&6.06 &51 &1387 &18840  &0 &3 &17  &25  & 735 &4538 &0 &3 &1650 &3\\
				&  & 500 &  511  & 1687& 116886&0 &2 &270 &7&1&6.06 &441 &3167 &107600  &0 &2 &167  &10   & (3) &5490 &14.52 &- &- &-\\
				\hline

	\end{tabular}}}
\end{sidewaystable}
}

\subsection{PPSC under the Linear Threshold Model}

In this subsection, we report our experiments with the linear threshold model. Given a complete bipartite graph, we assign a deterministic weight $a_{ij}$ to each arc $(i, j)$ from all nodes $ i \in V_1$ to all $j \in V_2$. We let \revised{$a_{ij}  = 0.9/|V_1^1| - i/(100  |V_1^1|)$} for each $i \in V_1^1$, and $a_{ij} = (\sum_{i=1}^{|V_1^1|}i/100)/|V_1^2|$ for each $i \in V_1^2$, which satisfies the requirement of the linear threshold model that $\sum_{i:(i,j)\in E}a_{ij}\le 1$.  Recall that in this model, each node $j\in V_2$ has a random threshold drawn from  a uniform distribution [0,1].   We let $ n = m = |V|/2 $, $|V_1^1| = 10$ for all instances, and $|V_1^2| = n - 10$. We consider risk levels \revised{$\epsilon \in \{0.0125,0.025,0.05\}$.}  We set the target $\tau  = 0.6m$.  \revised{The objective function coefficients are set as $b_i \in [1,\bar b]$ for each $i \in V_1$, where $\bar b \in \{1,100\}$. For $\bar b \neq 1$, we set $b_i =i\times \bar b/ |V_1^1|$ for $i \in V_1^1$, and $b_j =(|V_1|-j-1) \times \bar b/ (|V_1^2| \times 2)$ for $j \in V_1^2$.}

For the linear threshold model, $P(x,i) = \sum_{j \in V_1} a_{j,i}x_j$ is used in the DP oracle, $\mathcal{A}(x)$, where $a_{j,i}$ denotes a fixed weight on the arc $(j,i)$. This representation leads to an exact compact mixed-integer linear programming model \eqref{eq:PPSC-LTMIP}. To solve PPSC under the linear threshold model, we apply three methods. We first solve it exactly by using Algorithm \ref{alg:GDCG}, which is referred to as ``Oracle". We only study the cases of Oracle with $\kappa = 2$ in this subsection.  The second method is the deterministic equivalent problem  (DEP \eqref{eq:PPSC-LTMIP}) that uses the true distribution. We use the default setting of CPLEX with a single thread to solve DEP \eqref{eq:PPSC-LTMIP}. The dynamic programming formulation in DEP \eqref{eq:PPSC-LTMIP} computes the actual probability of covered nodes for a given selection from $V_1$ instead of sampling from the true distribution.  \revised{The third method is the sampling-based approach, where we take $|\Omega| = 1000$. We follow Propositions \ref{prop:sample_LT} and \ref{prop:sample_LT2}, and use formulation \eqref{eq:DEP_LT} (referred to as DEP-S \eqref{eq:DEP_LT}), to solve the sample approximation problem.  We summarize a comparison between these three methods in \tabref{Table:LT_oracle}. The proposed smaller formulation \eqref{eq:DEP_LT} has the best performance for our test instances.  Therefore, we no longer compare the performance of the original DCG-NV (or equivalently DCG-Sub) and the larger DEP \eqref{eq:sample-DEP} in $(x,y,z)$-space for the linear threshold models.  In the case of DEP-S \eqref{eq:DEP_LT}, once the sample approximation problem is solved to obtain a solution  $\bar x$,  we  check its feasibility  using the oracle phase. We add feasibility cuts as necessary until a feasible solution is reached. Note, again, that once a feasible solution is obtained at the end of the oracle phase, there is no guarantee that this solution is optimal to the true problem.}

\revised{
We observe that the exact method is only able to solve the instances with $|V| \le 70$. Hence, we only show the results of the instances with $|V| \in \{60,70\}$ in \tabref{Table:LT_oracle}. Column ``Time" denotes the total time of solving DEP-S \eqref{eq:DEP_LT} including the oracle phase. We do not report the solution time for both DEP-S \eqref{eq:DEP_LT} and the oracle phase separately since both phases can solve all instances extremely fast. Column ``Cuts" of DEP-S \eqref{eq:DEP_LT} denotes the number of feasibility cuts \eqref{eq:LLcut_strong} added to DEP-S \eqref{eq:DEP_LT} by the oracle phase. A positive value in Cuts indicates that the optimal solution given by DEP-S \eqref{eq:DEP_LT} is infeasible with respect to the original problem as detected by the oracle. In \tabref{Table:LT_oracle}, Oracle ($\kappa = 2$)  provides solutions for 2 out of 12 the instances. Compared to Oracle, our proposed compact reformulation (DEP \eqref{eq:PPSC-LTMIP}) solves 9 of the 12 instances optimally under the time limit. Note that both Oracle and DEP \eqref{eq:PPSC-LTMIP} solve all instances under the true distribution for this dataset. On the other hand, the sampling-based approach, DEP-S \eqref{eq:DEP_LT} combined with the oracle phase, provides an approximate solution  efficiently.   
 In column oGap, we compare the gap between the truly optimal objective value given by the exact method and the   objective value of the solution provided by DEP-S \eqref{eq:DEP_LT} after the oracle phase. We put ``-" in oGap if the exact method is not able to give the optimal solution within the time limit. 
 For our test instances for which a truly optimal solution is available, we see that the feasible solution found by the sampling-based approach is often optimal, there is only one setting  $(\bar b,\epsilon) = (100,0.0125)$, which has a 2.2\% optimality gap. In this case, the sampling-based approach cannot guarantee the optimality even we use a large number of scenarios such as $|\Omega| = 1000$. Furthermore, for the linear threshold instances with a large number of scenarios, most solutions to the sample approximation problem are feasible, there are only two settings ($(\bar b,\epsilon) = (1,0.0125)$ and (100,0.0125)) where oracle cuts were necessary to correct infeasibility of the solution given by the sample approximation.

\revised{
	\begin{table}[htb]
		{ \caption{The Exact and Sampling Methods for PPSC with the linear threshold model.}
			\label{Table:LT_oracle}
			\begin{center}
				\scalebox{0.75}{\begin{tabular}{ 			 |p{1cm}p{1cm}p{1cm}||p{1.5cm}p{1.5cm}||p{1.5cm}||p{1.2cm}p{1.2cm}p{1.2cm}p{1.2cm}p{1.2cm}|  }
						\hline
						
						& & & 
						\multicolumn{2}{c||}{Oracle ($\kappa = 2$)} & 
						\multicolumn{1}{c||}{DEP \eqref{eq:PPSC-LTMIP}} &
						\multicolumn{5}{c|}{DEP-S \eqref{eq:DEP_LT}}\\
						
						\cline{4-11}%
						
						$|V|$&{$\bar b$}  &  { $\epsilon$} 
						& {Time }& { Cuts }  & {Time } & {Time} & { Inf  } & { Cuts  } & { nOpt } & { oGap(\%) }
						\\
						\hline				
						\multirow{6}{4em}{60}&\multirow{3}{4em}{1} & 0.0125  &  2733 & 20224  & 437 & 2 & 3 & 4 & 0 &0 \\
						& & 0.025  &  1508 & 16083   & 440 &   2 & 0 & 0 & 0 &0\\
						& & 0.05   & $\ge 3600$ & 25203  & 668 & 7 & 0 & 0 & 0 &0  \\
						&\multirow{3}{4em}{100} & 0.0125   & $\ge 3600$ & 33174 & 2090 & 4 & 2 & 2 & 1& 2.2\\
						& & 0.025    &  $\ge 3600$ & 32163 & 1080 & 4 & 0 & 0 & 0 &0 \\			
						& & 0.05   & $\ge 3600$   &35864& 1478 &51 & 0 & 0 & 0 &0\\	
						\hline
						\multirow{6}{4em}{70}&\multirow{3}{4em}{1}  &  0.0125 & $\ge 3600$ & 27858  & 2022 &  $\le 1$ & 0 & 0 & 0 &0\\
						& & 0.025  & $\ge 3600$ & 26977  & 3453 &  2 & 0 & 0 & 0 &0\\
						& & 0.05   & $\ge 3600$ & 24881  & 3315 & 28 & 0 & 0 & 0 &0  \\
						&\multirow{3}{4em}{100} & 0.0125   & $\ge 3600$ & 37440 & $\ge 3600$ & 2 & 0 & 0 & - &- \\
						& & 0.025 &  $\ge 3600$ & 37165 & $\ge 3600$ & 7 & 0 & 0 & - &- \\			
						& & 0.05  & $\ge 3600$   &39837& $\ge 3600$ &55 & 0 & 0 & - &-\\				
						\hline%
				\end{tabular}}
		\end{center}}
	\end{table}
}

Next, we report our experience with the linear threshold instances with a larger size $|V|=120$  in \tabref{Table:LT_Rep}. We record the solution time for DEP-S \eqref{eq:DEP_LT} and the oracle phase  (referred to as Oracle), separately. We consider the number of scenarios with $|\Omega| \in \{100,500,1000\}$ and perform three replications of the sample set. For the oracle phase, as in the previous subsection, Inf records the number of infeasible solution detected by the oracle, and column Cuts records the number of inequalities \eqref{eq:LLcut_strong} added by the oracle phase. In our computational study, we observe that there are  instances that cannot reach a feasible solution during the oracle phase within the time limit. Hence, in Oracle, Time(u) records the solution time for those instances that completed this phase, and ``(u)" denotes the number of unsolved instances out of the Inf€ number of instances.  If Inf = 0, we put  ``-" in both Time(u) and Cuts. Since the exact method cannot provide the truly optimal solution within the time limit for the instances in \tabref{Table:LT_Rep}, we also provide the solution quality analysis in this part. For both DEP-S \eqref{eq:DEP_LT} and Oracle, we record the minimum and maximum objective function values  for the solution of each phase over the three replications, and the resulting estimated optimality gap.  Note that both DEP-S \eqref{eq:DEP_LT} and Oracle have the same objective value if an objective value provided by the DEP-S \eqref{eq:DEP_LT}  is feasible.

\revised{
	\begin{table}[htb]
		{ \caption{Solution analysis of DEP-S \eqref{eq:DEP_LT} for networks with $|V|=120$.}
			\label{Table:LT_Rep}
			\begin{center}
				\scalebox{0.75}{\begin{tabular}{ 			 |p{1cm}p{1cm}p{1cm}||p{1.5cm}p{1.5cm}p{1.5cm}||p{1.5cm}p{1.5cm}p{1.5cm}p{1.5cm}p{1.5cm}p{1.5cm}|}
						\hline
						
						& & &
						\multicolumn{3}{c||}{DEP-S \eqref{eq:DEP_LT}} &
						\multicolumn{6}{c|}{Oracle } \\
						
						\cline{4-12}%
						
						{ $\bar b$} & { $\epsilon$} & { $|\Omega|$}
						& {Time} & {Min} & {Max }  &  {Inf } &  {Time(u)} & {Cuts } & {Min }& { Max } & {EGap(\%)}
						\\
						\hline 		
						\multirow{9}{4em}{1}  & \multirow{3}{4em}{0.0125} & $100$ & $\le 1$& 9& 9 &0&-&- & 9  & 9 & 0\\	
						&  & 500 & $\le 1$ & 9& 9 &0&-&- & 9  & 9 & 0\\	
						&  & 1000 &3 & 9& 9&0&-&- & 9  & 9 &  0\\							
						& \multirow{3}{4em}{0.025} & 100 & $\le 1$ & 8& 9&1&$\le 1$&2 & 9  & 9 & $11.1^*$\\	
						&  & 500 & 2  & 9& 9&0&-&- & 9 & 9 & 0\\	
						&  & 1000 & 17  & 9& 9&0&-&- & 9  & 9 &  0\\							
						
						& \multirow{3}{4em}{0.05} & 100 & $\le 1$ & 8& 8&3&$\le 1$&7 & 9  & 9 & 11.1\\	
						&  & 500 & 25& 9& 9&0&-&- & 9 & 9 & 0\\	
						&  & 1000 & 163 & 9& 9&0&-&- & 9  & 9 &  0\\		 \hline

						\multirow{9}{4em}{100}  & \multirow{3}{4em}{0.0125} & $100$ & $\le 1$ & 391& 446&3&4(1)&16611 & 450  & 452 & $13.1^*$\\	
						&  & 500& 5 & 450& 450&0&-&- & 450  & 450 & 0\\	
						&  & 1000& 6 & 450& 450&0&-&- & 450  & 450 &  0\\							
						& \multirow{3}{4em}{0.025} & $100$& $\le 1$ & 376& 409&3&715(1)&24434 & 450  & 450 & $16.4^*$\\	
						&  & 500& 16 & 450& 450&0&-&- & 450 & 450 & 0\\	
						&  & 1000& 67 & 450& 450&0&-&- & 450  & 450 &  0\\							
						
						& \multirow{3}{4em}{0.05} & 100& $\le 1$ & 360& 369&3&14&1070 & 388  & 390 & 7.22\\	
						&  & 500& 7 & 374& 398&3&15&61 & 388 & 399 & 3.61\\	
						&  & 1000& 39 & 370& 404 &3&57&76 & 388  & 407 &  4.64\\					 					 				
						\hline%
				\end{tabular}}
		\end{center}}
	\end{table}	
}

From \tabref{Table:LT_Rep}, we see that although both Oracle and DEP \eqref{eq:PPSC-LTMIP} cannot solve any instance with $|V|=120$ within the time limit, DEP-S \eqref{eq:DEP_LT} can solve all instances well within a minute, on average. For a small number of scenarios $|\Omega|=100$, the solution time is within a second. However, a small number of scenarios may cause infeasible solutions. We observe that out of the six settings with $|\Omega|=100$, the oracle phase detects infeasibility in five settings (a total number of 13 instances under these settings is infeasible). In addition, for the setting $(\bar b, \epsilon)=(100,0.05)$, the oracle phase fixes the solutions of instances with a larger number of scenarios $|\Omega| \ge 500$. Considering the number of  cuts added and the solution time  in the oracle phase, we observe  that a small number of scenarios may also lead to a large number of oracle cuts, and few instances cannot even be solved during the oracle phase within the time limit (e.g., $(\bar b, \epsilon,|\Omega|) = (100,0.0125,100)$ and $(100,0.025,100)$).   Hence, a small sample size may be useful to solve the DEP faster, but more time may be spent to correct the resulting infeasible solutions.

Finally, we demonstrate the solution quality of the linear threshold instances with $|V|=120$. In \tabref{Table:LT_Rep}, a large value of EGap denotes that the objective value given by DEP-S \eqref{eq:DEP_LT} is far from the feasible objective value provided by the oracle. The observation shows that a small number of scenarios, in general, leads to a large estimated gap of EGap $> 10 \%$. In \tabref{Table:IC-table2}, we consider the condition that $|\Omega|\epsilon \ge 5$, and put ``*" on the EGap the settings that do not satisfy $|\Omega|\epsilon \ge 5$. The EGap without ``*" is valid with probability approximately 0.875 for the corresponding setting. For the instances with unit costs and $|\Omega|\epsilon \ge 5$, we have a zero gap, which means that DEP-S \eqref{eq:DEP_LT} provides a truly optimal solution with probability at least 0.875. Using $|\Omega| \ge 500$,  DEP-S \eqref{eq:DEP_LT} is also able to provide the truly optimal solution with confidence 0.875 for  most instances with $\bar b =100$.  In summary, for the linear threshold instances with $|\Omega| \ge 500$, DEP-S \eqref{eq:DEP_LT} not only solves the problem efficiently but also provides a high-quality solution with an estimated gap no larger than 5\%. 
}

\section{Conclusions and Future Work}\label{sec:con}

\revised{
In this paper, we propose a general delayed cut generation method to  solve  chance-constrained combinatorial optimization problems exactly (without sampling) when there is an efficient oracle to check whether a given solution satisfies the chance constraint. In addition, we show that the oracle can be used as a detector for checking the feasibility of the solution given by a sampling-based approach.  We demonstrate our proposed methods on a probabilistic partial set covering problem (PPSC) considered in the social networks literature, under certain probability distributions, one of which is finite but exponential (independent probability coverage) and the other is a continuous distribution (linear threshold). For the linear threshold formulation, we  give a compact MIP that linearly encodes the probability oracle within the optimization model. For  PPSC, we give strong valid inequalities for the deterministic equivalent formulation of the sample approximation problem and show that the proposed inequalities subsume the submodular inequalities that are valid for this problem.  In our computational study of the proposed methods, we observe that   the exact method is preferred for small networks. It provides provably optimal solutions with respect to the true distribution efficiently. However, we see that the 
sampling-based methods scale better when the size of the problem increases if they are able to exploit the problem structure. In particular, we show that using the proposed valid inequalities in a branch-and-bound framework enables the solution of problems with larger network sizes. While the optimal solution to the sample approximation problem may not even be feasible, our oracle-based method  can   check and correct the feasibility of the solution to obtain a high-quality feasible solution to the original problem. We note that our methods are generally applicable to other problems with the desired structure. For example, the probabilistic set covering problem with a circular distribution considered in \cite{Beraldi2002b,Luedtke2008} fits into our framework, although, in this case, we can also provide a compact MIP using the formulable structure of the probability oracle.

In this paper, we consider a class of CCPs with binary decision variables. A possible direction is to use the idea of oracles to solve other classes of CCPs exactly, such as those with continuous decision variables, in which case we are not able to use the no-good cuts. In addition, it will be useful to exploit the structure of the  problems to derive more effective feasibility cuts for the exact algorithm.    }


\ignore{

\section*{ Acknowledgments}

\appendix 

\section{The Decomposition Algorithm of \cite{Luedtke2014} Applied to DEP  \eqref{eq:sample-DEP} } \label{sec:app1}

In this section, we give the details of the decomposition algorithm of \cite{Luedtke2014} applied to DEP   \eqref{eq:sample-DEP} of PPSC. 
In this algorithm, observing that for a given $(x,z)$, the problem decomposes for each scenario, we solve a relaxed master problem (RMP) given by
\begin{subequations}\label{eq:sample-DEP-sub}
	\begin{align}
		\min~~& b ^\top x \\        
		& (x,z) \in \mathcal{C'} \label{eq:sample-DEP-sub-1}\\	
		& \sum_{\omega \in \Omega} p_\omega z_\omega \ge 1-\epsilon \label{eq:sample-DEP-sub-2}\\
		& x\in \mathbb{B}^{n},z\in \mathbb{B}^{|\Omega|},
	\end{align}
\end{subequations} 
where the set $\mathcal{C'}$ represents the set of feasibility cuts on the variables $(x,z)$ generated until the current iteration of the algorithm.  Given  an incumbent solution ($\bar x,\bar z$) to RMP \eqref{eq:sample-DEP-sub}, for each  $\omega \in \Omega$ we consider the second-stage problem 
\begin{subequations}\label{eq:sample-DEP-F}
	\begin{align}
		\min\quad&  0\\
		&  -y_i^{\omega} \ge -\sum_{j\in V_1} t_{ij}^{\omega} \bar x_j & \forall i \in V_2 \label{eq:sample-DEP-F1} \\
		& \sum_{i \in V_2} y_i^{\omega} \ge \tau \bar z_\omega \label{eq:sample-DEP-F2}\\
		& y^{\omega} \in \mathbb{B}^{m}, \label{eq:sample-DEP-F3}
	\end{align}
\end{subequations}
which is a feasibility problem. 
We observe that the coefficient matrix of the constraints \eqref{eq:sample-DEP-F1} and \eqref{eq:sample-DEP-F2} is totally unimodular. Thus, we can relax the integrality restrictions on $y_\omega$ to obtain a second-stage linear program for each $\omega \in \Omega$
\begin{subequations}\label{eq:sample-DEP-FR}
	\begin{align}
		\min\quad&  0\\
		& -y_i^{\omega} \ge -\sum_{j\in V_1}  t_{ij}^{\omega} \bar x_j & \forall i \in V_2 \label{eq:sample-DEP-FR1} \\
		& \sum_{i \in V_2} y_i^{\omega} \ge \tau \bar z_\omega \label{eq:sample-DEP-FR2}\\
		& -y_i^{\omega} \ge -1 & \forall i \in V_2 \label{eq:sample-DEP-FR3}\\
		& y^{\omega} \in \mathbb{R}_+^{m}. \label{eq:sample-DEP-FR4}
	\end{align}
\end{subequations} 
Let $\pi_i^1$ be the dual variable associated with inequality \eqref{eq:sample-DEP-FR1} for each $i \in V_2$. Let $\pi^2$ be the dual variable associated with inequality \eqref{eq:sample-DEP-FR2}. Let $\pi_i^3$ be the dual variable associated with inequality \eqref{eq:sample-DEP-FR3} for each $i \in V_2$. Given an incumbent solution ($\bar x,\bar z$) of RMP \eqref{eq:sample-DEP-sub}, the dual of the  second-stage subproblem is 
\begin{subequations}\label{eq:sample-DEP-sub1}
	\begin{align}
		\delta_\omega(\bar x,\bar z_\omega) =\max~~& -\sum_{i\in V_2} \pi_i^1 \sum_{j\in V_1}  t_{ij}^{\omega} \bar x_j + \pi^2 \tau \bar z_\omega - \sum_{i\in V_2} \pi_i^3 \label{eq:sample-DEP-sub1-1} \\
		& -\pi_i^1+\pi^2-\pi_i^3 \le 0 & \forall i \in V_2 \label{eq:sample-DEP-sub1-2} \\
		& \pi^1 \in \mathbb{R}_+^{m}, \pi^2 \in \mathbb{R}_+, \pi^3 \in \mathbb{R}_+^{m}. \label{eq:sample-DEP-sub1-3}
	\end{align}
\end{subequations}  
Note that if an incumbent solution ($\bar x,\bar z$) is such that $\bar z_\omega = 1$ and $\delta_\omega(\bar x,\bar z_\omega) = 0$ for some $\omega\in \Omega$ (i.e., $\bar z_\omega=1$ and the second-stage problem is feasible for the given $\bar x$), then we do not need to generate a feasibility cut for  this scenario. Therefore, we only consider the case that for some $\bar \omega \in \Omega$, the incumbent solution ($\bar x,\bar z$) has $\bar z_{\bar \omega} = 1$, but $\delta_\omega(\bar x,\bar z_{\bar \omega}) =\infty$. 
In other words, the RMP gives a solution $\bar z_{\bar \omega} = 1$ implying that there exists a feasible solution to the second-stage subproblem for $\bar \omega$, however, the second-stage problem is infeasible. Then we need to add a feasibility cut to cut off this infeasible solution. Because the dual of the second-stage problem is unbounded, 
we have a direction of unboundedness given by the extreme ray $\bar \pi_{\bar \omega} = (\bar \pi^1, \bar \pi^2, \bar \pi^3)$. To obtain a feasibility cut, we solve a secondary subproblem  for each $\omega \in \Omega$, given by  
\begin{subequations}\label{eq:sample-DEP-sub2}
	\begin{align}
		\varphi_\omega(\bar \pi_{\bar \omega}) = & \sum_{i\in V_2} \bar \pi_i^1 \sum_{j\in V_1}  t_{ij}^{\omega}  x_j \label{eq:sample-DEP-sub2-1}\\
		& \sum_{j\in V_1}  t_{ij}^{\omega}  x_j \ge y_i^{\omega} \label{eq:sample-DEP-sub2-2} & \forall i \in V_2\\
		& \sum_{i \in V_2} y_i^{\omega} \ge \tau \label{eq:sample-DEP-sub2-3}\\
		& x \in \mathbb{B}^{n},y^{\omega} \in \mathbb{B}^{m}. \label{eq:sample-DEP-sub2-4}
	\end{align}
\end{subequations}
Note that the secondary subproblem is a cardinality-constrained  deterministic set covering problem, which is NP-hard. After obtaining $\varphi_\omega(\bar \pi_{\bar \omega})$ for all $\omega \in \Omega$, we sort them in non-increasing order
\begin{subequations}
	\begin{align*}
		\varphi_{\varrho_1}(\bar \pi_{\bar \omega}) \ge \varphi_{\varrho_2}(\bar \pi_{\bar \omega}) \ge \dots \ge \varphi_{\varrho_i}(\bar \pi_{\bar \omega}) \ge \dots \ge \varphi_{\varrho_{|\Omega|}}(\bar \pi_{\bar \omega}),
	\end{align*}
\end{subequations}
where $\varrho$ is a permutation of $\Omega$. Let $Q=\{q_1,q_2,\dots,q_k\} \subseteq \{\varrho_1.\varrho_2,\dots,\varrho_{\ell}\}$, where $\ell = \lfloor {\epsilon|\Omega|} \rfloor$, $\varphi_{q_i}(\bar \pi_{\bar \omega}) \ge \varphi_{q_{i+1}}(\bar \pi_{\bar \omega})$ for $i = 1,\dots,k$, and $\varphi_{q_{k+1}}(\bar \pi_{\bar \omega}) = \varphi_{\varrho_{l+1}}(\bar \pi_{\bar \omega})$. \cite{Luedtke2014} shows that 
\begin{align}\label{eq:jim_cut}
	\sum_{i\in V_2} \bar \pi_i^1 \sum_{j\in V_1}  t_{ij}^{\omega}  x_j +\sum_{i=1}^k (\varphi_{q_{i}}(\bar \pi_{\bar \omega})-\varphi_{q_{i+1}}(\bar \pi_{\bar \omega}))(1-z_{q_{i}}) \ge \varphi_{q_1}(\bar \pi_{\bar \omega})
\end{align}
is a valid  feasibility cut  that cuts off the current infeasible solution. Given an incumbent solution $(\bar x, \bar z)$ of RMP \eqref{eq:sample-DEP-sub}, we generate inequality \eqref{eq:jim_cut} for each $\bar \omega \in \Omega$ such that  $\bar z_{\bar \omega} = 1$ and $\delta_\omega(\bar x,\bar z_{\bar \omega}) =\infty$, and add these inequalities to $\mathcal{C'}$ in RMP \eqref{eq:sample-DEP-sub}. This process is repeated until no such feasibility cut is needed and the incumbent solution is optimal. 

}

\section*{Acknowledgments}
We thank the three referees and the AE for their constructive comments that improved the paper. We also thank Baski Balasundaram for bringing \cite{Newman2003} to our attention.

\bibliographystyle{siam}
\bibliography{SIP-BIB}

\end{document}